\theoremstyle{plain} 
\newtheorem{theorem}{Theorem}[section]
\newtheorem{proposition}[theorem]{Proposition}
\newtheorem{lemma}[theorem]{Lemma}
\newtheorem{remark}[theorem]{Remark}
\renewcommand{\S}{\mathcal{T}}
\newcommand{\R}{\mathbb{R}}
\newcommand{\Z}{\mathbb{Z}}
\newcommand{\N}{\mathbb{N}}
\newcommand{\supp}{{\rm supp}}
\newcommand{\vol}{\text{vol}}
\newcommand{\Vol}{\text{Vol}}
\newcommand{\net}{\mathcal{N}}
\newcommand{\hyp}{\widetilde{M}}
\newcommand{\cutheat}{C}
\newcommand{\cutchi}{C'}
\renewcommand{\sf}{G(M)}
\title[Maximal multiplicity of Laplacian eigenvalues]
{Maximal multiplicity of Laplacian eigenvalues in negatively curved surfaces}
 \author[C. Letrouit]{Cyril Letrouit}
\address[C. Letrouit]{Laboratoire de Mathématiques d’Orsay, Université Paris-Saclay, Bâtiment 307, 91405 Orsay Cedex \& CNRS UMR 8628, France}
\email{cyril.letrouit@universite-paris-saclay.fr}
\author[S. Machado]{Simon Machado}
\address[S. Machado]{ETH Zürich, Department of Mathematics, Rämistrasse 101, 8092, Zürich, Switzerland}
\email{smachado@ethz.ch}
\begin{document}    
\date{\today}
\maketitle

\begin{abstract}
In this work, we obtain the first upper bound on the multiplicity of Laplacian eigenvalues for negatively curved surfaces which is sublinear in the genus $g$. Our proof relies on a trace argument for the heat kernel, and on the idea of leveraging an $r$-net in the surface to control this trace. This last idea was introduced in \cite[Theorem 2.2]{equiangular} for similar spectral purposes in the context of graphs of bounded degree. Our method is robust enough to also yield an upper bound on the ``approximate multiplicity" of eigenvalues, i.e., the number of eigenvalues in windows of size $1/\log^\beta(g)$, $\beta>0$. 
This work provides new insights on a conjecture by Colin de Verdière \cite{CdV86} and new ways to transfer spectral results from graphs to surfaces. 
\end{abstract}

\section{Introduction}\label{s:intro}
\subsection{Main results}\label{s:mainresults}

Let $M$ be a closed, connected Riemannian manifold, and let $\Delta$ denote the Laplace-Beltrami operator on $M$, simply called ``Laplacian" in the sequel, which is self-adjoint and non-positive. The operator $-\Delta$ has a discrete spectrum 
\begin{equation}\label{e:eigenvaluesM}
0=\lambda_1(M)< \lambda_2(M)\leq \ldots \rightarrow +\infty,
\end{equation}
where the $\lambda_i(M)$ are repeated according to their multiplicity.

Our first main result deals with the case where $M$ is a closed negatively curved surface. We denote by $\S$ the set of triples 
$$
\S=\{(a,b,\rho)\in\R^3 \mid b\leq a<0, \ \rho>0\}.
$$
For any $(a,b,\rho)\in\S$, let $\mathcal{M}_{g}^{(a,b,\rho)}$ be the set of closed connected  surfaces of genus $g$, with injectivity radius $\geq \rho$, and with Gaussian curvature $c(x)$ satisfying $b\leq c(x)\leq a$ for any $x\in M$. An important example is obtained by taking $(a,b,\rho)=(-1,-1,\rho)$, in which case $\mathcal{M}_g^{(a,b,\rho)}$ is the set of hyperbolic surfaces (i.e., with constant curvature $-1$) of injectivity radius $\geq\rho$.

We obtain a general sublinear upper bound on the maximal multiplicity of $\lambda_2(M)$ for negatively curved surfaces. Our first main result is the following:
\begin{theorem}[Maximal multiplicity of $\lambda_2$]\label{t:sublinearA}
For any $(a,b,\rho)\in\S$, there exists $C_{0}>0$ such that for any $g\geq 2$ and any $M\in \mathcal{M}_{g}^{(a,b,\rho)}$, the multiplicity of $\lambda_2(M)$ is at most  $C_0\frac{g}{\log\log (1+g)}$. 

For any $(a,b,\rho)\in\S$ and $\delta>0$, there exist $C_1$, $\alpha>0$ such that for any $g\geq 2$ and any $M\in \mathcal{M}_g^{(a,b,\rho)}$ with spectral gap $\lambda_2(M)\geq \delta$, the multiplicity of $\lambda_2(M)$ is at most $C_1 \frac{g}{\log^\alpha g}$.
\end{theorem}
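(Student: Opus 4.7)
The plan is to bound the multiplicity $m$ of $\lambda_2(M)$ by a heat trace argument. Isolating the zero mode gives the key inequality
\[
m\,e^{-\lambda_2 t} \le \text{Tr}\bigl(e^{t\Delta}\bigr) - 1 \qquad (t>0).
\]
The pointwise estimate $p_t(x,x)\le C/t$ alone gives only $\text{Tr}(e^{t\Delta})\lesssim\vol(M)/t\lesssim g/t$ and, after optimising in $t$, the trivial bound $m\lesssim g$. All the work then consists of improving this upper bound by using the geometry of $M$ at scale $\rho$.

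I would first fix an $r$-net $\net=\{x_1,\dots,x_N\}$ of $M$ with $r$ a small multiple of $\rho$, so that each ball $B(x_i,r)$ embeds isometrically into the universal cover $\hyp$. Volume comparison under the pinching $b\le c(x)\le a$ gives $N\le C_{a,b,\rho}\,g$ and produces Voronoi cells $V_i$ of uniformly bounded diameter. Since the curvature is pinched, as long as $\sqrt{jt}$ stays below the injectivity radius the heat kernel $p_{jt}(x,y)$ on $M$ is well approximated by the heat kernel on the simply connected space of constant curvature, giving sharp pointwise Gaussian-type bounds valid on each cell.

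The crucial move, inspired by \cite[Theorem~2.2]{equiangular}, is to replace $e^{t\Delta}$ by a polynomial $P_k(e^{t\Delta})$ well adapted to the eigenvalue $e^{-\lambda_2 t}$. Choosing $P_k$ with $P_k(e^{-\lambda_2 t})\ge 1$ while $P_k$ is small on the rest of the spectrum yields
\[
m \le \text{Tr}\bigl(P_k(e^{t\Delta})\bigr) = \sum_{j=0}^{k} a_j\,\text{Tr}\bigl(e^{jt\Delta}\bigr),
\]
and each term can be bounded via the net: as long as $jt\lesssim \rho^2$, the non-identity contributions in the expansion $p_{jt}(x,x)=\sum_{\gamma\in\pi_1(M)} p_{jt}^{\hyp}(\tilde x,\gamma\tilde x)$ are negligible, leaving only the ``tree-like'' main term $\vol(M)\,p^{\hyp}_{jt}(o,o)\lesssim g/(jt)$. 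Optimising in the parameters $(k,t,r)$ should produce the $g/\log\log(1+g)$ estimate; under the spectral-gap hypothesis $\lambda_2\ge\delta$, the polynomial $P_k$ can be chosen considerably more concentrated around $e^{-\lambda_2 t}$ (a genuine spectral window being available below $\lambda_2$), which upgrades the gain to the polynomial loss $\log^\alpha g$.

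The main obstacle I anticipate is the amplification step itself. The polynomial $P_k$ must be small on the whole of the remaining spectrum (which is unbounded above), while its coefficients $a_j$ must not cancel out the gain coming from the heat-trace bounds. The $r$-net is essential precisely because the individual bound on $\text{Tr}(e^{jt\Delta})$ only remains sharp as long as $jt\lesssim\rho^2$; once $jt$ becomes larger, closed geodesics start to contribute and the ``tree'' picture breaks down. Calibrating $k$, $t$ and $r$ so as to land exactly on the claimed $\log\log g$ (respectively $\log^\alpha g$) gain is where I expect most of the care to be needed.
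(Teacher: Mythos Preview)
Your proposal has a genuine gap at the amplification step, and it is not a matter of calibration. You restrict the heat propagation to times $jt\lesssim\rho^2$ precisely so that the non-identity terms in $K_{jt}(x,x)=\sum_{\gamma}k_{jt}^{\hyp}(\tilde x,\gamma\tilde x)$ are negligible. But those non-identity terms are the \emph{only} place where the spectrum of $M$ (as opposed to the volume) enters the heat trace. Once you drop them, $\text{Tr}(e^{jt\Delta})$ is, up to exponentially small errors, equal to $\vol(M)\cdot k_{jt}^{\hyp}(o,o)$, a quantity depending only on $\vol(M)$, $jt$, and the curvature bounds. No polynomial combination $\sum_j a_j\,\text{Tr}(e^{jt\Delta})$ built from these numbers can detect whether $\lambda_2$ has multiplicity $1$ or $g$: you are effectively doing spectral analysis on the universal cover, which has no $L^2$-eigenvalues. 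Concretely, if $P_k\ge 0$ on $[0,1]$ and $P_k(e^{-\lambda_2 t})\ge 1$, then Weyl's law forces $\text{Tr}(P_k(e^{t\Delta}))\gtrsim \frac{\vol(M)}{t}\int_0^1 P_k(u)\,\frac{du}{u}$; making the integral $o(1)$ requires $P_k$ to be sharply concentrated, hence of large degree $k$, which violates $kt\lesssim\rho^2$.

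The paper works at the opposite end of the time scale: $r_1=c\log\log\vol(M)$ and total time $\sim r_2=c\log\vol(M)$, well beyond the injectivity radius, where closed geodesics \emph{do} contribute. The $r_1$-net is not used to localise the heat kernel into tree-like pieces; it is used (with $r_1\to\infty$) to build a finite-rank projection $P$ orthogonal to the characteristic functions of Voronoi cells around net points. Cauchy interlacing gives $m\le m'+\text{rank}(\text{Id}-P)$ with $\text{rank}(\text{Id}-P)\lesssim \vol(M)/r_1\sim g/\log\log g$, and the real work is a ``gain'' estimate showing $\|P\chi_x e^{r_1\Delta}\chi_x P\varphi\|\le(1-\varepsilon)\|e^{r_1\Delta}|\varphi|\|$ with quantitative $\varepsilon$, obtained by playing the interaction $\langle e^{r_1\Delta}\varphi_+,e^{r_1\Delta}\varphi_-\rangle$ against the averages $\langle e^{r_1\Delta}\varphi,\psi_k\rangle$ on the net cells. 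The spectral-gap improvement comes not from a better polynomial but from a smaller net (Lemma~\ref{l:spectralgap}).
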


The dependence of $C_0, C_1, \alpha$ on $a,b,\rho, \delta$ is explicit. For instance, $C_0=C_0' \frac{|b| +\rho^{-2}}{|a|}$ with $C_0' > 0 $ universal, see Remark \ref{r:depC0}. Our strategy of proof partly relies on a geometric idea which takes its source in  \cite{equiangular}. This last work proves the same sublinear bound as ours, for the adjacency matrix of combinatorial graphs with a uniform bound on the degree.

Our next statement is stronger than Theorem \ref{t:sublinearA}, in the sense that it accommodates for ``approximate multiplicity" in a window of size $O(1/\log^\beta(g))$, $\beta>0$ (see Remark \ref{r:tracemethod} for comments on the size of this window). This result parallels a similar statement \cite[Theorem 1.6]{haiman} for graphs with a uniform bound on the degree of each vertex.
\begin{theorem}[Maximal approximate multiplicity of $\lambda_j$]\label{t:extension}
For any $j\in\N_{\geq 2}$, any $(a,b,\rho)\in\S$, and any $\beta,K>0$, there exists $C_0>0$ and $g_0\in\mathbb{N}_{\geq 2}$ such that the number of eigenvalues in $[\lambda_j(M),(1+\frac{K}{\log^\beta g})\lambda_j(M)]$ is at most  $C_0\frac{g}{\log \log g}$ for any $g\geq g_0$ and any $M\in \mathcal{M}_g^{(a,b,\rho)}$. 
\end{theorem}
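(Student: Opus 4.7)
The plan is to adapt the heat-trace $r$-net argument of Theorem~\ref{t:sublinearA} to the approximate-multiplicity setting, exploiting a scale separation: the window $J_g := [\lambda_j(M),(1+K/\log^\beta g)\lambda_j(M)]$ has relative width $K/\log^\beta g$, which is much smaller than $1/\log\log g$, the scale to which the heat semigroup at time $t\asymp\log\log g$ (the scale implicit in Theorem~\ref{t:sublinearA}) is sensitive. Hence $e^{t\Delta}$ cannot distinguish eigenvalues within $J_g$: each such eigenvalue contributes essentially the same weight $e^{-t\lambda_j(M)}$ to the heat trace, and the argument of Theorem~\ref{t:sublinearA} can be applied uniformly over $J_g$.

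The main steps are as follows. First, I would establish a uniform upper bound $\lambda_j(M)\leq \Lambda_j$ for some $\Lambda_j = \Lambda_j(j,a,b,\rho)$: a Buser-type min-max construction using $j$ disjoint test functions supported in balls of radius $\gtrsim \rho$ suffices, using that the curvature is pinched between $b$ and $a<0$. Second, for eigenvalues $\lambda_i\in J_g$,
\[
  e^{-t\lambda_i}\;\geq\; e^{-t\lambda_j(M)(1+K/\log^\beta g)}\;\geq\;(1-o(1))\,e^{-t\lambda_j(M)},
\]
since $t\lambda_j(M) K/\log^\beta g \leq \Lambda_j K\,\log\log g/\log^\beta g = o(1)$ as $g\to\infty$. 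Summing,
\[
  \sum_{\lambda_i\in J_g} e^{-t\lambda_i}\;\geq\;(1-o(1))\,N_J\,e^{-t\lambda_j(M)},
\]
where $N_J$ is the number of eigenvalues in $J_g$ counted with multiplicity. Third, the $r$-net-based estimate from the proof of Theorem~\ref{t:sublinearA} yields a bound of the form $e^{t\lambda}\sum_{\lambda_i=\lambda}e^{-t\lambda_i}\leq C g/\log\log g$ for any Laplace eigenvalue $\lambda$; the same $r$-net argument, applied to the entire window rather than a single eigenvalue, should give $e^{t\lambda_j(M)}\sum_{\lambda_i\in J_g}e^{-t\lambda_i}\leq C g/\log\log g$. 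Combining with the previous display yields $N_J \leq C_0\, g/\log\log g$, which is the desired bound.

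The main technical obstacle is verifying that the $r$-net trace bound from Theorem~\ref{t:sublinearA} remains valid---with the same constants, up to a $1+o(1)$ factor---when applied to the approximate eigenspace associated to $J_g$ rather than to a single eigenspace. The fact that eigenfunctions with eigenvalues in $J_g$ are heat-kernel-indistinguishable from exact $\lambda_j(M)$-eigenfunctions at time $t\asymp\log\log g$ is what should allow the same rigidity argument to go through, but one has to confirm that the heat time $t$ used in Theorem~\ref{t:sublinearA} does not depend on $\lambda_2(M)$ in a way that precludes using the same $t$ for $\lambda_j(M)$, and that the constants correctly absorb the dependence on $\Lambda_j$.
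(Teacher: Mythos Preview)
Your overall strategy is the same as the paper's, but you have misidentified the relevant heat time. In the proof of Theorem~\ref{t:sublinearA} one bounds $\text{Tr}\bigl((Pe^{r_1\Delta}P)^{2n}\bigr)$ with $n=\lfloor r_2/r_1\rfloor+1$, so eigenvalues enter as $e^{-2nr_1\lambda}$ with total time $2nr_1\approx 2r_2=2c\log g$, \emph{not} $\log\log g$. With the correct time $t\asymp\log g$, your key computation becomes
\[
t\,\lambda_j(M)\,\frac{K}{\log^\beta g}\;\lesssim\;\Lambda_j K\,\log^{1-\beta} g,
\]
which is $o(1)$ only when $\beta>1$. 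For $0<\beta\leq 1$ the factor $e^{-t\lambda_i}/e^{-t\lambda_j(M)}$ is not $1-o(1)$; it is of size $\exp(-c_0\log^{1-\beta}g)$, which diverges exponentially from $1$. Your ``scale separation'' heuristic therefore breaks down precisely in the range where the theorem is most interesting (cf.\ Remark~\ref{r:tracemethod}).

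The fix, which is what the paper does, is not to make the window loss negligible but to ensure that the \emph{gain} from the $r$-net argument dominates it. The gain after $2n$ iterations is of order $\exp\bigl(-(\log g)^{1-hc}/\log\log g\bigr)$, where $h$ is a curvature-dependent constant and $c$ is the parameter in $r_1=c\log\log g$. The loss from the window is $\exp\bigl(O(\log^{1-\beta} g)\bigr)$. One must therefore choose $c$ small enough, \emph{depending on $\beta$}, so that $hc<\beta$; the paper replaces the constraint \eqref{e:choicec} by $\beta/4\geq hc$ (up to constants). With this choice the gain exponent $(\log g)^{1-hc}$ beats the loss exponent $(\log g)^{1-\beta}$, and the rest of the argument proceeds as for Theorem~\ref{t:sublinearA}, with Lemma~\ref{l:defofS} adapted to $\mu_j$ in place of $\mu_2$ (your uniform bound $\lambda_j\leq\Lambda_j$ is indeed needed here, via Lemma~\ref{l:encadrement}).
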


Following an analogous result on regular graphs \cite[Proposition 5.3]{mckenzie} and using a graphs to surfaces transfer principle due to Colin de Verdière and Colbois \cite{ColboisCdV}, we also provide a
construction of closed hyperbolic surfaces with high approximate multiplicity. This result shows
that if the injectivity radius is allowed to tend to $0$, no bound like the one in Theorem \ref{t:extension} can hold. 

\begin{proposition}\label{p:construction}
For any sequence of positive numbers $(\varepsilon_g)_{g\in\mathbb{N}_{\geq 2}}$, there exists a family of connected closed hyperbolic surfaces $(M_g)_{g\in\mathbb{N}_{\geq 2}}$, where $M_g$ has genus $g$,  with at least $g-1$ eigenvalues in $[\lambda_2(M_g),(1+\varepsilon_g)\lambda_2(M_g)]$.
\end{proposition}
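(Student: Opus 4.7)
The plan is to combine a clustering result for the spectrum of regular graphs with the graph-to-surface transfer principle already announced in the surrounding text.

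First I would apply \cite[Proposition~5.3]{mckenzie}: for any sequence $\eta_n\downarrow 0$, there exists a sequence of connected $3$-regular graphs $G_n$ on $n$ vertices (with $n$ even) such that at least $n-O(1)$ eigenvalues of the combinatorial Laplacian of $G_n$ lie in the window $[\lambda_2(G_n),(1+\eta_n)\lambda_2(G_n)]$. A trivalent graph on $n$ vertices has $3n/2$ edges and canonically encodes a pants decomposition consisting of $n$ pairs of pants glued along $3n/2$ cuffs, giving a closed orientable surface of Euler characteristic $-n$, i.e.\ genus $g=1+n/2$. Taking $n=2(g-1)$ thus matches the target genus, and the cluster size on the graph side is $2(g-1)-O(1)\geq g-1$ for $g$ large enough.

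Next I would feed $G_n$ into the Colbois--Colin de Verdière transfer \cite{ColboisCdV}: realising $M_g$ as a hyperbolic surface with the above pants decomposition and all cuffs of a common small length $\ell$, the small eigenvalues of $M_g$ are approximated by those of the combinatorial Laplacian of $G_n$ in the sense of Schoen--Wolpert--Yau, with error controlled by $\ell$. By choosing $\ell$ and $\eta_n$ small enough (as functions of $\varepsilon_g$), the first $n$ Laplace eigenvalues of $M_g$ differ from those of $G_n$ by a relative factor at most $\varepsilon_g/2$, say, which pushes the entire graph cluster into $[\lambda_2(M_g),(1+\varepsilon_g)\lambda_2(M_g)]$ and yields at least $g-1$ surface eigenvalues there. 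The finitely many small values of $g$ for which $2(g-1)-O(1)<g-1$ are handled by ad hoc hyperbolic surfaces (for $g=2$ the bound $g-1=1$ is automatic since $\lambda_2(M_g)$ itself lies in the window).

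The hard part will be turning the qualitative convergence of \cite{ColboisCdV} into a \emph{quantitative, clustering-preserving} statement: for each tolerance $\varepsilon>0$, one must be able to choose $\ell$ so that the first $n$ eigenvalues of $M_g$ simultaneously agree with those of $G_n$ up to a multiplicative factor $1+\varepsilon$. This requires extracting an explicit dependence of the error on $\ell$ from the Schoen--Wolpert--Yau pinching theorem, or equivalently upgrading the qualitative spectral convergence by a compactness argument applied only to the first $n$ eigenvalues. A minor secondary point is to verify that the construction delivers a genuinely hyperbolic metric (not merely negatively curved), which is automatic in the pants-decomposition implementation since gluing hyperbolic pairs of pants along cuffs is itself a hyperbolic operation.
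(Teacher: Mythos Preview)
Your approach is valid in outline but takes a more circuitous route than the paper, and the difficulty you flag is not where the actual work lies.

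The paper does not invoke \cite{mckenzie} at all for the construction. Instead it writes down an explicit graph whose Laplacian has an eigenvalue of \emph{exact} multiplicity $n-1$: the star $F_n$ on $n$ branches, with a loop added at each leaf so that all degrees are $\geq 3$. This graph $G_n$ has $n+1$ vertices, $2n$ edges, and the Laplacian associated with the Colbois--Colin de Verdi\`ere quadratic form has eigenvalues $0$, $1$ (multiplicity $n-1$), and $\frac{2n-2}{n-2}$. Plugging into the transfer lemma gives a hyperbolic surface of genus $|E|-|V|+1=n$ with $n-1$ eigenvalues all asymptotic to $\varepsilon\cdot 1$, hence clustered as tightly as desired. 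This is both more elementary (no black box from \cite{mckenzie}) and cleaner (exact multiplicity on the graph side means no need to track two independent error parameters $\eta_n$ and $\ell$).

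Your identified ``hard part'' is a non-issue: the Colbois--Colin de Verdi\`ere result, as extracted in the paper's Lemma~\ref{l:cdvcolbois}, already asserts the quantitative asymptotic $\lambda_j(\varepsilon)=\varepsilon\zeta_j+O(\varepsilon^2)$ for each fixed graph. Since $n$ is fixed when you choose $\varepsilon$, this immediately preserves clustering; no compactness argument or explicit Schoen--Wolpert--Yau dependence is needed. What your approach does buy is that $3$-regular graphs correspond directly to pants decompositions, which is conceptually pleasant; but the paper's mixed-degree graph (central vertex of degree $n$) works just as well in the Colbois--Colin de Verdi\`ere framework and gives the genus count $g=n$ on the nose, with no ad hoc treatment of small $g$ needed beyond the trivial case $g=2$.
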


\subsection{A general volume-dependent bound}
Theorems \ref{t:sublinearA} and \ref{t:extension} are deduced from more general statements, which bound the (approximate) multiplicity in terms of three geometric quantities: the volume, the injectivity radius, and a lower bound on the Gaussian curvature.  For instance:
\begin{theorem}\label{t:volumebound} For any $\rho>0$ and $b<0$ there exists $C_0>0$ such that for any closed, connected Riemannian surface $M$ with injectivity radius ${\rm inj}(M)\geq \rho$ and Gaussian curvature $\geq b$, the multiplicity of $\lambda_2(M)$ is at most $C_0(1+\frac{{\rm vol}(M)}{\log\log (3+{\rm vol}(M))})$.
\end{theorem}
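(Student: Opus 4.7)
The plan is to combine a heat-kernel trace argument with the $r$-net localization idea introduced in \cite[Theorem 2.2]{equiangular}. Let $\lambda := \lambda_2(M)$ and let $m$ denote its multiplicity. My starting point is the classical trace inequality
\[
m\, e^{-t\lambda} \;\leq\; \mathrm{Tr}(e^{t\Delta}) \;=\; \int_M p_t(x,x)\, dx, \qquad t>0.
\]
The Li--Yau on-diagonal upper bound $p_t(x,x) \leq C(b)/t$, whose constants depend only on the lower Gaussian curvature bound $b$, already produces the naive linear bound $m \lesssim {\rm vol}(M)$. The entire task is to save an extra factor $\log\log {\rm vol}(M)$.

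To that end I would take a maximal $r$-separated set $\mathcal{N} = \{x_1, \dots, x_N\} \subset M$ with $r$ of order $\rho$. Since ${\rm inj}(M) \geq \rho$ makes the balls $B(x_i, r/2)$ embedded, and Bishop--Gromov comparison (which uses only the lower curvature bound $b$) controls their volumes in terms of $r$, one obtains $N \lesssim {\rm vol}(M)/r^2$. Following the strategy of \cite[Theorem 2.2]{equiangular}, I would then replace the heat operator by a polynomial $q(e^{t\Delta})$ of degree $k$ chosen so that $q(e^{-t\lambda})$ is close to $1$ while $|q|$ is uniformly small on the remainder of the heat spectrum---a Chebyshev-type construction. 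The kernel of $q(e^{t\Delta})$ then inherits the Gaussian decay of the heat kernel and is effectively supported within distance $O(\sqrt{kt})$ of the diagonal. The refined trace inequality
\[
m \;\leq\; \|q(e^{t\Delta})\|_{\mathrm{HS}}^{2} / q(e^{-t\lambda})^2,
\]
combined with decomposing the Hilbert--Schmidt integral along the covering of $M$ by balls centered at the net points, yields an estimate of the form $m \lesssim N \cdot (\text{local contribution})$.

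The final step would be to balance the parameters $r$, $t$ and $k$. The geometric constraint that balls of radius $\sqrt{kt}$ remain comparable to those of radius $r$, together with the spectral separation furnished by the Chebyshev choice, forces the optimum $k \sim \log\log {\rm vol}(M)$, producing the bound $m \leq C_0(1 + {\rm vol}(M)/\log\log(3 + {\rm vol}(M)))$ with $C_0 = C_0(\rho, b)$.

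I expect the main obstacle to be the transplantation of \cite[Theorem 2.2]{equiangular}, originally formulated for polynomials of the adjacency matrix of a bounded-degree graph (for which the kernel is strictly supported within graph-distance $k$), to the continuous setting of polynomials of the heat semigroup, whose kernel is only Gaussian-localized. Absorbing the Gaussian tails without sacrificing the $\log\log$ gain, and keeping the dependence on $b$ and $\rho$ explicit throughout, are the two technical points that I anticipate will demand the most care.
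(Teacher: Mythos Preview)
Your proposal has a structural gap that prevents it from working as stated. The inequality $m \leq \|q(e^{t\Delta})\|_{\mathrm{HS}}^{2}/q(e^{-t\lambda})^{2}$ is tautological: the eigenvalue $e^{-t\lambda}$ itself occurs with multiplicity $m$ in the spectrum of $e^{t\Delta}$, so the Hilbert--Schmidt norm already contains the term $m\,q(e^{-t\lambda})^{2}$ and the inequality collapses to $m \leq m + (\text{nonnegative})$. No Chebyshev filter can separate $\lambda_2$ from itself, and since there is no a priori gap between $\lambda_2$ and the next distinct eigenvalue, there is nothing for a polynomial of $e^{t\Delta}$ alone to exploit. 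Relatedly, your $r$-net is taken at the fixed scale $r\sim\rho$, giving $N\asymp {\rm vol}(M)$; nowhere in your outline does a factor $\log\log{\rm vol}(M)$ actually appear.

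The paper's route supplies the missing mechanism. One first passes, via Cauchy interlacing (Theorem~\ref{t:cauchy}), from $e^{r_1\Delta}$ to the compressed operator $Pe^{r_1\Delta}P$, where $P$ projects off the indicators of Voronoi cells centred on an $r_1$-net with $r_1\sim\log\log{\rm vol}(M)$; the rank of $\mathrm{Id}-P$ is then $\lesssim {\rm vol}(M)/r_1$, which is where the $\log\log$ saving enters. The trace-of-powers argument is applied to $Pe^{r_1\Delta}P$, not to $e^{t\Delta}$, and now has content: the constraint $P\varphi_x=\varphi_x$ forces the local top eigenfunction to have zero mean on each net cell, and an analysis of how heat flow mixes the positive and negative parts of $\varphi_x$ (Lemmas~\ref{e:lesmk}--\ref{l:gain}, replacing the Perron--Frobenius step in \cite{equiangular}) yields a quantitative drop $\|P\chi_x e^{r_1\Delta}\chi_xP\varphi_x\|^{2}\leq (1-\varepsilon)\|e^{r_1\Delta}|\varphi_x|\|^{2}$ with $\varepsilon$ of controlled size. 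Iterating this drop $\lfloor r_2/r_1\rfloor$ times with $r_2\sim\log{\rm vol}(M)$ is what beats the naive bound. Your intuition about Gaussian tails versus strict support is correct and is handled by the cutoffs $\chi_x$ (Lemma~\ref{l:simoooon}), but that is a secondary technicality compared to the missing interlacing/projection step.
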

If a negative upper bound on the Gaussian curvature is imposed in addition, then together with the Gauss-Bonnet theorem this implies Theorem \ref{t:sublinearA}.
In complement to Theorem \ref{t:volumebound}, we refer the reader to Theorem \ref{t:volboundspecgap}, which is a version of Theorem \ref{t:volumebound} with spectral gap, to Theorem \ref{t:volumebound2}, which accommodates for approximate multiplicity and works for any $\lambda_j$, and to Theorem \ref{t:sublinearA2}, which is a scale-free version of Theorem \ref{t:volumebound}.

\subsection{Bibliographical comments}\label{s:biblio}
The maximal multiplicity of Laplacian eigenvalues has been studied at least since the 1970's and a seminal paper of Cheng \cite{cheng}. We review the literature, mostly focusing on the case of surfaces (see however the last paragraph of this section on higher dimension). For $M$ a closed surface of genus $g$, let $m_i(M)$ denote the maximal multiplicity of the $i$-th eigenvalue of a Riemannian Laplacian on $M$ (with the convention \eqref{e:eigenvaluesM} on indexing of eigenvalues).

\medskip

\noindent\textbf{Linear bounds.} Cheng proved in \cite{cheng}  that $m_i(M)\leq \frac12 (2g+i)(2g+i+1)$. This result has been improved by Besson in \cite{besson} who sharpened the bound down to $4g+2i-1$. Both papers proceed by bounding the order of vanishing of eigenfunctions and obtaining a contradiction if an eigenspace is too large (see also \cite[Section III.6]{schoen}). Then, Sévennec \cite{sevennec} proved that in negative Euler characteristic, $m_2(M)\leq 5-\chi(M)$; in particular, if $M$ is orientable of genus $g\geq 2$, then $m_2(M) \leq 2g + 3$. This bound has been improved to $2g-1$ for closed hyperbolic surfaces of sufficiently high genus in \cite[Theorem 9.5]{fortier2}.

\medskip

\noindent\textbf{Sublinear bounds.} Some sublinear bounds on the multiplicity of eigenvalues on surfaces are already available in the literature. However these bounds work only under two strong assumptions: first, they hold for \emph{hyperbolic} surfaces only and second, they require some control (i) either over the number of closed geodesics of length $\leq c\log(g)$ for $c>0$ a small constant,  (ii) or, for any $L>0$, over the number of closed geodesics of length $\leq L$ as $g\rightarrow +\infty$. This control is related to the notion of \emph{Benjamini-Schramm convergence} (see \cite{samourais}), satisfied in particular with high probability by hyperbolic surfaces drawn with respect to the Weil-Petersson probability measure.

The assumption of hyperbolicity allows to use tools which are not available for general negatively curved surfaces: for instance, the Selberg trace formula which relates, in closed hyperbolic surfaces, the spectrum of the Laplacian to the set of lengths of closed geodesics. This formula has been used in \cite[Theorem 1.6]{laura} to derive a sublinear bound on the multiplicity of Laplacian eigenvalues for random hyperbolic surfaces (see \cite[Corollary 1.7]{lemasson} for a different but related sublinear bound using the Selberg transform). Relying also on the Selberg trace formula, \cite[Proposition 9.3]{fortier2} gives a sublinear bound when $\lambda_2(M)-\frac14$ is of order $1/\log(g)^2$. Finally, we mention \cite{samourais} which proves sublinear bounds for the related problem of limit multiplicities under Benjamini--Schramm convergence, and the papers \cite{degeorge}, \cite{sarnakxue} and \cite{gamburd} which give precise rates of convergence - with power saving - under more restrictive arithmetic assumptions.

Our method, which works for any surface and does not assume a control over the number of closed geodesics of length $\leq L$ for large $L$, is totally different. It relies mainly on an ingredient inspired from the work \cite{equiangular} pertaining to multiplicities in graphs (see Section \ref{s:strategy}), and on heat kernel estimates, which correspond geometrically to random walks and not to closed geodesics.  

We mention the fact that our results yield a sublinear bound on multiplicity when restricting to the set of Riemannian covers of a fixed negatively curved surface. Also, when $a=b=-1$ and $\rho$ is small, we see that the set of hyperbolic surfaces considered in Theorem \ref{t:sublinearA} covers most of the moduli space of closed hyperbolic surfaces of genus $g\geq 2$ since the event of having injectivity radius $\geq \rho$ has probability roughly $1-\rho^2$ for the Weil-Petersson probability measure (see \cite[Theorem 4.1]{mirzakhanipetri}).

\medskip

\noindent\textbf{Colin de Verdière's conjecture.} Colin de Verdière conjectures in  \cite[Section V]{CdV86} a much stronger bound of order $\sqrt{g}$ for the maximal multiplicity. More precisely, he conjectures that
\begin{equation}\label{e:conjcdv}
m_2(M) = \text{chr}(M)-1
\end{equation}
where $\text{chr}(M)$ is the chromatic number of $M$, defined as the largest $n$ such that the complete graph on $n$ vertices embeds in $M$. By a result of Ringel
and Youngs \cite{ringel},
$$
\text{chr}(M)=\Bigl\lfloor \frac12\left(7+\sqrt{49-24\chi(M)}\right)\Bigr\rfloor,
$$
(if $M$ is not the Klein bottle, for which $\text{chr}(M)=6$), and since $\chi(M)=2-2g$ for closed orientable surfaces, $m_2(g)$ would be of order $\sqrt{12g}$.

Although a clear improvement over the linear bound of Cheng and Besson, our sublinear bound $m_2(M)\lesssim g/\log\log(g)$ is far from the conjectured $\sqrt{12g}$. However, since the first version of this paper appeared, the Colin de Verdière conjecture has been disproven for genus $g=10$ and $g=17$ \cite{counter}. It is unclear whether these are isolated cases or indicative of a much more general failure of the conjecture. Precisely, is the right order of magnitude for $m_2(M)$ closer to $\sqrt{g}$ or $g/\log\log g$?

This conjecture was mostly supported by two \emph{lower bounds}: first, Colbois and Colin de Verdière constructed in \cite{ColboisCdV} for any $g\geq 3$ a closed hyperbolic surface of genus $g$ such that the multiplicity of $\lambda_2$ is $\Bigl\lfloor \frac{1+\sqrt{8g+1}}{2}\Bigr\rfloor$, which has the same order of growth as the conjectured upper bound \eqref{e:conjcdv}. Secondly, it is proved in \cite[Théorème 1.5]{CdVENS} that $\overline{m}_2(M)\geq \text{chr}(M)-1$ for arbitrary $M$, if $\overline{m}_2(M)$ denotes the maximal multiplicity of $\lambda_2$, where the maximum is taken over all Schrödinger operator on $M$ for which $\lambda_1=0$. 

The Colin de Verdière conjecture was also supported by the fact that \eqref{e:conjcdv} is satisfied for simple choices of $M$: the sphere \cite{cheng}, the torus \cite{besson}, the projective plane \cite{besson}, the Klein bottle \cite{CdVENS}, \cite{nadirashvili}. The work \cite{fortier1} shows that the Klein quartic maximizes the multiplicity of $\lambda_2$ among all closed hyperbolic surfaces of genus 3, with multiplicity equal to 8, which also matches the equality \eqref{e:conjcdv}. The proof is based on the Selberg trace formula.

 Proving \emph{upper bounds} on the multiplicity seems much more challenging. Proposition \ref{p:construction} highlights some of the challenges. For instance, extending Theorem \ref{t:extension} to surfaces with small injectivity radius would require a proof which separates between eigenvalues very close to one another, since there exist hyperbolic surfaces $M$ of (large) genus $g$ with ``approximate multiplicity" of $\lambda_2(M)$ of order $g$.

\medskip

\noindent\textbf{Literature on graphs.}
As already mentioned in Section \ref{s:mainresults}, our inspiration comes from the following result proved in \cite[Theorem 2.2]{equiangular}:
\begin{theorem}[\cite{equiangular}]\label{t:equiangular}
For every $j$ and every $d$, there is a constant $C=C(d,j)$ so that the adjacency matrix of every connected $n$-vertex graph with maximum degree at most $d$ has $j$-th eigenvalue multiplicity at most $Cn/\log\log n$.
\end{theorem}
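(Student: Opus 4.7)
The plan is to compare two expressions for $\text{tr}(P(A)^2)$, where $A$ is the adjacency matrix of $G$ (satisfying $\|A\|_{op}\leq d$) and $P$ is a low-degree polynomial isolating the $j$-th eigenvalue $\lambda=\lambda_j(A)$: a spectral expression which lies between $m$ and $m+o(1)$, and a spatial one which, via an $r$-net argument in $G$, is at most $C(d,j)n/r$. Choosing $r\asymp\log\log n$ will then yield the claimed bound. Denote by $m$ the multiplicity of $\lambda$ and by $\Pi$ the orthogonal projector onto its eigenspace, so $\text{tr}(\Pi)=m$.

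First, I would construct a polynomial $P$ of degree $r$ (to be tuned) with $P(\lambda)=1$, $|P(\mu)|\leq \varepsilon$ at every eigenvalue $\mu\neq\lambda$ of $A$, and $\|P\|_{L^\infty([-d,d])}=O(1)$. A shifted Chebyshev polynomial on $[-d,d]$ multiplied by the low-degree Lagrange factor $\prod_{k<j}(x-\lambda_k)/(\lambda-\lambda_k)$, which annihilates the at most $j-1$ eigenvalues above $\lambda$, accomplishes this with $\varepsilon=o(1/n)$ and $r=O(\log n)$; a sharper construction exploiting the density of the spectrum is required to push $r$ down to $O(\log\log n)$. Then spectrally $\text{tr}(P(A)^2)=\sum_i P(\lambda_i)^2\in[m,m+n\varepsilon^2]$, while spatially, since $P$ has degree $r$, the matrix $P(A)$ has bandwidth $r$ in the graph metric: $P(A)_{x,y}=0$ whenever $d_G(x,y)>r$.

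Next, I would pick a maximal $r$-separated subset $\mathcal{N}\subseteq V(G)$. Using that $G$ is connected with maximum degree $\leq d$, every ball $B_G(x,r/2)$ contains at least $\lceil r/2\rceil+1$ vertices along any geodesic emanating from the center, giving $|\mathcal{N}|\leq 2n/(r+2)$. Viewing $P(A)$ as an approximate rank-$m$ projector, I would express $\text{tr}(P(A)^2)=\sum_{x,y} P(A)_{x,y}^2$ as a sum of local contributions centered at the net points and show that each contribution is $O_{d,j}(1)$. This yields $\text{tr}(P(A)^2)\leq C(d,j)|\mathcal{N}|\leq C'(d,j)\,n/r$, which, combined with the spectral lower bound and the choice $r\asymp\log\log n$, produces the desired $m\leq C(d,j)\,n/\log\log n$.

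The hardest part is the local bound in the last step. Naively estimating each entry $|P(A)_{x,y}|\leq \|P\|_\infty=O(1)$ yields a local contribution of size $|B(x,r)|\leq d^r$, which is far too large. The key refinement, which is the combinatorial content of \cite[Theorem 2.2]{equiangular}, is to exploit the approximate-projector structure of $P(A)$: the average squared entry on any $r$-ball is $O(1/|B(x,r)|)$ rather than $O(1)$. Quantifying this ``no-concentration'' statement for an eigenspace of high multiplicity inside an $r$-ball of a bounded-degree graph is the technical heart of the argument; the $j$-dependence enters both through the polynomial construction (we must suppress the at most $j-1$ eigenvalues above $\lambda_j$) and through the index shift in relating multiplicities to $\Pi$.
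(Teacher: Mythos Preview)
Your approach has a fundamental gap at the polynomial construction step. You require a polynomial $P$ of degree $r=O(\log\log n)$ with $P(\lambda_j)=1$ and $|P(\mu)|\leq\varepsilon=o(1/n)$ at every other eigenvalue $\mu$ of $A$. But no spectral gap between $\lambda_j$ and $\lambda_{j+1}$ is assumed: they may differ by an arbitrarily small $\delta>0$, and any polynomial with value $1$ at $\lambda_j$ and value $o(1)$ at $\lambda_{j+1}$ must then have degree $\gtrsim d/\delta$, which is unbounded. Your parenthetical remark that ``a sharper construction exploiting the density of the spectrum is required'' is precisely the missing ingredient, and no such construction exists in general; even the weaker $r=O(\log n)$ claim already fails for the same reason. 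The Lagrange factor handles only the $j-1$ eigenvalues \emph{above} $\lambda_j$, not those just below it.

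More structurally, the argument in \cite{equiangular} (summarized in Section~\ref{s:subgraph}) does not filter spectrally at all. It \emph{removes} the vertices of an $r_1$-net ($r_1\asymp\log\log n$) to form a subgraph $H$, and uses Cauchy interlacing to bound $m$ by $|G\setminus H|+m'$, where $m'$ counts eigenvalues of $A_H$ near $\lambda_j(A_G)$. The net size gives the $n/\log\log n$ term directly. Then $m'$ is shown to be negligible via a \emph{two-scale} trace argument: closed walks of length $2r_1$ in $H$ lose at least one path relative to closed walks in $G$ (the ``gain of $1$''), and this gain is amplified to length $2r_2$ with $r_2\asymp\log n$ using Perron--Frobenius and the min-max principle applied locally in balls of radius $r_2$. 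Your proposal has no analogue of the vertex-removal/interlacing step, nor of the $r_1\ll r_2$ amplification; the ``local bound'' you flag as the hardest part is left entirely open, and the heuristic that the approximate projector has average squared entry $O(1/|B(x,r)|)$ on each $r$-ball is itself a delocalization statement of the same strength as the conclusion you are after.
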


The main motivation of the authors of \cite{equiangular} is the equiangular problem, namely the computation of the maximal number of lines in $\R^d$ which are pairwise separated by the same angle. This problem shows up for instance through tight frames in coding theory. In \cite[Theorem 1.2]{equiangular}, the equiangular problem for a fixed angle $\alpha$ between the lines is solved by showing that it may be reduced to Theorem \ref{t:equiangular}.

We also mention the work \cite{mckenzie}, in which an improvement of \cite[Theorem 2.2]{equiangular} is proven for regular graphs. This improvement does not seem easy to transfer to (negatively curved) surfaces. 

\medskip

\noindent\textbf{Higher dimension.}
On any closed manifold $M$ of dimension $n \geq 3$, it is possible to construct a sequence of metrics whose first non-trivial eigenvalue multiplicity tends to $+\infty$ (see \cite{CdV86}). On the other hand, it is proved in \cite[Corollary 1.1]{hassan} that for $n$-dimensional manifolds whose geometry is controlled, the multiplicity of $\lambda_2$ is bounded: there exists $C$ depending on $n$ only such that in any $n$-dimensional Riemannian manifold with ${\rm Ricci}\geq -(n-1)\kappa$ for some $\kappa\geq 0$, the multiplicity of $\lambda_2$ is at most $C_0(1+{\rm vol}(M)(\kappa^{n/2}+{\rm inj}(M)^{-n}))$ where ${\rm inj}(M)$ denotes the injectivity radius of $M$. This has to be compared with our Theorem \ref{t:sublinearA2}. Indeed, we believe that our proofs of Theorems \ref{t:volumebound} and \ref{t:sublinearA2} work in any dimension, up to changing constants depending on $n$. We do not pursue this here since this paper is mostly devoted to surfaces.

\subsection{Strategy of proof}\label{s:strategy} 

\subsubsection{Warm-up: proof in the graph case}\label{s:subgraph}
Our strategy to prove Theorems \ref{t:sublinearA} and \ref{t:extension} is partly inspired by the proof of Theorem \ref{t:equiangular} worked out in \cite{equiangular}.
We provide here a summary of this proof.

Let $d>0$ and let $G$ be a connected graph with degree $\leq d$, whose adjacency matrix is denoted by $A_G$. The authors of \cite{equiangular} introduce a subgraph $H\subset G$ whose complement $G\setminus H$ is an $r_1$-net: it means that any vertex of $G$ is at distance at most $r_1$ from $G\setminus H$. The parameter $r_1$ is chosen as $r_1=\lfloor c\log\log(n)\rfloor$, where $n$ is the number of vertices of $G$ and $c>0$ is a small constant.

The first step is to find an upper bound for the trace of $A_H^{2r_1}$, where $A_H$ is the adjacency matrix of $H$.
For this, the authors of \cite{equiangular} leverage the usual technique of expressing a trace as a number of closed paths. The trace $\text{Tr}(A_H^{2r_1})$ is bounded above by the number of paths of length $2r_1$ in $G$, which start from a given vertex $x\in H$ and do not belong to the $r_1$-net $G\setminus H$ at time $r_1$.  It follows from the definition of an $r_1$-net that this number is smaller by at least $1$ than the total number of paths of length $2r_1$ in $G$ which start from $x$ and end at $x$: we call this the ``gain of $1$".

This gain of $1$ is transformed into a larger gain by considering the trace of $A_H^{2r_2}$ with $r_2=\lfloor c\log(n)\rfloor\gg r_1$, instead of the trace of $A_H^{2r_1}$. The argument to get this larger gain relies on the Perron-Frobenius theorem and the min-max principle applied locally in balls of radius $r_2$. The large gain which is obtained provides a strong bound on $\text{Tr}(A_H^{2r_2})$, and thus on the number of eigenvalues of $A_H$ close to $\lambda_2(A_H)$ (recall that $\lambda_1(A_H)>\lambda_2(A_H)\geq \ldots$). 

Finally, the Cauchy interlacing theorem (Theorem \ref{t:cauchy}) converts this bound into a similar bound on the eigenvalues of $A_G$. The bounds depend on $d$.

\subsubsection{Main steps}
The main steps of our proof of Theorem \ref{t:volumebound} mimic the above proof, with many additional difficulties and several new ideas. We focus on the case where ${\rm vol}(M)\gg1$, since the small volume case will be seen to follow from \cite{hassan}.
\begin{enumerate}[(i)]
    \item We consider, instead of directly $\lambda_2=\lambda_2(M)$, the maximal multiplicity of $e^{-t\lambda_2(M)}$ as an eigenvalue of $e^{t\Delta}$, thus reinterpreting the problem in terms of heat kernels (and random walks).
    \item In analogy with the graph case, we set $r_1=c\log\log({\rm vol}(M))$ where  
    $c>0$ is a small constant. In the setting of Theorem \ref{t:sublinearA}, $r_1\approx c\log\log g$, and the genus $g$ plays the role of the number of vertices $n$ in the graph case. We choose an $r_1$-net $\{x_1,\ldots,x_\ell\}\subset M$: this means that any point in $M$ is at distance at most $r_1$ from one of the $x_k$'s. Then we fix around each $x_k$ an open set $V_k$ of measure $\sim 1$. We define the operator $P:L^2(M,\nu)\rightarrow L^2(M,\nu)$ as the orthogonal projection to the space of functions which are $L^2$-orthogonal to the (normalized) characteristic functions of the $V_k$'s. 
    \item We use a Cauchy interlacing theorem in Hilbert spaces (see Theorem \ref{t:cauchy}): we compare the  multiplicity $m$ of $e^{-r_1\lambda_2}$ as an eigenvalue of $e^{r_1\Delta}$ with the multiplicity $m'$ of $e^{-r_1\lambda_2}$ as an eigenvalue of $Pe^{r_1\Delta}P$. 

The Cauchy interlacing theorem implies that 
    \begin{equation}\label{e:mm'P}
    m\leq m'+\text{rank}(\text{Id}-P).
    \end{equation}
    Our choice of $P$ guarantees that $\text{rank}(\text{Id}-P)=O({\rm vol}(M)/\log \log({\rm vol}(M)))$, or even $\text{rank}(\text{Id}-P)=O({\rm vol}(M)/\log^\alpha({\rm vol}(M)))$  when we work under the assumption $\lambda_2(M)\geq \delta$ (to prove the second part of Theorem \ref{t:sublinearA}). The next steps prove an upper bound on $m'$.
    \item We choose $r_2=c\log {\rm vol}(M)$ and $n\approx \lfloor r_2/r_1\rfloor$ and
    we compute the trace of  $(Pe^{r_1\Delta}P)^{2n}$ to bound above $m'$:
    \begin{equation}\label{e:introstep1}
m'e^{-2nr_1\lambda_2}\leq \text{Tr}((Pe^{r_1\Delta}P)^{2n}).
    \end{equation}
    The trace in the right-hand side may in turn be written as an integral of the form 
    \begin{equation}\label{e:introstep2}
   \text{Tr}((Pe^{r_1\Delta}P)^{2n})=\int_M \|(Pe^{r_1\Delta}P)^{n}\delta_x\|^2d\nu(x).
    \end{equation}
    \item We leverage the averaging properties of the heat kernel to prove an inequality which roughly looks like\footnote{Here we warn the reader that the sequence of inequalities we prove is actually much more subtle than \eqref{e:rough}.}
    \begin{equation}\label{e:rough}
    \|(Pe^{r_1\Delta}P)^{\lfloor r_2/r_1\rfloor}\delta_x\|\leq \left((1-\varepsilon)e^{-r_1\lambda_2}\right)^{\lfloor r_2/r_1\rfloor}
    \end{equation}
    for ``most points" $x\in M$. The assumption on the injectivity radius in Theorem \ref{t:volumebound} comes from the proof of \eqref{e:rough}, but also from the construction of the $r_1$-net in Step (ii). \\
    Combining \eqref{e:introstep1}, \eqref{e:introstep2} and \eqref{e:rough} we obtain for some $C_0>0$
    \begin{equation}\label{e:m'upperbound}
    m'\leq C_0 {\rm vol}(M)(1-\varepsilon)^{2\lfloor r_2/r_1\rfloor}\leq C_0 {\rm vol}(M)\exp\left(-2\varepsilon\lfloor r_2/r_1\rfloor\right)
    \end{equation}
    The quantity $\varepsilon>0$, which depends on the volume ${\rm vol}(M)$, is the ``gain", and we prove it to be sufficiently large, so that Theorem \ref{t:volumebound} follows from \eqref{e:mm'P}, \eqref{e:m'upperbound} and our choices of $r_1$, $r_2$, $P$. 
\end{enumerate} 
The proof of Step (v) is the heart of our contribution, and a more detailed summary of this step is provided at the beginning of Section \ref{s:gainestimates}, before its actual proof. Whereas the ``gain" is straightforward to obtain for graphs (see Section \ref{s:subgraph}), we have to face in the case of surfaces several difficulties.

A first difficulty comes from the infinite speed of propagation of the heat kernel\footnote{Although there exists a ``random walk at speed $1$" on manifolds (see for instance \cite{lebeaumichel}), whose kernel is the most obvious analogue of the adjacency matrix $A_G$ of Section \ref{s:subgraph}, we use in this paper the heat kernel because it seems more natural to understand the Laplacian on manifolds, and because the bounds available on the kernel of the ``random walk at speed $1$" are not as good as the ones available on the heat kernel.}. This property a priori prevents us from using any local argument in the manifold; however, as mentioned in Section \ref{s:subgraph}, we need to apply the min-max principle locally in balls of radius $\approx r_2$ to obtain the quantitative gain $\varepsilon$. To overcome this difficulty we introduce some cut-offs $\chi_x$ (approximately the characteristic function of a ball of center $x$ and radius $\cutchi r_2$ for some large $\cutchi$) commuting with $P$, and consider the compact operators $B_x=P\chi_xe^{r_1\Delta}\chi_x P$ instead of $Pe^{r_1\Delta}P$ in \eqref{e:rough}. The remainder terms which unavoidably appear when replacing $Pe^{r_1\Delta}P$ by $B_x$ are handled through classical heat kernel estimates in the universal cover of $M$.

Another difficulty arises from the fact that the operator $B_x$, which somehow plays locally around $x$ the role of $A_H^{r_1}$ in Section \ref{s:subgraph}, has one main difference with $A_H^{r_1}$: its matrix elements are not necessarily non-negative (the condition $f,g\geq 0$ does not imply that $(B_xf,g)\geq0$), and the Perron-Frobenius theorem therefore does not apply to $B_x$; however, as mentioned in Section \ref{s:subgraph}, we do need to apply a Perron-Frobenius-type argument in local balls. We overcome this difficulty by analyzing the interplay between the positive and the negative part of the top eigenvector  $\varphi_x$ of $B_x$. This allows us to recover a gain $\varepsilon$ despite the lack of positivity (in the sense of matrix elements) of $B_x$.

\medskip

\noindent\textbf{Organization of the paper.} The paper is organized as follows. We introduce useful notation in Section \ref{s:notation}, and we prove elementary results regarding $r$-nets in Section \ref{s:voronoi}. In Section \ref{s:heathyp} we state estimates on the heat kernel in $M$ and its universal cover $\widetilde{M}$. Section \ref{s:key} gathers the key lemmas used in the proof of Theorem \ref{t:volumebound}: in Section \ref{s:errorterms} we compare the trace $\text{Tr}((Pe^{r_1\Delta}P)^n)$ to an integral of local Rayleigh quotients and we estimate the error terms; in Section \ref{s:applicationsminmax} we draw several consequences from the min-max principle used to bound the previously mentioned Rayleigh quotients; in Section \ref{s:gainestimates}, we prove the gain described in Step (v) above.  In Section \ref{s:proofofth}, we proceed with the proof of Theorems \ref{t:volumebound} and \ref{t:sublinearA}, and in Section \ref{s:proofofextension} we explain how to modify this proof to obtain Theorem \ref{t:extension}. In Section \ref{s:scalefree} we prove version of Theorem \ref{t:volumebound} involving only quantities which are invariant under rescaling of the metric (``scale-free"). In Section \ref{s:construction} we prove Proposition \ref{p:construction}, relying on a construction due to \cite{ColboisCdV}.
In Appendix \ref{a:eigenvalues}, we gather several elementary results such as the Cauchy interlacing theorem in infinite dimensional Hilbert spaces and an upper bound for the first eigenvalues of closed negatively curved surfaces. Finally in Section \ref{a:heatkernel} we prove the heat kernel estimates stated in Section \ref{s:heathyp}.

\medskip

\noindent\textbf{Acknowledgment.} The authors are thankful to Yves Colin de Verdière, Gilles Courtois, Maxime Fortier Bourque, Laura Monk, Carl Schildkraut and Yufei Zhao for answering questions related to this work. They also would like to thank the referees for their careful reading of the manuscript and their suggestions which greatly improved the paper. Part of this work was done while C.L. was supported by the Simons Foundation Grant
601948, DJ. S.M. was supported by the National Science Foundation under Grant No. DMS-1926686.


\section{Preliminaries}\label{s:preliminaries}
This section gathers notation and elementary results concerning Voronoi cells in closed  surfaces, $r$-nets and the heat kernel. We work in the general setting of Theorem \ref{t:volumebound}, therefore we fix $\rho>0$, $b<0$ and $M$ a closed, connected Riemannian surface with injectivity radius ${\rm inj}(M)\geq \rho$ and Gaussian curvature $c(x)\geq b$ for any $x\in M$. We also assume that ${\rm vol}(M)\geq 3$, in order for $\log\log({\vol}(M))$ to be well-defined; the case of small volumes will be handled separately at the end of Section \ref{s:proofofth}.

\subsection{Notation}\label{s:notation}
 The Riemannian distance in $M$ is denoted by $d(\cdot,\cdot)$ and the open ball of center $x\in M$ and radius $r>0$ is $B_d(x,r)$. 

We denote by $\nu$ the Riemannian volume on $M$, by $\langle \cdot,\cdot\rangle$ the scalar product with respect to $\nu$, and by $\|\cdot\|$ the associated norm. We introduce the positive parameters
\begin{equation}\label{e:defr1r2}
r_1=c\log\log({\rm vol}(M)), \quad r_2= c\log{\rm vol}(M)
\end{equation}
where $c>0$ is a small constant which will be fixed in Section \ref{s:proofofth} (see Remark \ref{r:choiceparameter} for an explanation on the choice of these parameters).

Let $(\hyp,d_{\hyp})$ be the universal cover of $M$ endowed with the lifted Riemannian metric.  Let $\Vol_{\hyp}$ be the Riemannian volume on $\hyp$. We recall that the volume of any ball $B(x,r)$ of radius $r$ in $\hyp$ satisfies 
\begin{equation}\label{e:volumeballmodel}
\Vol_{\hyp}(B(x,r))\leq \frac{4\pi}{|b|}\sinh^2\left(\frac{\sqrt{|b|}}{2}r\right)
\end{equation}
according to the Bishop-Gromov inequality.

\medskip

\noindent\textbf{Heat kernels.} We denote by $k_t:\hyp\times\hyp\rightarrow \R$ the heat kernel in $\hyp$, so that for any $f\in L^2(\hyp,d\Vol_{\hyp})$, the solution $u:\R^+\times \hyp\rightarrow\R$ of $\partial_t u=\Delta u$ with initial datum $u(0,\cdot)=f$ is given by
$$
u(t,x)=\int_{\hyp}k_t(x,y)f(y)d\Vol_{\hyp}(y).
$$
By a slight abuse, in the case where $\hyp$ has constant curvature, we use the same notation $k_t$ (with only one argument) for the function $k_t:\R^+\rightarrow \R$ defined by $k_t(d_{\hyp}(x,y))=k_t(x,y)$ for any $x,y\in\hyp$. This definition makes sense since $k_t(x,y)$ only depends on $d_{\hyp}(x,y)$.

 The linear operator $e^{\Delta}$ is compact, self-adjoint in $L^2(M,\nu)$, with norm $1$.
We denote by $K_t(x,y)$ the heat kernel on $M$, i.e., it satisfies for any $t\geq 0$
$$
e^{t\Delta}f(x)=\int_M K_t(x,y)f(y)\nu(dy).
$$ 
We have $e^{t\Delta}\delta_x=K_t(x,\cdot)\in L^2(M,\nu)$ for any $t>0$ and $x\in M$. Writing $M=\Gamma\backslash\hyp$, we have the formula 
\begin{equation}\label{e:heatkernelinM}
K_t(x,y)=\sum_{\gamma\in\Gamma}k_t(\bar{x},\gamma\bar{y})
\end{equation}
where $\bar{x},\bar{y}$ are lifts of $x,y$ in a fixed fundamental domain of $M$ in $\hyp$ (the convergence of this sum is proved in the proof of Lemma \ref{l:heatkernelinM} in Appendix \ref{a:heatkernel}).

By rescaling the Riemannian metric on $\mathbb{H}^2$, we create a space $\widetilde{M}_K$ of constant Gaussian curvature $K < 0$, which is a simply connected space form. The universal cover of a closed surface with constant Gaussian curvature $K$ is isometric to $\widetilde{M}_K$.

The following lower bound on the heat kernel is proved in \cite[Théorème 1]{gaveau}.
\begin{lemma}[Comparison for the heat kernel]\label{l:comparisonheat}
Let $\hyp$ be a complete and simply connected Riemannian manifold with associated distance $d_{\hyp}$ and heat kernel $k_t(\cdot,\cdot)$. Assume that its sectional curvature is bounded 
below by $b$. Then for any $x,x_0\in \hyp$ and any $t>0$,
$$
k_t^{(b)}(d_{\hyp}(x_0,x))  \leq k_t(x_0,x)
$$
where $k_t^{(K)}(\cdot)$ denotes the heat kernel (with radial variable) on $\widetilde{M}_K$.
\end{lemma}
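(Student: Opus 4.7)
The plan is to show that the function $v(t,x) := k_t^{(b)}(d_{\hyp}(x_0,x))$ is a subsolution of the heat equation on $\hyp$, and then to invoke the parabolic maximum principle to compare it against $k_t(x_0,x)$. In the applications of the lemma, $\hyp$ is the universal cover of a surface of strictly negative Gaussian curvature, so by the Cartan--Hadamard theorem the distance function $r(x) := d_{\hyp}(x_0, x)$ is smooth on $\hyp \setminus \{x_0\}$ and the cut locus is empty; more generally, one can argue in the sense of distributions.

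First I would compute, using $|\nabla r| = 1$,
\begin{equation*}
\Delta_{\hyp} v(t,x) = (k_t^{(b)})''(r) + (k_t^{(b)})'(r)\,\Delta_{\hyp} r,
\end{equation*}
and record that on the model space $\widetilde{M}_b$ the heat equation reads
\begin{equation*}
\partial_t k_t^{(b)}(r) = (k_t^{(b)})''(r) + \sqrt{|b|}\coth(\sqrt{|b|}r)\,(k_t^{(b)})'(r).
\end{equation*}
Two classical ingredients then combine: in dimension two a sectional-curvature lower bound $\geq b$ is equivalent to the Ricci lower bound $\geq b$, so the Laplace comparison theorem yields
\begin{equation*}
\Delta_{\hyp} r \;\leq\; \sqrt{|b|}\coth(\sqrt{|b|}r);
\end{equation*}
moreover the model heat kernel is strictly radially decreasing, i.e.\ $(k_t^{(b)})'(r)\leq 0$ (a classical fact that can be read off from its explicit integral representation). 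Multiplying the Laplace comparison inequality by the nonpositive quantity $(k_t^{(b)})'(r)$ reverses the inequality, and one obtains
\begin{equation*}
\Delta_{\hyp} v(t,x) \;\geq\; (k_t^{(b)})''(r) + \sqrt{|b|}\coth(\sqrt{|b|}r)\,(k_t^{(b)})'(r) \;=\; \partial_t v(t,x),
\end{equation*}
so $v$ is a subsolution of the heat equation on $(0,\infty)\times(\hyp\setminus\{x_0\})$.

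Finally, I would apply the parabolic maximum principle to $w(t,x):=k_t(x_0,x) - v(t,x)$, which satisfies $\partial_t w \leq \Delta_{\hyp} w$ away from $x_0$. Both $k_t(x_0,\cdot)$ and $v(t,\cdot)$ tend to $\delta_{x_0}$ as $t\downarrow 0$ and share the same leading Minakshisundaram--Pleijel short-time asymptotics $(4\pi t)^{-1}e^{-r^2/4t}$ near $x_0$, so the singularity at $(0,x_0)$ is removable in the comparison. The main technical obstacle is that $\hyp$ is non-compact and that the initial datum is singular: one handles this by exhausting $\hyp$ with geodesic balls $B(x_0,R)$, applying the maximum principle on each parabolic cylinder $(\varepsilon,T)\times B(x_0,R)$, and sending first $R\to\infty$ (using standard Gaussian upper bounds on both $k_t$ and $v$ to kill the boundary contribution) and then $\varepsilon\to 0$. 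This yields $w\geq 0$ everywhere, which is the asserted inequality.
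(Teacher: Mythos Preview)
The paper does not give its own proof of this lemma; it simply attributes the result to \cite[Th\'eor\`eme~1]{gaveau} (Debiard--Gaveau--Mazet). Your sketch is precisely the classical argument behind that reference: transplant the model radial profile, use Laplace comparison together with radial monotonicity of $k_t^{(b)}$ to get a subsolution, and finish with the parabolic maximum principle. So in substance you are reproducing the cited proof rather than giving an alternative route.

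One sign slip: with $v$ a subsolution and $k_t$ a solution, the difference $w=k_t-v$ satisfies $(\partial_t-\Delta_{\hyp})w\geq 0$, not $\leq 0$ as you wrote. This is the correct direction for the minimum principle to yield $w\geq 0$, so your conclusion is unaffected, but the inequality in the text should be flipped. Also note that the lemma as stated does not assume $\hyp$ is Cartan--Hadamard (only a lower curvature bound), so in full generality one must deal with the cut locus via the distributional Laplace comparison or Calabi's support-function trick; your parenthetical remark acknowledges this, but in the paper's applications the curvature is strictly negative and $\hyp$ is indeed Cartan--Hadamard, so the smooth argument suffices there.
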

Concerning upper bounds on the heat kernel, our main tool is the bound \eqref{e:fundaheat2} below, due to \cite[Theorem 3]{davies2} (when combined with \cite[Proposition 14]{croke}).

\medskip

\noindent\textbf{Constants.} Throughout the paper we use the following conventions to denote constants: 
\begin{itemize}
    \item we keep the same notation for constants which may change from line to line. 
    \item Constants with an integer subscript, namely $C_0$, $C_1$, $\ldots$, depend on $a$, $b$ and $\rho$ only.
    \item $C>0$ and $C'>0$ denote two sufficiently large constants whose values are fixed in the proof of Theorem \ref{t:volumebound}, in \eqref{e:choicecutchi}-\eqref{e:choicecutheat22}. $\cutheat$ and $\cutchi$ are introduced respectively in Lemma \ref{l:heatkernelesthyp} and at the beginning of Section \ref{s:errorterms}. 
    \item The constant $c>0$ introduced in \eqref{e:defr1r2} is fixed in \eqref{e:choicec} (chosen sufficiently small).
\end{itemize}

\subsection{$r$-nets and Voronoi cells}\label{s:voronoi}
As explained in Section \ref{s:strategy}, we need for our proof to consider a subset $V$ of  measure of order $1$ such that any point of $M$ is at distance $\leq r_1$ of $V$. In the context of graphs, e.g., in \cite{equiangular}, such sets, called $r_1$-nets, are subsets of the sets of vertices. In the case of a closed negatively curved surface $M$, we cut $M$ into Voronoi cells and select a (not too large) subset of cells well distributed over $M$. This section gathers the necessary definitions and results.

In the sequel, an \textit{$r$-separated set} is a set of points $x_1,\ldots,x_m\in M$ such that $d(x_i,x_j)\geq r$ for any distinct $i,j\in\{1,\ldots,m\}$. An \textit{$r$-net} is a set of points $x_1,\ldots,x_m\in M$ such that for any $y\in M$ there exists $i\in\net$ such that $d(y,x_i)\leq r$.

The following two lemmas prove the existence of $r$-nets whose size is not too large.

\begin{lemma}\label{l:taillernet}
For any $\rho>0$ there exists $C_{0}>0$ such that for any closed, connected Riemannian surface $M$ with ${\rm inj}(M)\geq \rho$, and for any $r\geq 1$, there exists an $r$-net in $M$ of cardinal at most $\max(1,C_0 {\rm vol}(M)/r)$.
\end{lemma}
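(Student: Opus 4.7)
The plan is to take a maximal $r$-separated subset $\{x_1,\dots,x_m\}\subset M$; by maximality every point of $M$ lies within distance $r$ of some $x_i$, so this set is automatically an $r$-net, and it only remains to bound $m$. The open balls $B(x_i,r/2)$ are pairwise disjoint (their centers are $r$-separated), hence $m\cdot\min_i\vol(B(x_i,r/2))\le\vol(M)$, and the task reduces to producing a lower bound on $\vol(B(x_i,r/2))$ depending only on $\rho$.

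The key input, which crucially does \emph{not} require any curvature bound, is Croke's isoperimetric inequality: there exists a universal constant $c>0$ such that for every Riemannian surface, every $x\in M$, and every $s\le{\rm inj}(x)/2$, one has $\vol(B(x,s))\ge c\, s^{2}$. Assume $m\ge 2$ (the case $m=1$ satisfies the bound trivially). I would then split according to the size of $r$.

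If $1\le r\le\rho$, then $r/2\le{\rm inj}(M)/2$, so Croke directly gives $\vol(B(x_i,r/2))\ge cr^{2}/4$; combining this with the disjointness and with $r\ge 1$ (so $1/r^{2}\le 1/r$) yields $m\le 4\vol(M)/(cr)$. If $r>\rho$, then $m\ge 2$ forces $\mathrm{diam}(M)\ge r$, so from $x_i$ I can emit a minimizing geodesic $\gamma:[0,r]\to M$ ending at some $x_j$. Every subsegment of a minimizing geodesic is minimizing, so $d(\gamma(s),\gamma(t))=|s-t|$ for all $s,t\in[0,r]$. Setting $p_k=\gamma(k\rho/2)$ for $k=0,1,\dots,\lfloor r/\rho-1/2\rfloor$, the open balls $B(p_k,\rho/4)$ are pairwise disjoint (since $d(p_k,p_{k'})=|k-k'|\rho/2\ge\rho/2$) and all lie inside $B(x_i,r/2)$ (since $k\rho/2+\rho/4\le r/2$). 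Croke, applied at the admissible radius $\rho/4\le{\rm inj}(M)/2$, gives $\vol(B(p_k,\rho/4))\ge c\rho^{2}/16$, and there are at least $r/(2\rho)$ such balls when $r\ge\rho$, hence $\vol(B(x_i,r/2))\ge c\rho r/32$ and $m\le 32\vol(M)/(c\rho r)$. Setting $C_{0}=\max(4/c,\,32/(c\rho))$ completes the proof.

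The only non-routine step is the second regime: since Croke applies only at radii $\le{\rm inj}(M)/2=\rho/2$, a direct use on $B(x_i,r/2)$ fails as soon as $r>\rho$, and the linear-in-$r$ lower bound on $\vol(B(x_i,r/2))$ must be extracted by packing roughly $r/\rho$ small disjoint balls along a long minimizing geodesic issued from $x_i$. This geodesic-packing device is what keeps $C_{0}$ depending only on $\rho$ (and the dimension), rather than on a lower curvature bound $b$ as a Bishop--Gromov volume comparison would require.
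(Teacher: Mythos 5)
Your proof is correct and follows essentially the same route as the paper: a maximal $r$-separated set serves as the $r$-net, the disjoint balls $B(x_i,r/2)$ reduce everything to a volume lower bound linear in $r$, and that bound is obtained by packing small disjoint balls (controlled from below via Croke's inequality using only the injectivity radius) along a minimizing geodesic issued from $x_i$. Your only deviations are cosmetic: you split into the regimes $r\leq\rho$ and $r>\rho$ and pack balls of radius $\rho/4$, whereas the paper packs unit-scale balls of radius $1/2$ spaced at distance $1$ along a geodesic of length $r/2$.
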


\begin{proof}
The proof follows classical arguments, see e.g. \cite[Lemma 2.2]{hassan}.
Let $\rho>0$ and $b<0$. If $\text{diam}(M)\leq r$, then there is an $r$-net of size $1$. So suppose that $\text{diam}(M)>r$. Let $x_1, \ldots, x_\ell \in M$ be an $r$-separated set of maximal cardinality in $M$. Then $X:=\{x_1, \ldots, x_\ell\}$ is an $r$-net of $M$ and $B_d(x_i,r/2) \cap B_d(x_j,r/2)=\emptyset$ for all $i \neq j$. So 
\begin{equation}\label{e:argseparation}
\ell \min_{i\in\{1;\ldots;\ell\}} \vol(B_d(x_i,r/2)) \leq \sum_{i=1}^\ell \vol(B_d(x_i,r/2)) \leq \vol(M).
\end{equation}
It thus suffices to show that for any $x \in M$, $B_d(x, r/2)$ has volume at least $C_1 r$ for some constant $C_1 > 0$ (depending on $b<0$ and $\rho>0$ only). Fix $x\in M$. Take any $y \in M$ such that $d(x,y)\geq r/2$  -- there must be at least one such $y$ since $\text{diam}(M)> r$ -- and let $\gamma: [0;r/2] \rightarrow M$ be a continuous path of minimal length from $x$ to $y$. Then the balls $B_d(\gamma(n), 1/2)$ for $n \in \{0, \ldots ,\lfloor (r-1)/2 \rfloor\}$ are pairwise disjoint and contained in $B_d(x,r/2)$. There is in addition a universal constant $C_2 > 0$ such that $\vol(B_d(z, 1/2))\geq C_2\rho^2$ for all $z\in M$ (due to \cite[Proposition 14]{croke}), and we use this for $z=\gamma(n)$, $n \in \{0, \ldots ,\lfloor (r-1)/2 \rfloor\}$. So 
\begin{equation}\label{e:volumeball}\vol(B_d(x,r/2)) \geq \left(\Bigl\lfloor \frac{r-1}{2} \Bigr\rfloor+1\right)C_2\rho^2 \geq C_2 \rho^2 \frac{r}{2}.
\end{equation}
Hence, $\ell \leq \frac{C_3}{\rho^2}\frac{{\rm vol}(M)}{r}$ for some universal constant $C_3 > 0$.
\end{proof}

\begin{lemma}\label{l:spectralgap}
For any $\rho,\delta>0$ and $b<0$ there exists $C_0 >0$, such that for any closed, connected Riemannian surface $M$ with ${\rm inj}(M)\geq \rho$ and spectral gap $\lambda_2(M)\geq\delta$, and for any $r$ such that $1\leq r\leq \frac{1}{\sqrt{|b|}}\log(|b|{\rm vol}(M)/8\pi)$, there exists an $r$-net in $M$ of cardinal at most $\max(1,C_{0} {\rm vol}(M)/e^{\delta' r})$ where $\delta'=\max(\frac{\sqrt{\delta}}{\sqrt{20}}, \frac{\delta}{4\sqrt{|b|}})$.
\end{lemma}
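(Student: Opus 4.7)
The plan is to combine the spectral gap with an isoperimetric argument to produce exponential volume growth of small geodesic balls, and then to apply the separation argument of Lemma~\ref{l:taillernet} with this improved lower bound.

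I would first extract a lower bound on the Cheeger constant $h=h(M)$ from the hypothesis $\lambda_2(M)\geq\delta$. Buser's inequality on a surface with Gaussian curvature $\geq b$ reads $\lambda_2(M)\leq 2\sqrt{|b|}\,h+10\,h^2$. Discussing the two regimes $h\leq\sqrt{|b|}/5$ and $h>\sqrt{|b|}/5$ separately yields, respectively, $h\geq\delta/(4\sqrt{|b|})$ (from $10h^2\leq 2\sqrt{|b|}\,h$) and $h\geq\sqrt{\delta/20}$ (from $2\sqrt{|b|}\,h\leq 10h^2$), so $h\geq\delta'$ in both cases.

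Next I would convert this into a lower bound on the volume of small balls. For $x\in M$ and $s>0$ with $\vol(B_d(x,s))\leq\vol(M)/2$, the Cheeger isoperimetric inequality $\mathrm{area}(\partial A)\geq h\,\vol(A)$ applied to $A=B_d(x,s)$ gives $\frac{d}{ds}\vol(B_d(x,s))\geq h\,\vol(B_d(x,s))$. The hypothesis $r\leq\frac{1}{\sqrt{|b|}}\log(|b|\vol(M)/8\pi)$, combined with the Bishop--Gromov bound \eqref{e:volumeballmodel} (which yields $\vol(B_d(x,r))\leq\frac{\pi}{|b|}e^{\sqrt{|b|}r}$), ensures $\vol(B_d(x,r))\leq\vol(M)/8$, so the differential inequality holds throughout $s\in[0,r]$. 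Integrating from $s=1/2$ and using the local lower bound $\vol(B_d(x,1/2))\geq C_2\rho^2$ from \cite[Proposition~14]{croke} produces a bound of the form $\vol(B_d(x,r/2))\geq C\rho^2 e^{h r/2}$.

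Mimicking the proof of Lemma~\ref{l:taillernet}, I would then take a maximal $r$-separated set $\{x_1,\ldots,x_\ell\}\subset M$, which is automatically an $r$-net; since the balls $B_d(x_i,r/2)$ are pairwise disjoint, $\ell\cdot\min_i\vol(B_d(x_i,r/2))\leq\vol(M)$, yielding $\ell\leq C_0\,\vol(M)\,e^{-\delta' r}$ up to constants. The main technical point will be recovering the precise exponent $\delta' r$ rather than $\delta' r/2$: the naive integration above naturally produces the latter, and closing the factor-of-two gap will probably require either using Voronoi cells comparable to $B_d(x_i,r)$ (instead of merely $B_d(x_i,r/2)$) in the disjointness argument, or iterating the isoperimetric inequality on the full range $s\in[0,r]$ while absorbing the missing constant into a slightly tighter form of Buser's inequality.
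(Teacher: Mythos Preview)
Your approach---Buser's inequality for a lower bound on the Cheeger constant, then the Cheeger isoperimetric inequality on balls (with the upper bound on $r$ ensuring $\vol(B_d(x,r))\leq\vol(M)/2$) to obtain exponential volume growth, then the separation argument of Lemma~\ref{l:taillernet}---is exactly the paper's proof. The factor-of-two discrepancy you flag (disjoint balls of radius $r/2$ naturally produce $e^{\delta' r/2}$ rather than $e^{\delta' r}$) is real and is not resolved in the paper's proof either, which simply writes ``Replacing in \eqref{e:argseparation} and \eqref{e:volumeball} we find the result''; since only \emph{some} positive exponential rate is used downstream, this is harmless.
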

\begin{proof}
By the Buser inequality \cite{buseriso}, the Cheeger constant $h(M)$ verifies $\delta\leq \lambda_2(M)\leq 2h(M)\sqrt{|b|}+10h(M)^2$, hence $h(M)\geq \delta'$. Besides, for any $x\in M$ and any $r$ as in the statement, we have 
$$\vol(B_d(x,r))\leq \Vol_{\hyp}(B_{\hyp}(r))\leq \frac{4\pi}{|b|}\sinh^2\left(\sqrt{|b|}\frac{r}{2}\right)\leq \frac12 \vol(M)$$ 
where the second inequality comes from \eqref{e:volumeballmodel} and the third one from our assumption on $r$. Hence by definition of the Cheeger constant,
\begin{align*}
\frac{d\vol(B_d(x,r))}{dr}=\lim_{\varepsilon\rightarrow 0}\frac{\vol(B_d(x,r+\varepsilon))-\vol(B_d(x,r))}{\varepsilon}=|\partial B_d(x,r)|\geq \delta'\vol(B_d(x,r))
\end{align*}
which implies that $\vol(B_d(x,r))\geq \vol(B_d(x,1)) e^{\delta'r}$. But $\vol(B_d(x,1)) \geq C_1 \rho^2$ for some universal $C_1 > 0$ (due to \cite[Proposition 14]{croke}). Replacing in \eqref{e:argseparation} and \eqref{e:volumeball} we find the result.
\end{proof}

We fix a $1$-separated set $\mathcal{V}=\{v_1,\ldots,v_m\}$ of maximal cardinality $m$ in $M$, and we consider the Voronoi cells
$$
V_k=\{q\in M\mid \forall i\in \{1,\ldots,m\},\ d(q,v_k)\leq d(q,v_i)\}, \qquad k=1,\ldots,m.
$$
		By our choice of $\mathcal{V}$, there holds $B_d(v_k,1/2)\subset V_k\subset B_d(v_k,1)$ for any $k\in\{1,\ldots,m\}$. We notice that if $\widetilde{v}_k$ denotes a lift of $v_k$ to a fundamental domain of $M$ in $\hyp$,
\begin{equation}\label{e:volmin}
\frac{4\pi}{|b|}\sinh^2\left(\frac{\sqrt{|b|}}{2}\right)\geq \Vol_{\hyp}(B_{\hyp}(\widetilde{v}_k,1)))\geq \vol(V_k)\geq C_{0}\rho^2.
\end{equation}
for some universal $C_{0}>0$, where the leftmost inequality comes from \eqref{e:volumeballmodel}, and the rightmost inequality from \cite[Proposition 14]{croke}.

We fix a subset $\net \subset \{1,\ldots,m\}$ with the following properties:
\begin{itemize}
    \item $\#\net\leq C_0 {\rm vol}(M)/r$ (or  $\#\net \leq C_{0} {\rm vol}(M)/e^{\delta' r}$ if $\lambda_2(M)\geq \delta$ is assumed).
    \item There exists a family of points $(x_k)_{k\in\net}$ forming an $r_1$-net, such that for any $k\in\net$, $x_k\in V_k$.
\end{itemize}
The set $\net$ is constructed by first considering an $r_1$-net $\{x_1,\ldots,x_\ell\}$, and then for each $j\in[\ell]$, putting in $\net$ the index of (one of) the Voronoi cell(s) to which $x_j$ belongs.

For any $k\in\net$, we denote by $\psi_k$ the normalized characteristic function of the interior $\mathring{V}_k$ of $V_k$, i.e., $\psi_k(x)=\frac{1}{\sqrt{\vol(V_k)}}\mathbf{1}_{x\in \mathring{V}_k}$. It follows from \eqref{e:volmin} that $\|\psi_k\|_{L^\infty(M)}\leq \frac{C_2}{\rho}$. 

We denote by $P$ the orthogonal projection\footnote{The idea of considering these projections is inspired by the paper \cite{buserriem}, which shows that in a closed hyperbolic surface $M$ of genus $g$, the number of eigenvalues below $1/4$ is bounded above by $4g-2$. The proof goes by considering a triangulation of $M$ into $4g-2$ geodesic triangles, and showing that in each of these triangles, the smallest positive eigenvalue of the Neumann problem is at least $1/4$. The functions which are considered in the Rayleigh quotient minimization problem are orthogonal to the characteristic functions of the geodesic triangles.} on the orthogonal of the $\psi_k$ with respect to $\nu$:
$$
\forall f\in L^2(M,\nu), \qquad Pf=f-\sum_{k\in\net} \langle f,\psi_k\rangle \psi_k.
$$
If $\delta_x$ denotes the Dirac mass on a manifold (defined as $\delta_x(f)=f(x)$) then $P\delta_x$ is a distribution, defined as
\begin{equation}\label{e:Pdeltax}
P\delta_x=\delta_x-\sum_{k\in\net}\psi_k(x)\psi_k.
\end{equation}

\subsection{Heat kernel estimates}\label{s:heathyp}
The following lemmas  on the heat kernels in $\hyp$ and $M$ (see definitions in Section \ref{s:notation}) will be instrumental in the proof of Theorem \ref{t:volumebound}. 
The proof of these lemmas is postponed to Appendix \ref{a:heatkernel}.

Estimate 
\eqref{e:L1norm} reflects the fact that the mass of $k_t(x,\cdot)$ 
is small outside a ball of radius $Ct$ around $x$ for $C$ sufficiently large. Then, the estimate \eqref{e:variationsalpha} shows that when restricting to the interior of a ball of radius $Ct$, the heat kernel varies not too wildly over scales $\leq 4t$ (any other constant than $4$ would also work, but $4$ is the right constant for Lemma \ref{e:Ar1|f|phik}).
\begin{lemma}[Estimates on the heat kernel in $\hyp$] \label{l:heatkernelesthyp}  The following estimates hold: 
\begin{itemize}
\item \textup{($L^1$ norm outside a ball of radius $Ct$).} There exists $C_0>0$ such that for any $C\geq 8\sqrt{|b|}+4$, $t\geq 1$ and $x\in\hyp$, 
\begin{equation}\label{e:L1norm}
\|k_t(x,\cdot)\|_{L^1(\hyp\setminus B_{\hyp}(x,\cutheat t))}\leq C_0 \exp\left(|b|t-\frac{\cutheat^2t}{16}\right).
\end{equation}
\item \textup{(Variations over larger scales inside a ball of radius $Ct$).} There exists $C_0> 0$ such that for any $t \geq 1$, $\cutheat>0$, $x,y,z\in\hyp$ with $d(x,y)\leq \cutheat t$ and $|d(x,y)-d(x,z)|\leq 4t+4$, there holds
\begin{equation}\label{e:variationsalpha}  
 \frac{k_t(x,z)}{k_t(x,y)}\geq C_0\exp\left(-5(1+|b|)(1+\cutheat)t\right).
\end{equation}
\end{itemize}
\end{lemma}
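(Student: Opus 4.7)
For both parts I plan to combine a Gaussian upper bound for $k_t$ on $\hyp$ with the Bishop-Gromov volume comparison. Since $\hyp$ has sectional curvature $\geq b$, Bishop-Gromov gives $|\partial B_{\hyp}(x,r)| \leq (2\pi/\sqrt{|b|}) \sinh(\sqrt{|b|}\, r) \leq C_0 e^{\sqrt{|b|}\, r}$. The main analytic input is the Gaussian upper bound from \cite[Theorem 3]{davies2} which, combined with \cite[Proposition 14]{croke} to bound the volume factor $V(x,\sqrt{t})^{-1/2}$ uniformly for $t \geq 1$ (using that $\hyp$ is Cartan-Hadamard, so $V(x,1) \geq C_0 > 0$), yields
\[
k_t(x,y) \leq C_0\, \exp\!\Bigl(-\frac{d_{\hyp}(x,y)^2}{4t}\Bigr) \qquad (x,y \in \hyp,\ t \geq 1).
\]
For the ratio estimate I will additionally use Lemma \ref{l:comparisonheat}, together with the standard lower bound derived from the explicit heat-kernel formula on the space form $\widetilde{M}_b$:
\[
k_t^{(b)}(r) \geq C_0\, \exp\!\Bigl(-\frac{r^2}{4t} - C_1(1+|b|)(r + t)\Bigr) \qquad (r \geq 0,\ t \geq 1).
\]

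\textbf{Estimate \eqref{e:L1norm}.} Writing the $L^1$-norm in geodesic polar coordinates around $x$ and inserting the Gaussian upper bound and the Bishop-Gromov area bound,
\[
\|k_t(x,\cdot)\|_{L^1(\hyp\setminus B_{\hyp}(x,\cutheat t))} \leq C_0 \int_{\cutheat t}^{+\infty} \exp\!\Bigl(-\frac{r^2}{4t} + \sqrt{|b|}\, r\Bigr)\, dr.
\]
Completing the square gives $-r^2/(4t) + \sqrt{|b|}\, r = -(r - 2\sqrt{|b|}\, t)^2/(4t) + |b|\, t$. Under the hypothesis $\cutheat \geq 8\sqrt{|b|} + 4$, the lower endpoint $\cutheat\, t$ lies to the right of the critical point $2\sqrt{|b|}\, t$, and the elementary inequality $(\cutheat - 2\sqrt{|b|})^2 \geq \cutheat^2/4$ (which follows from $\cutheat \geq 4\sqrt{|b|}$) gives
\[
\int_{\cutheat t}^{+\infty} \exp\!\Bigl(-\frac{(r - 2\sqrt{|b|}t)^2}{4t} + |b|t\Bigr) dr \leq C_0 \sqrt{t}\, \exp\bigl(|b|t - \cutheat^2 t/16\bigr),
\]
after which the polynomial factor $\sqrt{t}$ is absorbed into $C_0$.

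\textbf{Estimate \eqref{e:variationsalpha}.} I apply the Gaussian upper bound at $y$, and for a lower bound at $z$ I use Lemma \ref{l:comparisonheat} together with the explicit lower bound on $k_t^{(b)}$ above, obtaining
\[
\frac{k_t(x,z)}{k_t(x,y)} \geq C_0\, \exp\!\Bigl(\frac{d(x,y)^2 - d(x,z)^2}{4t} - C_1(1+|b|)(d(x,z) + t)\Bigr).
\]
The hypotheses $d(x,y) \leq \cutheat\, t$ and $|d(x,y) - d(x,z)| \leq 4t + 4$ give $d(x,z) \leq (\cutheat + 4)\, t + 4$ and, using $t \geq 1$,
\[
\bigl|d(x,y)^2 - d(x,z)^2\bigr| = |d(x,y) - d(x,z)|\cdot(d(x,y)+d(x,z)) \leq (4t+4)\bigl((2\cutheat + 4)\, t + 4\bigr) \leq C(1 + \cutheat)\, t^2.
\]
Dividing by $4t$ and combining with the linear-in-$d(x,z),t$ contribution, the exponent in the ratio is bounded below by $-C(1+|b|)(1+\cutheat)\, t$; tracking the numerical constants produces the stated bound with prefactor $5$ (any polynomial-in-$t$ prefactor left over is absorbed into the exponential at the cost of enlarging the constant, since $\log t \leq t$ for $t \geq 1$).

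\textbf{Main obstacle.} The delicate technical point is pinning down the Gaussian upper bound with the sharp Gaussian constant $4$ in the denominator of the exponent; Davies' bound gives $4+\varepsilon$ for any $\varepsilon > 0$. The exponent $|b|t - \cutheat^2 t/16$ in the conclusion of \eqref{e:L1norm} is not sharp, so one can afford a $(4+\varepsilon)$ by slightly enlarging the range of $\cutheat$ required (still of the same order $\sqrt{|b|}+O(1)$), or alternatively one can derive the sharp constant on Cartan-Hadamard manifolds directly from Lemma \ref{l:comparisonheat} applied to a space form of suitable curvature. Once the Gaussian upper bound and the explicit lower bound on $\widetilde{M}_b$ are in hand, both parts reduce to the elementary calculus above.
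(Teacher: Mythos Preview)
Your approach is essentially the paper's: both parts use the Davies--Croke Gaussian upper bound on $k_t$ together with Bishop--Gromov for \eqref{e:L1norm}, and combine that upper bound with the comparison lower bound (Lemma~\ref{l:comparisonheat} plus the explicit formula on the space form $\widetilde{M}_b$) for \eqref{e:variationsalpha}; the paper carries the polynomial prefactor $(1+d^2/t)/t$ from Davies explicitly rather than suppressing it, but this makes no difference once you complete the square, since the slack $(C-2\sqrt{|b|})^2/4 - C^2/16>0$ absorbs any polynomial in $t$.

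One small correction: in the setting of this lemma only a \emph{lower} bound $c(x)\geq b$ on the curvature is assumed (this is the generality of Theorem~\ref{t:volumebound}), so $\hyp$ need not be Cartan--Hadamard and your justification of $V(x,1)\geq C_0$ fails as written. The correct argument---and the one the paper uses when it cites \cite[Proposition~14]{croke}---is that ${\rm inj}(\hyp)\geq{\rm inj}(M)\geq\rho$, after which Croke's isoperimetric inequality gives the uniform lower bound on ball volumes.
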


The following lemma on the $L^\infty$ norm of the heat kernel in $M$ is not sharp, but it is sufficient for our purpose. A proof can be found in Appendix \ref{a:heatkernel}.
\begin{lemma}[$L^{\infty}$ norm of the heat kernel in $M$]  \label{l:heatkernelinM}
There exists $C_{1}>0$ such that for any $t\geq 1$ there holds  
\begin{equation*}\label{e: Linftynorm}
    \|K_t(\cdot,\cdot)\|_{L^\infty(M\times M)} \leq C_{1}\exp(4|b|t).
\end{equation*}
\end{lemma}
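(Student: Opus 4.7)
My plan is to expand $K_t(x,y)$ over the deck group $\Gamma$ using \eqref{e:heatkernelinM}, bound each term by a Gaussian-type upper estimate on the heat kernel of $\hyp$, and control the resulting orbit sum by invoking the injectivity radius lower bound together with the Bishop--Gromov comparison \eqref{e:volumeballmodel}. Concretely, I would start from
\[
K_t(x,y) = \sum_{\gamma \in \Gamma} k_t(\bar x, \gamma \bar y),
\]
and apply the bound \eqref{e:fundaheat2} together with the volume lower bound \cite[Proposition 14]{croke} to obtain, on $\hyp$, an estimate of schematic form
\[
k_t(p,q) \leq \frac{C_0}{\rho^2}\exp\Bigl( \alpha |b| t - \frac{d_{\hyp}(p,q)^2}{\beta t}\Bigr)
\]
for explicit constants $\alpha, \beta > 0$ which I would take small enough that $\alpha + \beta/4 < 4$. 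The factor $\rho^{-2}$ arises because Croke's proposition gives a lower bound $C\rho^2$ on the volume of unit balls in $\hyp$, which enters the denominator of the standard Li--Yau/Davies Gaussian heat kernel upper bound.

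Next I would count the $\Gamma$-orbit of $\bar y$ in $\hyp$. The assumption $\mathrm{inj}(M) \geq \rho$ gives $d_{\hyp}(\gamma_1 \bar y, \gamma_2 \bar y) \geq 2\rho$ for $\gamma_1 \neq \gamma_2$, so the balls $B_{\hyp}(\gamma \bar y, \rho)$ are pairwise disjoint. Combining \cite[Proposition 14]{croke} with the Bishop--Gromov estimate \eqref{e:volumeballmodel} yields a shell count
\[
\#\bigl\{\gamma \in \Gamma : d_{\hyp}(\bar x, \gamma \bar y) \in [R, R+1]\bigr\} \leq \frac{C}{\rho^2}\, e^{\sqrt{|b|}(R+1)} \qquad (R \geq 0).
\]

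Slicing the orbit sum along these unit shells converts the pointwise estimate on $k_t$ into a one-dimensional Gaussian integral whose maximum, attained near $R \sim \beta\sqrt{|b|} t / 2$, contributes a factor $e^{\beta |b| t / 4}$ up to a polynomial in $t$. Combining the previous three displays I obtain
\[
K_t(x,y) \leq \frac{C}{\rho^4}\sqrt{t}\, e^{(\alpha + \beta/4) |b| t} \leq C_1\, e^{4|b| t},
\]
where the factor $\sqrt{t}$ is absorbed into the exponential using $t \geq 1$ and the strict inequality $\alpha + \beta/4 < 4$.

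The main obstacle is precisely the bookkeeping of the constants $\alpha, \beta$ in the Davies--Croke bound \eqref{e:fundaheat2}: the estimate must be sharp enough that the exponential orbital growth rate $\sqrt{|b|}$ from Bishop--Gromov and the Gaussian scale $\sqrt{\beta t}$ combine to a prefactor at most $4|b| t$. Once the constants are tracked, the remainder is a standard orbit summation, in which the role of the injectivity radius is precisely to turn the Bishop--Gromov upper bound into an honest lattice-point count in $\hyp$.
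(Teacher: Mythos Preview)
Your proposal is correct and follows essentially the same strategy as the paper's proof: expand $K_t$ via \eqref{e:heatkernelinM}, apply the Gaussian upper bound \eqref{e:fundaheat2}, count $\Gamma$-orbit points in unit shells using the disjoint-balls argument (injectivity radius plus Croke's volume lower bound and the Bishop--Gromov estimate \eqref{e:volumeballmodel}), and sum the resulting Gaussian against the exponential shell growth. The only cosmetic differences are that the paper records the shell count with an exponent $2\eta\sqrt{|b|}$ rather than your $\sqrt{|b|}(R+1)$, and that your parameters $\alpha,\beta$ are superfluous since \eqref{e:fundaheat2} already has $\alpha=0$ and $\beta=4$, so the bookkeeping concern you flag does not actually arise.
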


\section{Key lemmas}
\label{s:key}
This section is devoted to the proof of several lemmas which are key ingredients of the proof of Theorem \ref{t:volumebound} provided in Section \ref{s:proofofth}. As in Section \ref{s:preliminaries}, we fix $\rho>0$, $b<0$ and $M$ a closed, connected Riemannian surface with ${\rm inj}(M)\geq \rho$ and $c(x)\geq b$ for any $x\in M$. We also assume that ${\rm vol}(M)$ is sufficiently large, so that $r_1$ is well-defined and $\geq 1$.

\subsection{Error term estimates}\label{s:errorterms}
As explained in Section \ref{s:strategy}, our proof relies on finding an upper bound on the trace of the trace-class operator $(Pe^{r_1\Delta}P)^n$, and the first step is to write this trace as an integral \begin{equation}\label{e:PeP}
\text{Tr}((Pe^{r_1\Delta}P)^{2n})=\int_M \|(Pe^{r_1\Delta}P)^{n}\delta_x\|^2d\nu(x)
\end{equation}
(see Lemma \ref{l:tracedelta}). We actually choose $n=\lfloor r_2/r_1\rfloor+1$.

To leverage the gain of $\varepsilon$ that we will obtain in Section \ref{s:gainestimates} (and which is described in Step (v) of Section \ref{s:strategy}), we have to compare $\|(Pe^{r_1\Delta}P)^{n}\delta_x\|^2$ to a quantity of the form $\|(Pe^{r_1\Delta}P)^{n}\varphi\|^2$ for some well-chosen $\varphi\in L^2(M,\nu)$ depending on $x\in M$, see \eqref{e:deltaxetphiavecreste} below.
We work this out in the present section, and we provide estimates for the error terms which unavoidably appear along the way.

We denote by $\chi_x$ the indicator function of  the subset 
\begin{equation}\label{e:defcutchi}
\bigcup_{k\in \net, \ V_k \cap B_d(x,\cutchi r_2) \neq \emptyset } V_k,
\end{equation}
where $\cutchi$ will be fixed in Section \ref{s:proofofth}. This subset contains $B_d(x,\cutchi r_2)$. Besides, $[\chi_x,P]=0$ for any $x\in M$, as a consequence of the definition of $P$ and the equality $\langle \chi_x f,\psi_k\rangle \psi_k=\langle f,\chi_x\psi_k\rangle\psi_k=\langle f,\psi_k\rangle \chi_x\psi_k$, valid for any $f\in L^2(M,\nu)$ and any $k=1,\ldots,m$.

The main result of this section is the following one.
\begin{lemma}\label{l:comparedeltatophi}
There exists $C_0>0$ such that if $\cutchi \geq 32\sqrt{|b|}+16$, then for any $x\in M$,
\begin{align}
\|(Pe^{r_1\Delta}P)^{\lfloor r_2/r_1\rfloor+1}\delta_x\| \leq C_0\left(\sup_{\|\varphi\|=1}\|(P\chi_xe^{r_1\Delta}\chi_xP)^{\lfloor r_2/r_1\rfloor}\varphi\|+\exp\left(-\frac{\cutchi^2 r_2}{32}\right)\right).\label{e:deltaxetphiavecreste}
\end{align}
\end{lemma}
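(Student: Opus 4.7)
Write $T := Pe^{r_1\Delta}P$, $B_x := P\chi_xe^{r_1\Delta}\chi_xP$, and set $n := \lfloor r_2/r_1\rfloor$. The strategy is to replace $T$ by $B_x$ step by step inside $T^{n+1}\delta_x$, controlling each error by heat-kernel tail estimates.

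\emph{Step 1 (reduction to an $L^2$ vector).} Write $T^{n+1}\delta_x = T^n\varphi_0$ with $\varphi_0 := T\delta_x = Pe^{r_1\Delta}P\delta_x\in L^2(M)$. Formula \eqref{e:Pdeltax} and Lemma \ref{l:heatkernelinM} bound $\|\varphi_0\|$ by a constant $C_0$ depending only on $b$, $\rho$. Combining Lemma \ref{l:heatkernelinM} and the $L^1$ tail estimate \eqref{e:L1norm} yields that $\varphi_0$ is essentially concentrated near $x$: the $L^2$ mass of $\varphi_0$ outside $B_d(x,\cutchi r_2/2)$ is $\leq C_0\exp(-\cutchi^2 r_2/32)$ thanks to $\cutchi\geq 32\sqrt{|b|}+16$.

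\emph{Step 2 (inner cutoff).} Let $\chi'_x$ denote the indicator of $B_d(x,\cutchi r_2/2)$. Since $\|T\|\leq 1$,
\begin{equation*}
\|T^n\varphi_0\|\leq \|T^n(\chi'_x\varphi_0)\|+\|(1-\chi'_x)\varphi_0\|\leq \|T^n(\chi'_x\varphi_0)\|+C_0\exp(-\cutchi^2 r_2/32).
\end{equation*}

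\emph{Step 3 (telescoping).} Using $T^n-B_x^n=\sum_{k=0}^{n-1}T^{n-1-k}(T-B_x)B_x^k$ and $\|T\|\leq 1$,
\begin{equation*}
\|T^n(\chi'_x\varphi_0)-B_x^n(\chi'_x\varphi_0)\|\leq \sum_{k=0}^{n-1}\|(T-B_x)g_k\|, \quad g_k := B_x^k(\chi'_x\varphi_0).
\end{equation*}
Because $[P,\chi_x]=0$, one has $T-B_x = P\bigl((1-\chi_x)e^{r_1\Delta}+\chi_xe^{r_1\Delta}(1-\chi_x)\bigr)P$. Since $\supp(\chi_x)$ is a union of full Voronoi cells indexed by $\net$, both $P$ and $B_x$ preserve the subspace of $L^2$ functions supported in $\supp(\chi_x)$; hence $g_k$ is supported in $\supp(\chi_x)$, $(1-\chi_x)Pg_k=0$, and only the first half survives: $(T-B_x)g_k=P(1-\chi_x)e^{r_1\Delta}Pg_k$.

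\emph{Step 4 (leakage estimate).} The key step is to show $\|(T-B_x)g_k\|$ is very small for each $k\leq n-1$, exploiting the \emph{buffer} between $B_d(x,\cutchi r_2/2)$ and $\supp(\chi_x)\supseteq B_d(x,\cutchi r_2)$. By induction on $k$ (using \eqref{e:L1norm} iteratively in its Gaussian form), $g_k$ is essentially supported in $B_d(x,\cutchi r_2/2+k\cutheat r_1)$ with $\cutheat=8\sqrt{|b|}+4$ the heat kernel constant of Lemma \ref{l:heatkernelesthyp}. Since $\cutchi\geq 32\sqrt{|b|}+16\geq 4\cutheat$ and $n\cutheat r_1\leq \cutheat r_2\leq \cutchi r_2/4$, the essential support of $g_k$ for every $k\leq n$ lies in $B_d(x,3\cutchi r_2/4)$, leaving a buffer of width $\geq \cutchi r_2/4$ before the boundary of $\supp(\chi_x)$. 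A Cauchy--Schwarz argument based on
\begin{equation*}
\|(1-\chi_x)e^{r_1\Delta}Pg_k\|^2\leq \int |Pg_k(z)|^2\,\|K_{r_1}(\cdot,z)\|_{L^1(M\setminus \supp(\chi_x))}\,d\nu(z),
\end{equation*}
using $\int K_{r_1}(y,z)d\nu(y)=1$ and the $\hyp$-tail bound \eqref{e:L1norm} transferred to $M$ via \eqref{e:heatkernelinM} (applied with radius $\geq \cutchi r_2/4$ and time $r_1$), yields $\|(T-B_x)g_k\|\leq C_0\|\varphi_0\|\exp\bigl(-\cutchi^2 r_2^2/(256\,r_1)\bigr)$.

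\emph{Step 5 (conclusion).} Summing the Step~4 estimate over $0\leq k\leq n-1$ and using $r_2/r_1\to\infty$ as $\vol(M)\to\infty$ (so that the polynomial factor $n$ is absorbed in the super-exponential decay), the total telescoping error is dominated by $C_0\exp(-\cutchi^2 r_2/32)$. Finally,
\begin{equation*}
\|B_x^n(\chi'_x\varphi_0)\|\leq \|\chi'_x\varphi_0\|\cdot\sup_{\|\varphi\|=1}\|B_x^n\varphi\|\leq C_0\sup_{\|\varphi\|=1}\|B_x^n\varphi\|,
\end{equation*}
together with Steps 1--2, yields the claimed bound.

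\textbf{Main obstacle.} The delicate point is Step~4: tracking the spatial localization of $g_k$ at every iteration and showing that the leakage $\|(T-B_x)g_k\|$ decays fast enough in spite of the infinite speed of propagation of the heat kernel. This relies on (i) an inductive Gaussian-type tail bound on $g_k$ using the semigroup property, and (ii) the buffer $\cutchi r_2/4$ guaranteed by $\cutchi\geq 32\sqrt{|b|}+16$, which lets the leaking mass travel a distance linear in $\cutchi r_2$ before meeting $(1-\chi_x)$.
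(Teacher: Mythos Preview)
Your overall plan (telescoping plus heat-kernel tail bounds) matches the paper's, and Steps 1--3 and 5 are sound. The gap is in Step 4.

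With a fixed radius increment $\cutheat r_1$ per step, each application of $e^{r_1\Delta}$ inside $B_x$ contributes an $L^2$ tail error of order $\exp(-\alpha r_1)$ for some fixed $\alpha>0$ (this is what \eqref{e:L1norm} gives at distance $\cutheat r_1$ and time $r_1$). Since $\|B_x\|\le 1$ but not strictly $<1$, these errors accumulate \emph{additively} over the $n\sim r_2/r_1$ iterations, so the ``essential support'' error for $g_k$---and hence the part of $\sum_k\|(T-B_x)g_k\|$ coming from mass of $Pg_k$ near $\partial\,\supp(\chi_x)$---is of order $n\,e^{-\alpha r_1}$. With $r_1=c\log\log\vol(M)$ this is only a (possibly positive) power of $\log\vol(M)$, vastly larger than the required $\exp(-\cutchi^2 r_2/32)=\vol(M)^{-c\cutchi^2/32}$. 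Enlarging $\cutheat$ does not help: the per-step exponent scales with $r_1$, never with $r_2$. The underlying issue is that a step-by-step hard cutoff loses a factor $n$ in the Gaussian exponent compared with the true diffusive tail $\exp(-s^2/(4nr_1))$ one would get from a single heat kernel at time $nr_1$.

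What is missing is a device to recover this semigroup-type tail despite the interleaved $P$'s. The paper supplies it via two choices you did not make: it telescopes in the \emph{opposite} order, putting the uncut factor $(e^{r_1\Delta}P)^{l+1}\delta_x$ next to $\delta_x$, and it proves the pointwise domination
\[
|Pe^{t\Delta}f|(y)\ \le\ \bigl(e^{t\Delta}+C_4\,e^{(t+1)\Delta}\bigr)|f|(y),
\]
obtained from $\sum_{k\in\net}\psi_k(y)\psi_k(w)\le C_4 K_1(y,w)$. Iterating yields $|(e^{r_1\Delta}P)^{l} f|\le(e^{r_1\Delta}+C_4 e^{(r_1+1)\Delta})^{l}|f|$, a binomial sum of genuine heat kernels at times $\sim lr_1$, whose $L^1$ tail outside $B_d(x,\cutchi r_2)$ is then read off directly from \eqref{e:L1norm} as $\exp\bigl(-\cutchi^2 r_2^2/(O(1)\cdot l r_1)\bigr)\le\exp\bigl(-\cutchi^2 r_2/O(1)\bigr)$. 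Your Step 4 would need an analogous mechanism to be made rigorous.
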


The proof of Lemma \ref{l:comparedeltatophi} relies on the following intermediate result.
\begin{lemma}\label{l:simoooon}
There exists $C_0>0$ such that if $\cutchi\geq 32\sqrt{|b|}+16$, then for any $x\in M$,
$$\|\left(Pe^{r_1\Delta}P\right)^{\lfloor r_2/r_1\rfloor+1}\delta_x - \left(P\chi_xe^{r_1\Delta}\chi_xP\right)^{\lfloor r_2/r_1\rfloor+1} \delta_x\| \leq C_0\exp\left(-\frac{\cutchi^2r_2}{32}\right).$$
\end{lemma}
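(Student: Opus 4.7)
The plan is to compare $T^{n+1}\delta_x$ and $B^{n+1}\delta_x$ (with $T := Pe^{r_1\Delta}P$, $B := P\chi_xe^{r_1\Delta}\chi_xP$, and $n := \lfloor r_2/r_1\rfloor$) by a telescoping argument, then bound each error term using the heat kernel tail estimates of Section~\ref{s:heathyp}. As a preliminary, each $\psi_k$ is supported in the single Voronoi cell $V_k$ and $\text{supp}(\chi_x)$ is a union of Voronoi cells, so $\chi_x\psi_k\in\{0,\psi_k\}$; hence $[\chi_x,P]=0$, which gives $B=\chi_x T\chi_x$. Combined with $\chi_x\delta_x=\delta_x$ and $\chi_x^2=\chi_x$, an induction yields $B^{n+1}\delta_x=(\chi_x T)^{n+1}\delta_x$.

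\textbf{Step 1 (Telescoping).} The interpolating sequence $D_j:=(\chi_x T)^j T^{n+1-j}\delta_x$, $j=0,\dots,n+1$, satisfies $D_0=T^{n+1}\delta_x$, $D_{n+1}=B^{n+1}\delta_x$, and
$$D_j-D_{j+1}=(\chi_x T)^j(I-\chi_x)T^{n+1-j}\delta_x.$$
Using $\|\chi_x T\|\leq 1$ and the triangle inequality,
$$\|T^{n+1}\delta_x-B^{n+1}\delta_x\|\leq\sum_{k=1}^{n+1}\|(I-\chi_x)T^k\delta_x\|.$$

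\textbf{Step 2 (Bounding each term).} Using $[\chi_x,P]=0$ and $\|P\|\leq 1$,
$$(I-\chi_x)T^k\delta_x=P(I-\chi_x)e^{r_1\Delta}(PT^{k-1}\delta_x),$$
so it suffices to bound $\|(I-\chi_x)e^{r_1\Delta}f\|$ with $f=PT^{k-1}\delta_x$. Since $\text{supp}(\chi_x)\supset B_d(x,\cutchi r_2)$, we have $\text{supp}(I-\chi_x)\subset M\setminus B_d(x,\cutchi r_2)$. An inductive argument in $k$ shows that $T^{k-1}\delta_x$ is effectively concentrated in a ball of radius $\sim C kr_1$ around $x$, with exponentially small tails: each application of $T$ spreads the essential support by $O(r_1)$ via the heat kernel, while the projection $P=I-Q$ only introduces linear combinations of $\psi_j$ whose coefficients $\langle T^{k-1}\delta_x,\psi_j\rangle=\langle\delta_x,T^{k-1}\psi_j\rangle$ decay in $d(x,v_j)$ by self-adjointness and iterated heat kernel tail bounds. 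For $\cutchi\geq 32\sqrt{|b|}+16$ and $k\leq n+1\leq r_2/r_1+1$, the gap between the concentration region of $f$ and $\text{supp}(I-\chi_x)$ is of order $\cutchi r_2$. Applying the decomposition $\|(I-\chi_x)e^{r_1\Delta}f\|_{L^2}^2\leq\|e^{r_1\Delta}f\|_{L^\infty}\,\|(I-\chi_x)e^{r_1\Delta}f\|_{L^1}$ with the $L^\infty$ bound of Lemma~\ref{l:heatkernelinM} and the $L^1$ tail bound \eqref{e:L1norm} of Lemma~\ref{l:heatkernelesthyp} (transferred from $\hyp$ to $M$ via the covering projection, as in \eqref{e:heatkernelinM}) yields
$$\|(I-\chi_x)T^k\delta_x\|\leq C_0\, e^{O(|b|r_1)}\exp\Bigl(-\frac{(\cutchi r_2)^2}{16r_1}\Bigr)\leq C_0'\exp\Bigl(-\frac{\cutchi^2 r_2}{32}\Bigr),$$
where the last step uses $r_2\geq r_1$ (so $r_2^2/r_1\geq r_2$) and absorbs the polylogarithmic factor $e^{O(|b|r_1)}=(\log\vol M)^{O(1)}$ into the exponential.

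\textbf{Step 3 (Summation).} The number of terms $n+1\leq r_2/r_1+1$ is polylogarithmic in $\vol(M)$, so summing and adjusting constants yields the stated bound $C_0\exp(-\cutchi^2r_2/32)$.

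\textbf{Main obstacle.} The inductive concentration estimate in Step 2 is the delicate point. The non-local projection $Q=I-P$ can a priori spread mass onto any cell $V_j$ for $j\in\net$ at each iteration, including those far from $x$. Controlling these long-range contributions requires showing, via self-adjointness $\langle T^{k-1}\delta_x,\psi_j\rangle=\langle\delta_x,T^{k-1}\psi_j\rangle$ and heat kernel tail estimates, that the weights decay sufficiently rapidly in $d(x,v_j)$, and that this decay persists through all $n+1$ iterations with a controlled accumulation of error absorbable into the final exponential.
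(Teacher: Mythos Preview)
Your telescoping in Step~1 is correct and matches the paper's decomposition (the paper writes the same sum in the form $\sum_{l}(P\chi_xe^{r_1\Delta})^{n-l-1}P(1-\chi_x)(e^{r_1\Delta}P)^{l+1}\delta_x$, which yields the same terms after using $[P,\chi_x]=0$). Step~3 is also fine. The gap is in Step~2, and it is precisely the point you yourself flag as the ``Main obstacle'': you have not proved the inductive concentration estimate, and the mechanism you propose for it is circular.

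Specifically, you claim that the coefficients $\langle T^{k-1}\delta_x,\psi_j\rangle$ decay in $d(x,v_j)$ ``by self-adjointness and iterated heat kernel tail bounds'', rewriting them as $\langle\delta_x,T^{k-1}\psi_j\rangle=(T^{k-1}\psi_j)(x)$. But this is the same quantity you started with, just with the roles of the localized source ($\delta_x$) and the localized test ($\psi_j$) exchanged: estimating $(T^{k-1}\psi_j)(x)$ for $v_j$ far from $x$ is exactly the problem of showing that $T^{k-1}$ applied to a localized function has small far-away values. Self-adjointness gives you nothing here. Likewise, your displayed bound with $\exp(-(\cutchi r_2)^2/(16r_1))$ presupposes that $f=PT^{k-1}\delta_x$ is concentrated in a ball of radius $o(\cutchi r_2)$ around $x$; without that, the $L^1$ tail estimate \eqref{e:L1norm} at time $r_1$ says nothing about $(1-\chi_x)e^{r_1\Delta}f$.

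The paper closes this gap with a single clean device that you are missing: a \emph{pointwise domination} of $Pe^{t\Delta}$ by a positive-kernel operator. Using that the $\psi_k$ are bounded, have disjoint supports of diameter $\leq 2$, and that the time-$1$ heat kernel is bounded below on such scales, one shows $\sum_k\psi_k(y)\psi_k(w)\leq C_4K_1(y,w)$, whence
\[
|Pe^{t\Delta}f(y)|\leq\bigl(e^{t\Delta}+C_4e^{(t+1)\Delta}\bigr)|f|(y)
\]
pointwise. Iterating gives $|(e^{t\Delta}P)^{l}f|\leq(e^{t\Delta}+C_4e^{(t+1)\Delta})^{l}|f|$, which is a finite sum of genuine heat propagators at times in $[lt,l(t+1)]$. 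This removes $P$ from the problem entirely and reduces everything to the tail estimate \eqref{e:L1norm} applied at the total time $\sim(l+1)r_1\leq 2r_2$, with the cutoff at distance $\cutchi r_2$. The binomial expansion produces a harmless factor $(1+C_4)^{l}$, which is sub-exponential in $r_2$ and absorbed as you do in Step~3. Without this domination (or an equivalent device), the induction on $k$ that you need in Step~2 does not go through as written.
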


\begin{remark}
The cut-offs $\chi_x$ are introduced to overcome the difficulty caused by the infinite speed of propagation of the heat kernel. We expect that most of the mass of $(Pe^{r_1\Delta}P)^{\lfloor r_2/r_1\rfloor+1}\delta_x$ is contained in a ball of radius $\cutchi r_2$. Lemma \ref{l:simoooon} proves that the remainder term coming from the complement of $\supp(\chi_x)$ is small.
\end{remark}

\begin{proof}[Proof of Lemma \ref{l:simoooon}]
    Write $n:=\lfloor r_2/r_1\rfloor+1$.  We first notice that the difference $\left(Pe^{r_1\Delta}P\right)^n\delta_x - \left(P\chi_xe^{r_1\Delta}\chi_xP\right)^n\delta_x$ is equal to the telescopic sum
    \begin{equation}
     \sum_{l=0}^{n-1}\left(P\chi_xe^{r_1\Delta}\right)^{n-l-1}P\left(1 - \chi_x\right)\left(e^{r_1\Delta}P\right)^{l+1}\delta_x \label{Eq: telescoping sum}
    \end{equation}
    (to show this, write $P(1-\chi_x)=P-P\chi_x$, recall that $[P,\chi_x]=0$, and after removing telescoping terms, use $\chi_x\delta_x=\delta_x$).
    
    Next, we estimate the norm of each term of \eqref{Eq: telescoping sum} individually. For $n-1 \geq l \geq 0$, the quantity 
    $$ ||\left(P\chi_xe^{r_1\Delta}\right)^{n-l-1}P\left(1 - \chi_x\right)\left(e^{r_1\Delta}P\right)^{l+1}\delta_x||$$
    is bounded above by 
    \begin{equation} \label{e:onveutunebornesup}
    ||\left(1 - \chi_x\right)\left(e^{r_1\Delta}P\right)^{l+1}\delta_x||
    \end{equation}
    because $e^\Delta$, $\chi_x$ and $P$ have operator norm $1$.  We start by estimating this last  quantity when $l=0$. We remark first of all that 
    \begin{equation} ||(1-\chi_x)e^{r_1\Delta}P\delta_x|| \leq ||(1-\chi_x)e^{\left(r_1-1\right)\Delta}\left|e^{\Delta}P\delta_x\right||| \label{Eq: trianle ineq}\end{equation}
    according to the triangle inequality. In order to make sense of the right-hand side, notice that $e^{\Delta}P\delta_x$ is equal to the element $\phi$ of $L^2(M,\nu)$ defined by
    $$ \phi: y \longmapsto K_1(y,x) - \psi_{k_x}(x)\int_MK_1(y,z)\psi_{k_x}(z)d\nu(z)$$
    Here $k_x$ denotes the unique $k\in \net$ such that $\psi_{k}(x) \neq 0$ if it exists (then $k_x$ is automatically unique), and $\psi_{k_x}=0$ otherwise. The notation $|e^\Delta P\delta_x|$ then corresponds to the absolute value of $\phi$. 
    
    We provide now an estimate of the right-hand side of \eqref{Eq: trianle ineq}. As a general fact, we have for any $t>0$,
    \begin{align*}
    ||(1-\chi_x)e^{t\Delta}\left|e^{\Delta}P\delta_x\right|||   \leq ||\left(1 - \chi_x\right)e^{t\Delta}K_1(\cdot,x)|| + C_0||\left(1 - \chi_x\right)e^{t\Delta}(e^\Delta\psi_{k_x})||
    \end{align*}
    where we have used that $|\psi_{k_x}(x)|\leq C_0$ for any $x$ (see Section \ref{s:voronoi}).
We have moreover according to \eqref{e:defcutchi}
    \begin{equation} \label{e:Atk1}
    ||\left(1 - \chi_x\right)e^{t\Delta}K_1(\cdot,x)|| \leq ||K_{t+1}(\cdot,x)||_{L^2(M\setminus B_d(x, \cutchi r_2))}.
    \end{equation}
Now, since the $\psi_k$'s are bounded above by $C_0$ and supported on subsets of diameter bounded above by $2$, using \eqref{e:heatkernelinM} and Lemma \ref{l:comparisonheat} we obtain that there is a constant $C_1$ such that $K_1(y,x)\geq k_1(d(x,y))\geq k_1^{(b)}(d(x,y)) \geq C_1 \psi_{k_x}(y)$. Hence,
\begin{align*} 
||\left(1 - \chi_x\right)e^{t\Delta}(e^\Delta\psi_{k_x})|| \leq C_1||\left(1 - \chi_x\right)e^{(t+1)\Delta} K_1(\cdot, x)||\leq  C_1||K_{t+2}(\cdot,x)||_{L^2(M\setminus B_d(x, \cutchi r_2))}
\end{align*}
According to Lemma \ref{l:heatkernelinM},
    \begin{align}
        ||K_{t+2}(\cdot,x)||_{L^2(M\setminus B_d(x, \cutchi r_2))} &\leq C_{2}\exp(2|b|t)||K_{t+2}(\cdot,x)||^{1/2}_{L^1(M\setminus B_d(x, \cutchi r_2))}\nonumber \\
        &\leq C_{2}\exp(2|b|t)||k_{t+2}(\cdot,x)||^{1/2}_{L^1(\widetilde{M}\setminus B_{\widetilde{M}}(x, \cutchi r_2))}\nonumber
    \end{align} 
where in the last line we used \eqref{e:heatkernelinM}.
The same argument applied to the right-hand side of \eqref{e:Atk1} yields the same bound, at time $t+1$. From now on, we assume $t\in[0,2r_2]$. Using the heat kernel estimate \eqref{e:L1norm} applied with $\cutheat=\cutchi r_2/(t+2)\geq 8\sqrt{|b|}+4$ and $\cutheat=\cutchi r_2/(t+1)\geq 8\sqrt{|b|}+4$ we thus have 
    \begin{equation}\label{Eq: estimate Chix with hk}
    ||(1-\chi_x)e^{t\Delta}\left|e^\Delta P\delta_x\right||| \leq C_3\exp\left(\frac52 |b|t-\frac{\cutchi^2r_2^2}{8(t+2)}\right).
    \end{equation}
Applying to $t=r_1-1$ and combining with \eqref{Eq: trianle ineq} we get an upper bound for \eqref{e:onveutunebornesup} for $l=0$.    
    
    We turn now to the case $l \geq 1$. For any $f \in L^2(M,\nu)$, $t \geq 0$ and $y \in M$ we have 
    \begin{align*}
        |Pe^{t\Delta}f(y)|
                         & \leq e^{t\Delta}|f|(y) + \sum_{k\in\net} \psi_k(y) \int_{M\times M} \psi_k(w)K_t(w,z)|f(z) |d\nu(w)d\nu(z).
    \end{align*}
    The $\psi_k$'s are bounded by $C_0$ and supported on pairwise disjoint subsets of diameter bounded above by $2$. Therefore, there is a constant $C_4 > 0$ such that for any $y,w\in M$,
    $$ \sum_{k\in\net} \psi_k(y)\psi_k(w)\leq C_4 k_1^{(b)}(d_{\widetilde{M}}(y,w)) \leq C_4 k_1(y,w) \leq C_4 K_1(y,w)$$
   (where we used Lemma \ref{l:comparisonheat} and \eqref{e:heatkernelinM}). This yields    
    \begin{align*}
        \sum_{k\in\net} \psi_k(y) \int_M \psi_k(w)K_t(w,z)|f(z) |d\nu(w)d\nu(z) 
              & \leq C_4e^{\left(t+1\right) \Delta}|f|(y).
    \end{align*}
    Hence, 
    \begin{equation}|Pe^{t\Delta}f(y)| \leq (e^{t\Delta} + C_4 e^{\left(t+1\right)\Delta})|f|(y). \label{Eq: upper bound kernel + proj}\end{equation}
    A simple induction then shows that,
    $$ |P\left(e^{t\Delta}P\right)^{l-1}f(y)| \leq (e^{t\Delta} + C_4 e^{\left(t+1\right)\Delta})^{l-1}|Pf|(y).$$
    Applying the above with $t=r_1$ and $f=e^{r_1\Delta}P\delta_x$ yields
    \begin{align*}||\left(1 - \chi_x\right)\left(e^{r_1\Delta}P\right)^{l+1}\delta_x|| 
    &\leq ||(1- \chi_x) e^{r_1\Delta}(e^{r_1\Delta} + C_4e^{\left(r_1 + 1\right)\Delta})^{l-1}|Pe^{r_1\Delta}P\delta_x|||.
    \end{align*}
    Applying \eqref{Eq: upper bound kernel + proj} with $t=r_1 - 1$ and $f= e^\Delta P\delta_x$ we find 
    $$ |Pe^{r_1\Delta}P\delta_x|(y) \leq (e^{\left(r_1-1\right)\Delta} + C_4 e^{r_1\Delta })|e^\Delta P\delta_x|(y)$$
    for all $y \in M$.  So the triangle inequality yields
    \begin{align}
        ||\left(1 - \chi_x\right)\left(e^{r_1\Delta}P\right)^{l+1}\delta_x|| 
        & \leq \sum_{j=0}^{l} \binom{l}{j} C_4^{j}||(1 - \chi_x)e^{\left((l+1)r_1 + j - 1\right)\Delta}\left|e^\Delta P\delta_x\right| ||. \label{Eq: Upper bound heat with P}
    \end{align}
    All in all, we have, using successively \eqref{Eq: Upper bound heat with P} and \eqref{Eq: estimate Chix with hk} (since $(l+1)r_1 + j + 1 \leq (l+1)(r_1+1)\leq 2r_2$) 
    \begin{align}
    ||\left(P\chi_xe^{r_1\Delta}\right)^{n-l-1}P\left(1 - \chi_x\right)\left(e^{r_1\Delta}P\right)^{l+1}\delta_x|| & \leq \sum_{j=0}^{l} \binom{l}{j} C_4^{j}||(1 - \chi_x)e^{\left((l+1)r_1 + j  - 1\right)\Delta}\left|e^\Delta P\delta_x\right| || \nonumber\\
    &\leq \sum_{j=0}^{l} \binom{l}{j}C_4^{j} C_3\exp\left(\frac52 |b|r_2-\frac{\cutchi^2r_2^2}{8\left(l+1\right)\left(r_1 + 1 \right)}\right) \nonumber\\
    & =(1+C_4)^{l}C_3 \exp\left(\frac52 |b|r_2-\frac{\cutchi^2r_2^2}{8\left(l+1\right)\left(r_1 + 1 \right)}\right). \label{Eq: Estimates diff cutoff 1}
    \end{align}
Finally, combining \eqref{Eq: telescoping sum} with the estimate \eqref{Eq: Estimates diff cutoff 1} and the estimate on the $l=0$ term, we find 
    \begin{align}
        &||\left(\left(Pe^{r_1\Delta}P\right)^n - \left(P\chi_xe^{r_1\Delta}\chi_xP\right)^n \right)\delta_x||\nonumber\\
        & \qquad \leq  C_3\exp\left( \frac52 |b|r_2-\frac{\cutchi^2r_2^2}{8(r_1+1)}\right) + \sum_{l=1}^{n-1} (1+C_4)^{l}C_3 \exp\left( \frac52 |b|r_2-\frac{\cutchi^2r_2^2}{8\left(l+1\right)\left(r_1 + 1 \right)}\right)\label{e:22} \\
        &\qquad \leq C_3\exp\left(\frac52 |b|r_2-\frac{\cutchi^2r_2^2}{8(r_1+1)}\right)+ C_3n(1+C_4)^{n} \exp\left(\frac52 |b|r_2-\frac{\cutchi^2r_2^2}{8n\left(r_1 + 1 \right)}\right)\label{e:33}\\
         &\qquad \leq C_5\exp\left(-\frac{\cutchi^2r_2}{32}\right)\label{e:44}
    \end{align}
    where we have estimated all terms of the sum by the maximal one in going from \eqref{e:22} to \eqref{e:33}, and we used that $n(r_1+1) \leq 2r_2$ and that $n(1 + C_4)^n$ is negligible compared to $\exp(|b|r_2/2)$ to go from \eqref{e:33} to \eqref{e:44} (together with the fact that $\cutchi\geq 32\sqrt{|b|}+16$). 
  \end{proof}

The end of this section is devoted to the proof of Lemma \ref{l:comparedeltatophi}.
\begin{proof}[Proof of Lemma \ref{l:comparedeltatophi}]
Notice that the operator $P\chi_x e^{r_1\Delta} \chi_x P$ is compact as an operator on $L^2(M,\nu)$, as a composition of bounded (linear) operators with the compact operator $e^{r_1\Delta}$.
Now we notice that
$$
\|(Pe^{r_1\Delta}P)^{\lfloor r_2/r_1\rfloor+1}\delta_x\|\leq  \|(P\chi_xe^{r_1\Delta}\chi_xP)^{\lfloor r_2/r_1\rfloor+1}\delta_x\|+C_0 \exp\left(-\frac{\cutchi^2 r_2}{32}\right)
$$
for some $C_0>0$ due to Lemma \ref{l:simoooon}. Then we set $B_x=P\chi_xe^{r_1\Delta}\chi_xP$. We prove below that $\|B_x\delta_x\|\leq C_1$ for some $C_1>0$ (as always, depending only on $(a,b,\rho)\in\S$). Therefore,
\begin{align}
\|(Pe^{r_1\Delta}P)^{\lfloor r_2/r_1\rfloor+1}\delta_x\|&\leq \|B_x^{\lfloor r_2/r_1\rfloor}B_x\delta_x\|+C_0 \exp\left(-\frac{\cutchi^2 r_2}{32}\right)\nonumber\\
&\leq C_1\sup_{\|\varphi\|=1}\|B_x^{\lfloor r_2/r_1\rfloor}\varphi\|+C_0\exp\left(-\frac{\cutchi^2 r_2}{32}\right),\nonumber
\end{align}
which is exactly \eqref{e:deltaxetphiavecreste}.
There remains to justify that $\|B_x\delta_x\|\leq C_1$. We first have
\begin{equation}\label{e:onyestpresque1}
\|B_x\delta_x\|\leq \|e^\Delta \chi_x P\delta_x\|
\end{equation}
since the operator norms of $P$, $\chi_x$ and $e^{(r_1-1)\Delta}$ are all equal to $1$. If we denote by $k_x$ the only $k\in\net$ such that $x \in V_{k_x}$ when it exists (and $\psi_{k_x}=0$ otherwise), then by definition of $P$ we have
\begin{equation}\label{e:APdeltax}
\|e^{\Delta}\chi_x P\delta_x\|=  \|e^{\Delta} P\chi_x\delta_x\| \leq \|e^{\Delta}\delta_x\|+ C_0\|e^{\Delta}\psi_{k_x}\|  \leq \|e^{\Delta}\delta_x\| + C_0
\end{equation}
where we used that $\|\psi_k\|=1$ and that the operator norm of $e^\Delta :L^2(M,\nu)\rightarrow L^2(M,\nu)$ is equal to $1$.
Using \eqref{e:heatkernelinM}, we get
\begin{equation}\label{e:onyestpresque2} \|e^{\Delta}\delta_x\|^2=\|K_1(x,\cdot)\|^2 \leq \|K_1(x,\cdot)\|_{L^\infty}\|K_1(x,\cdot)\|_{L^1}= \|K_1(x,\cdot)\|_{L^\infty} \leq C_2
\end{equation}
where the last inequality is a consequence of Lemma \ref{l:heatkernelinM}. Combining \eqref{e:onyestpresque1}, \eqref{e:APdeltax}, \eqref{e:onyestpresque2}, we get $\|B_x\delta_x\|\leq C_1$, which concludes the proof.
\end{proof}

\subsection{Applications of the min-max}\label{s:applicationsminmax}
Our proof of Theorem \ref{t:volumebound} relies fundamentally on the  Courant–Fischer min-max lemma (see e.g. \cite[Theorem XIII.1]{reed}) through the lemma proved in this section. In all the sequel, 
\begin{equation*}
\mu_2=e^{-\lambda_2(M)}
\end{equation*}
denotes the largest eigenvalue of $e^{\Delta}$ strictly smaller than $1$. 

As in Section \ref{s:errorterms}, the main quantity of interest in this section is, for any fixed $x\in M$,
\begin{equation}\label{e:defofphix}
\sup_{\|\varphi\|=1}\|P\chi_xe^{r_1\Delta}\chi_xP\varphi\|.
\end{equation}
We denote by $\varphi_x\in L^2(M,\nu)$ a function which attains the supremum \eqref{e:defofphix}, i.e., 
\begin{equation}\label{e:defvarphixxx}
\|P\chi_xe^{r_1\Delta}\chi_xP\varphi_x\|=\sup_{\|\varphi\|=1}\|P\chi_xe^{r_1\Delta}\chi_xP\varphi\|, \qquad \|\varphi_x\|=1.
\end{equation}
By the min-max principle $\varphi_x$ is an eigenfunction of the compact, self-adjoint and non-negative operator $P\chi_xe^{r_1\Delta}\chi_xP$. Since $[\chi_x,P]=0$ and $P$ and the operator of multiplication by $\chi_x$ are orthogonal projections,  the eigenfunction $\varphi_x$ verifies \begin{equation}\label{e:eqsatisfiedbyphix}
P\varphi_x=\varphi_x \quad \text{and} \quad \chi_x\varphi_x=\varphi_x.
\end{equation} 

The following result serves as a replacement for the bound on the set $U$ in the proof of \cite[Theorem 2.2]{equiangular}. 
\begin{lemma}\label{l:defofS}
Assume that $\frac{\cutchi^2}{16}\geq 5|b|+2\cutchi \sqrt{|b|}+\cutchi$. Then there exists $C_0,C_2>0$ and a subset $S\subset M$ of area $\nu(S)\leq C_0 \exp(3\cutchi r_2\sqrt{|b|})$ such that for any $x\notin S$, 
\begin{equation}\label{e:argU0}
\|e^{r_1\Delta}|\varphi_x|\|^2\leq \mu_2^{2r_1}+C_2\exp(-\cutchi r_2).
\end{equation} 
\end{lemma}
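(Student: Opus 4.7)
The plan is to combine a spectral decomposition of $e^{r_1\Delta}$ (using the min-max to isolate the constant eigendirection) with a Markov-type estimate applied to the function $x \mapsto (\int_M|\varphi_x|)^2$.

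\smallskip
\textbf{Step 1: spectral bound on $\|e^{r_1\Delta}|\varphi_x|\|^2$.} The top eigenspace of $e^{r_1\Delta}$ is spanned by $\mathbf{1}_M/\sqrt{\vol(M)}$ with eigenvalue $1$, and the second-largest eigenvalue is $\mu_2^{r_1}$. I would write $|\varphi_x|=c_x+h_x$, where $c_x=\vol(M)^{-1}\int_M|\varphi_x|$ is constant and $h_x\perp \mathbf{1}_M$. Since $\|h_x\|^2\leq \||\varphi_x|\|^2=1$, the min-max gives
$$
\|e^{r_1\Delta}|\varphi_x|\|^2 = c_x^2\vol(M) + \|e^{r_1\Delta}h_x\|^2 \leq \frac{(\int_M|\varphi_x|\,d\nu)^2}{\vol(M)} + \mu_2^{2r_1}.
$$

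\smallskip
\textbf{Step 2: definition of $S$.} I would define
$$
S := \Bigl\{x\in M :\ \bigl(\textstyle\int_M|\varphi_x|\,d\nu\bigr)^2 > C_2\vol(M)\exp(-\cutchi r_2)\Bigr\},
$$
so that \eqref{e:argU0} follows immediately from Step 1 whenever $x\notin S$.

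\smallskip
\textbf{Step 3: Markov bound on $\nu(S)$.} From \eqref{e:eqsatisfiedbyphix}, $\varphi_x=\chi_x\varphi_x$; hence $\varphi_x$ is supported in $\supp(\chi_x)$. Any $V_k$ ($k\in\net$) intersecting $B_d(x,\cutchi r_2)$ satisfies $V_k\subset B_d(v_k,1)$ and $d(v_k,x)\leq \cutchi r_2+1$, so $\supp(\chi_x)\subset B_d(x,\cutchi r_2+2)$. Using $\|\varphi_x\|=1$ and Cauchy--Schwarz, then lifting to $\hyp$ and applying \eqref{e:volumeballmodel},
$$
\Bigl(\int_M|\varphi_x|\Bigr)^2 \leq \vol(\supp\chi_x) \leq \vol_{\hyp}(B_{\hyp}(\cutchi r_2+2)) \leq \frac{4\pi}{|b|}\sinh^2\!\!\left(\tfrac{\sqrt{|b|}(\cutchi r_2+2)}{2}\right).
$$
Integrating over $M$ and applying Markov's inequality to the definition of $S$,
$$
\nu(S) \leq \frac{\int_M(\int_M|\varphi_x|)^2\,d\nu(x)}{C_2\vol(M)\exp(-\cutchi r_2)} \leq \frac{C_0}{C_2}\exp\bigl((1+\sqrt{|b|})\cutchi r_2\bigr).
$$
The standing hypothesis $\cutchi^2/16 \geq 5|b|+2\cutchi\sqrt{|b|}+\cutchi$ forces $\cutchi$ to be large enough (in particular $\cutchi\geq 32\sqrt{|b|}+16$) to absorb the multiplicative constants and to weaken the exponent into the required $3\sqrt{|b|}\cutchi r_2$.

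\smallskip
\textbf{Main obstacle.} The delicate point is matching the exponent $(1+\sqrt{|b|})$ produced by the naive Markov computation to the claimed $3\sqrt{|b|}$, which is straightforward only when $\sqrt{|b|}\geq 1/2$. In the complementary regime I would need to sharpen the pointwise bound on $\int_M|\varphi_x|$ by exploiting the zero-mean constraint $\int_{V_k}\varphi_x=0$ on each cell (a direct consequence of $P\varphi_x=\varphi_x$, since $\psi_k$ is a normalized indicator of $V_k$), which forces $\varphi_x$ to cancel on each Voronoi cell in its support. Together with the assumption on $\cutchi$, this local cancellation should absorb the gap between $(1+\sqrt{|b|})$ and $3\sqrt{|b|}$ and yield the stated estimate.
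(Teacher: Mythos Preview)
Your Steps 1 and 2 are clean and correct: the spectral decomposition isolates the constant mode of $|\varphi_x|$ and reduces the task to controlling $(\int_M|\varphi_x|)^2/\vol(M)$. However, Step 3 leaves a genuine gap that you flag but do not close. The pointwise estimate $(\int_M|\varphi_x|)^2 \leq \vol(\supp\chi_x) \leq C_0\exp(\sqrt{|b|}\cutchi r_2)$ is \emph{uniform} in $x$, so Markov only yields $\nu(S) \leq (C_0/C_2)\exp((1+\sqrt{|b|})\cutchi r_2)$, and this is dominated by the claimed $\exp(3\sqrt{|b|}\cutchi r_2)$ only when $|b| \geq 1/4$. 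Your proposed remedy via the cell-wise constraint $\int_{V_k}\varphi_x = 0$ cannot recover the shortfall: cancellation in $\varphi_x$ gives no control whatsoever on $\int_M|\varphi_x|$ --- already a function equal to $\pm 1$ on two halves of each cell has zero mean on every cell yet saturates the Cauchy--Schwarz bound you used. There is simply no mechanism in this line of argument to push the exponent from $1+\sqrt{|b|}$ down to $3\sqrt{|b|}$ when $|b|$ is small.

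The paper's proof is of a different nature and sidesteps this issue entirely. Rather than Markov, it shows that $S$ has bounded \emph{diameter}. If \eqref{e:argU0} failed at two points $x_1, x_2$ with $d(x_1,x_2) > 3\cutchi r_2 + 4$, then $|\varphi_{x_1}|$ and $|\varphi_{x_2}|$ have disjoint supports and the heat-kernel cross term $\langle e^{2r_1\Delta}|\varphi_{x_1}|,|\varphi_{x_2}|\rangle$ is $O(\exp(-\cutchi r_2))$ by the off-diagonal decay \eqref{e:L1norm}; one can then form a unit-norm, mean-zero combination $u|\varphi_{x_1}| + v|\varphi_{x_2}|$ whose image under $e^{r_1\Delta}$ has norm strictly greater than $\mu_2^{r_1}$, contradicting the variational characterization of $\mu_2$. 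Hence $\text{diam}(S) \leq 3\cutchi r_2 + 4$, and the bound $\nu(S)\leq C_0\exp(3\cutchi r_2\sqrt{|b|})$ follows from \eqref{e:volumeballmodel} with no case analysis on $|b|$.

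As a side remark: the weaker bound you do obtain, with exponent $(1+\sqrt{|b|})\cutchi r_2$, would still suffice for the downstream application in Section~\ref{s:proofofth} after a harmless adjustment of the constant $c$ in \eqref{e:choicec}, since only a sublinear-in-$\vol(M)$ contribution from $\nu(S)$ is needed there. But it does not prove the lemma in the form stated.
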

\begin{proof}
Assume by contradiction that it is possible to find $x_1, x_2\in M$ at distance $> 3 \cutchi r_2+4$ such that \eqref{e:argU0} does not hold, with $C_2=2C_0C_1$, where $C_0,C_1$ are the constants appearing respectively in  \eqref{e:L1norm} and Lemma \ref{l:heatkernelinM}. Then $\|e^{r_1\Delta}|\varphi_{x_1}|\|^2$ and $\|e^{r_1\Delta}|\varphi_{x_2}|\|^2$ are both strictly greater than $\mu_2^{2r_1}+C_2\exp(-\cutchi r_2)$. We show below that 
\begin{equation}\label{e:doctorant}
|\langle e^{2r_1\Delta}|\varphi_{x_1}|,|\varphi_{x_2}|\rangle|\leq C_0C_1  \exp(-\cutchi r_2).
\end{equation}
Before proving \eqref{e:doctorant}, let us explain how to finish the proof. Let $u,v\geq 0$ with $u^2+v^2=1$ such that $\varphi=u|\varphi_{x_1}|+v|\varphi_{x_2}|$ verifies $\int_M \varphi(x) d\nu(x)=0$. Then $\|\varphi\|=1$ since $\varphi_{x_1}$ and $\varphi_{x_2}$ have disjoint support (due to \eqref{e:eqsatisfiedbyphix}) and $\|e^{r_1\Delta}\varphi\|>\mu_2^{r_1}$, so $\varphi$ contradicts the min-max principle for $e^{2r_1\Delta}$ in $L^2(M,\nu)$. This proves \eqref{e:argU0} and shows that $S$ has diameter $\leq 3\cutchi r_2+4$.
Finally, take $x\in S$ (if not empty) and notice that $$\nu(S)\leq \nu(B_d(x,3\cutchi r_2+4))\leq \frac{4\pi}{|b|} \exp((3\cutchi r_2+4)\sqrt{|b|})$$
by \eqref{e:volumeballmodel}, which concludes the proof.\\
There remains to prove \eqref{e:doctorant}. For $y\in M$ such that $\chi_{x_2}(y)=1$, we have according to \eqref{e:L1norm} and Lemma \ref{l:heatkernelinM}
\begin{align*}
\int_M \chi_{x_1}(x) K_{2r_1}(x,y)^2d\nu(x)&\leq C_{1}\exp(4|b|t)\int_M \chi_{x_1}(x) K_{2r_1}(x,y)d\nu(x)\leq C_0C_{1} \exp\Bigl(5|b|r_2-\frac{\cutchi^2r_2}{16}\Bigr)
\end{align*}
since the supports of $\chi_{x_1}$ and $\chi_{x_2}$ are at distance $\geq \cutchi r_2$ (recall \eqref{e:defcutchi} and the fact that the $V_k$ have diameter at most $2$).
We deduce using Cauchy-Schwarz and $\|\varphi_{x_1}\|=\|\varphi_{x_2}\|=1$ that  
\begin{align*}
\langle e^{2r_1\Delta}|\varphi_{x_1}|,|\varphi_{x_2}|\rangle^2&\leq \int_{M\times M}\chi_{x_1}(x)\chi_{x_2}(y)K_{2r_1}(x,y)^2d\nu(x)d\nu(y)\\
&\leq C_0C_{1} \exp\Bigl(\sqrt{|b|}(C'r_2+1)+5|b|r_2-\frac{\cutchi^2r_2}{16}\Bigr)
\end{align*}
which implies \eqref{e:doctorant} if $\cutchi$ is chosen as in the statement.
\end{proof}

\subsection{Estimate of the gain $\varepsilon$}\label{s:gainestimates}
The heart of our proof, the ``gain of $\varepsilon$", corresponding to Step (v) in the strategy of proof in Section \ref{s:strategy}, is carried out in this section. 

At the beginning of the proof of Theorem \ref{t:volumebound} in Section \ref{s:proofofth}, we will prove using Section \ref{s:applicationsminmax} that $m'$ is controlled by the integral in $x$ of the quantity $\|P\chi_x e^{r_1\Delta}\chi_x P\varphi_x\|$ where $\varphi_x$ has been introduced in \eqref{e:defvarphixxx}.

In the present section we show that the quantity $\|P\chi_x e^{r_1\Delta}\chi_x P\varphi_x\|=\|P\chi_x e^{r_1\Delta}\varphi_x\|$ (see \eqref{e:eqsatisfiedbyphix}) is, in turn, bounded above by $(1-\varepsilon)\|e^{r_1\Delta}|\varphi_x|\|$ up to remainders, where $\varepsilon$ is ``not too small" (depending on the volume). This gain of a quantitative $\varepsilon$, combined with an upper bound on $\|e^{r_1\Delta}|\varphi_x|\|$ obtained through Lemma \ref{l:defofS}, is sufficient to conclude the proof of Theorem \ref{t:volumebound} in Section \ref{s:proofofth}.

The gain of $\varepsilon$ is proved by writing the identity (full details are provided in the proof of Lemma \ref{l:gain})
\begin{align*}
\|e^{r_1\Delta}|\varphi_x|\|^2-\|P\chi_x e^{r_1\Delta}\chi_xP\varphi_x\|^2& =\underbrace{4\langle e^{r_1\Delta}\varphi_+,e^{r_1\Delta}\varphi_-\rangle}_{:=G_1}+\underbrace{\sum_{k\in\net_x}(\langle e^{r_1\Delta}\varphi_+,\psi_k\rangle-\langle e^{r_1\Delta}\varphi_-,\psi_k\rangle)^2}_{:=G_2}\\
&\qquad+\|(1-\chi_x)e^{r_1\Delta}\varphi_x\|^2
\end{align*}
where $\varphi_\pm=\max(\pm\varphi_x,0)$ and $\net_x$ is introduced in \eqref{e:netx}. The key observation is that $G_1$ and $G_2$ play opposite roles. While $G_1$ quantifies the interaction (or lack thereof) between $e^{r_1\Delta}\varphi_+$ and $e^{r_1\Delta}\varphi_-$, $G_2$ measures the discrepancy between the mass left by $e^{r_1\Delta}\varphi_+$ and $e^{r_1\Delta}\varphi_-$ on the Voronoi cells $V_k$. Since the variations of $e^{r_1\Delta}\varphi_+$ and $e^{r_1\Delta}\varphi_-$ are very well controlled at scale $O(1)$ (see Lemma \ref{l:smallscaleinvar}, which is a consequence of \eqref{e:variationsalpha}), $e^{r_1\Delta}\varphi_+$ and $e^{r_1\Delta}\varphi_-$ must interact on the Voronoi cells $V_k$ in order for $G_2$ to be small. In turn, this prevents $G_1$ from being small.  In other words, $G_1 + G_2$ (hence $\varepsilon$) cannot be small. This will be expressed in practice as a lower bound for $G_1 + G_2$ in terms of the $L^2$-norm of $e^{r_1\Delta}|\varphi_x|$.

The following lemma, used in the end for $r=r_1$ but valid for any $r\geq 1$, illustrates the idea that the solutions of the heat equation at time $r$ do not vary too much over scales of size $\lesssim r$. We set 
\begin{equation}\label{e:w1w2}
w_{\cutheat}(r)=C_0\exp\left(-5(1+|b|)(1+\cutheat)r\right).
\end{equation}
where $C_0$ is given in \eqref{e:variationsalpha}. 
\begin{lemma}[Small scale invariance] \label{l:smallscaleinvar}
There exists $C_0 > 0$ such that for any $\cutheat \geq \max(8\sqrt{|b|}+4,16)$, any $r\geq 1$, and any  positive function $f$ with $\|f\|=1$ there exists $R\in C^0(M)$ with $$\|R\| \leq C_0 \exp\left(|b|r-\frac{\cutheat^2r}{64}\right)$$ and the inequality 
 $$
 e^{r\Delta}f(x)\geq w_{\cutheat}(r)(e^{r\Delta}f(x') -R(x'))\geq 0
 $$
holds for any $x,x'\in M$ at distance at most $4r+4$ from each other. 
\end{lemma}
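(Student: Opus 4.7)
The plan is to combine the formula \eqref{e:heatkernelinM} expressing $K_r$ on $M$ in terms of the heat kernel $k_r$ on $\hyp$ with the pointwise variation bound \eqref{e:variationsalpha}. Given $x,x'\in M$ with $d(x,x')\leq 4r+4$, lift a minimizing path joining them to fix points $\bar x,\bar x'\in\hyp$ with $d_{\hyp}(\bar x,\bar x')\leq 4r+4$. Unfolding the sum in \eqref{e:heatkernelinM} against the $\Gamma$-periodic lift $\tilde f\geq 0$ of $f$ yields the identity
$$ e^{r\Delta}f(z)=\int_{\hyp} k_r(\bar z,\tilde y)\,\tilde f(\tilde y)\,d\Vol_{\hyp}(\tilde y),\qquad z\in\{x,x'\}.$$

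Next, I would split the integral representing $e^{r\Delta}f(x)$ over the ball $B_{\hyp}(\bar x',\cutheat r)$ and its complement; since $\tilde f, k_r\geq 0$, discarding the exterior piece only decreases the integral. On the ball, apply \eqref{e:variationsalpha} with first argument $\tilde y$, second argument $\bar x'$, third argument $\bar x$: the hypotheses $d_{\hyp}(\tilde y,\bar x')\leq \cutheat r$ and $|d_{\hyp}(\tilde y,\bar x')-d_{\hyp}(\tilde y,\bar x)|\leq d_{\hyp}(\bar x,\bar x')\leq 4r+4$ are met, so the symmetry of $k_r$ yields $k_r(\bar x,\tilde y)\geq w_{\cutheat}(r)\,k_r(\bar x',\tilde y)$ throughout the ball. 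Setting
$$ R(x')=\int_{\hyp\setminus B_{\hyp}(\bar x',\cutheat r)} k_r(\bar x',\tilde y)\,\tilde f(\tilde y)\,d\Vol_{\hyp}(\tilde y),$$
which is independent of the choice of lift by $\Gamma$-equivariance, integration delivers the desired $e^{r\Delta}f(x)\geq w_{\cutheat}(r)(e^{r\Delta}f(x')-R(x'))$, and the nonnegativity $e^{r\Delta}f(x')-R(x')\geq 0$ is automatic from $\tilde f\geq 0$.

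To control $\|R\|_{L^2(M)}$, I would apply Cauchy--Schwarz in the integral defining $R(x')$:
$$ R(x')^2\leq \|k_r(\bar x',\cdot)\|_{L^1(\hyp\setminus B_{\hyp}(\bar x',\cutheat r))}\cdot\int_{\hyp}k_r(\bar x',\tilde y)\,\tilde f(\tilde y)^2\,d\Vol_{\hyp}(\tilde y).$$
The first factor is bounded by $C_0\exp(|b|r-\cutheat^2 r/16)$ via \eqref{e:L1norm} (this is where $\cutheat\geq 8\sqrt{|b|}+4$ enters), and the second factor equals $e^{r\Delta}(f^2)(x')$. Integrating over $x'\in M$ and using $\int_M e^{r\Delta}(f^2)\,d\nu=\|f\|^2=1$ (since $\int_M K_r(x',y)\,d\nu(x')=1$) yields $\|R\|^2\leq C_0\exp(|b|r-\cutheat^2 r/16)$, which is stronger than the stated bound. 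Continuity of $R$ follows from that of the heat kernel.

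The main subtlety is the direction of the comparison provided by \eqref{e:variationsalpha}: it controls $k_r$ as a function of its \emph{second} argument, whereas we need to compare $k_r(\bar x,\tilde y)$ and $k_r(\bar x',\tilde y)$, which is a variation in the \emph{first} argument. Using heat-kernel symmetry and, crucially, centering the ball at $\bar x'$ rather than $\bar x$ is what ensures both that the inequality goes in the right direction and that the remainder $R$ ends up being a function of $x'$, as required by the statement.
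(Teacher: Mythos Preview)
Your proof is correct and follows the same core strategy as the paper: split the heat propagation in $\hyp$ into a near part, where the pointwise comparison \eqref{e:variationsalpha} yields $k_r(\bar x,\tilde y)\geq w_{\cutheat}(r)k_r(\bar x',\tilde y)$, and a far remainder controlled via \eqref{e:L1norm}. Your execution is in fact a little cleaner than the paper's. By centering the truncation ball at $\bar x'$ rather than at $\bar x$, you can apply \eqref{e:variationsalpha} directly (with first argument $\tilde y$) and avoid the paper's intermediate inequality $K_r^{\leq \cutheat}(x,y)\geq w_{\cutheat}(r)K_r^{\leq \cutheat-8}(x',y)$ and the resulting $\cutheat\mapsto\cutheat-8$ shift; the nonnegativity $e^{r\Delta}f(x')-R(x')\geq 0$ is then automatic, whereas the paper enforces it at the end by redefining $R=\min(R_{>\cutheat-8},e^{r\Delta}f)$. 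Your Cauchy--Schwarz estimate for $\|R\|$, which integrates $e^{r\Delta}(f^2)$ over $M$, replaces the paper's Schur-test bound $\|R_{>\cutheat}\|\leq \sup_x\|K_r^{>\cutheat}(x,\cdot)\|_{L^1}$ and even gives a slightly better exponent ($\cutheat^2 r/32$ after the square root, versus the stated $\cutheat^2 r/64$). The only point worth a sentence of extra care is the continuity of $R$, since $f$ is merely $L^2$; the paper is equally brief here, and it follows from continuity of $\bar x'\mapsto k_r(\bar x',\cdot)$ in $L^1(\hyp)$ together with $\tilde f\in L^2_{\rm loc}(\hyp)$.
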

\begin{proof}
Recall that $e^{r\Delta}f(x)=\int_MK_{r}(x,y)f(y)dy$. For every $x,y \in M$ define $$K_{r}^{\leq \cutheat}(x,y):=\sum_{\gamma \in \Gamma,\ d_{\hyp}(\bar{x},\gamma \bar{y}) \leq \cutheat r} k_{r}(\bar{x},\gamma \bar{y})$$
and 
$$K_{r}^{> \cutheat}(x,y):=\sum_{\gamma \in \Gamma,\ d_{\hyp}(\bar{x},\gamma \bar{y}) > \cutheat r} k_{r}(\bar{x},\gamma \bar{y})$$
for any choice of lifts $\bar{x},\bar{y}$ of $x,y$ to $\hyp$. We have $K_{r}=K_{r}^{>C}+K_{r}^{\leq C}$ according to \eqref{e:heatkernelinM}.

For $x\in M$ we set
\begin{equation}\label{e:RC}
R_{>\cutheat}(x):= \int_{M} K_{r}^{>\cutheat}(x,y)f(y)d\nu(y).
\end{equation}
Notice that $R_{>\cutheat} \in C^{0}(M)$.
It follows from the Schur test that
\begin{equation}\label{e:rckc}
||R_{>\cutheat}||_{L^2} \leq \sup_{x \in M} ||K_{r}^{>\cutheat}(x, \cdot)||_{L^1}||f||_{L^2}.
\end{equation} 

Take $x \in M$ and choose $\bar{x} \in \hyp$ a lift of $x$. We have 
\begin{align}
||K_{r}^{>\cutheat}(x, \cdot)||_{L^1} 
= \int_{\hyp\setminus B_{\hyp}(\bar{x},\cutheat r)}k_{r}(\bar{x},\bar{y})d\bar{y}  \leq C_0 \exp\left(|b|r-\frac{\cutheat^2r}{16}\right)\label{e:c5}
\end{align}
according to \eqref{e:L1norm}. Besides, $K^{\leq \cutheat}$ has controlled variations in terms of $\cutheat$: we prove that for all $x, x', y \in M$ with $d(x,x') \leq 4r+4$ we have
\begin{equation}\label{e:estdeKr1}
K_{r}^{\leq \cutheat}(x,y) \geq w_{\cutheat}(r)K_{r}^{\leq \cutheat - 8}(x',y).
\end{equation}
For this, we fix lifts $\bar{x}$, $\bar{x}'$ of $x,x'$ at distance $\leq 4r+4$. Using that if $d_{\hyp}(\bar{x}',\gamma \bar{y})\leq (\cutheat-8)r$, then $d_{\hyp}(\bar{x},\gamma \bar{y})\leq  \cutheat r$, together with \eqref{e:variationsalpha} applied with $t=r$ we get
\begin{align*}
K_{r}^{\leq \cutheat}(x,y)&=\sum_{\gamma \in \Gamma, d_{\hyp}(\bar{x},\gamma \bar{y}) \leq \cutheat r} k_{r}(d_{\hyp}(\bar{x},\gamma \bar{y}))\\
&\geq w_{\cutheat}(r)\sum_{\gamma \in \Gamma, d_{\hyp}(\bar{x},\gamma \bar{y}) \leq \cutheat r} k_{r}(d_{\hyp}(\bar{x}',\gamma \bar{y}))\\
&\geq w_{\cutheat}(r)\sum_{\gamma \in \Gamma, d_{\hyp}(\bar{x}',\gamma \bar{y}) \leq (\cutheat-8)r} k_{r}(d_{\hyp}(\bar{x}',\gamma \bar{y}))= w_{\cutheat}(r)K_{r}^{\leq \cutheat-8}(x',y).
\end{align*}
We deduce from \eqref{e:estdeKr1} and positivity of the heat kernel
$$ e^{r\Delta}f(x) \geq w_{\cutheat}(r) (e^{r\Delta}f(x') -R_{>\cutheat-8}(x')).$$
Set $R=\min(R_{>\cutheat-8},e^{r\Delta}f)$. Combining \eqref{e:rckc}, \eqref{e:c5}  and the inequality $\cutheat-8 \geq \cutheat/2$ we obtain the lemma.
\end{proof}

For $f\in L^2(M,\nu)$ we set $f_\pm=\max(\pm f,0)$. The next lemma, when used for $r=r_1$, shows that the interaction $\langle e^{r_1\Delta}f_+,e^{r_1\Delta}f_-\rangle$ between positive and negative parts can already be detected coarsely on the Voronoi cells $V_k$. Its proof relies on Lemma \ref{l:smallscaleinvar}.

\begin{lemma}\label{e:lesmk}
There exist $C_0,C_1>0$ such that for any $\cutheat \geq \max(8\sqrt{|b|}+4,16)$, any $r\geq 1$, and any $f\in L^2(M,\nu)$ with $\|f\|=1$, there holds 
$$
\langle e^{r\Delta}f_+,e^{r\Delta}f_-\rangle\geq C_0 w_{\cutheat}(r)^2\sum_{k\in\net}m_k^+m_k^-- C_1 \exp\left(|b|r-\frac{\cutheat^2r}{64}\right)
$$
where $m_k^\pm=\langle f_\pm,e^{r\Delta}\psi_k\rangle$.
\end{lemma}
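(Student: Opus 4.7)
The plan is to apply Lemma~\ref{l:smallscaleinvar} separately to the non-negative $L^2$-normalized functions $\widehat{f}^{\pm} := f_\pm / \|f_\pm\|$ (if either $f_\pm \equiv 0$ the inequality is trivial, since then the corresponding $m_k^\pm$ all vanish). Each Voronoi cell $V_k$ has diameter at most $2 \leq 4r+4$, so the lemma produces non-negative remainders $R^\pm \in C^0(M)$ with $\|R^\pm\| \leq C \exp(|b|r - \cutheat^2 r/64)$ satisfying, for every $x, x' \in V_k$,
\[
e^{r\Delta}\widehat{f}^{\pm}(x) \geq w_\cutheat(r)\bigl(e^{r\Delta}\widehat{f}^{\pm}(x') - R^\pm(x')\bigr).
\]
Multiplying by $\|f_\pm\| \leq 1$, setting $\widetilde{R}^\pm := \|f_\pm\| R^\pm \geq 0$ (still with $\|\widetilde{R}^\pm\| \leq C \exp(|b|r - \cutheat^2 r/64)$), and averaging the right-hand side over $x' \in V_k$, I obtain, for every $x \in V_k$,
\[
e^{r\Delta} f_\pm(x) \geq w_\cutheat(r) \Bigl( \frac{m_k^\pm}{\sqrt{\vol(V_k)}} - E_k^\pm \Bigr), \qquad E_k^\pm := \frac{1}{\vol(V_k)}\int_{V_k} \widetilde{R}^\pm \, d\nu \geq 0.
\]

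Next, call $k \in \net$ \emph{good} if $m_k^\pm \geq E_k^\pm \sqrt{\vol(V_k)}$ for both signs, and \emph{bad} otherwise. On good cells the two sign-specific pointwise lower bounds may be multiplied and integrated, producing
\[
\int_{V_k} e^{r\Delta} f_+ \cdot e^{r\Delta} f_- \, d\nu \geq w_\cutheat(r)^2 \bigl( m_k^+ - E_k^+ \sqrt{\vol(V_k)} \bigr) \bigl( m_k^- - E_k^- \sqrt{\vol(V_k)} \bigr).
\]
On bad cells I use only the trivial bound $\int_{V_k} e^{r\Delta}f_+\cdot e^{r\Delta}f_-\,d\nu\geq 0$; the key observation is that on such cells at least one $m_k^\pm$ is dominated by $E_k^\pm \sqrt{\vol(V_k)}$, so $m_k^+ m_k^- \leq E_k^+\sqrt{\vol(V_k)}\,m_k^- + E_k^-\sqrt{\vol(V_k)}\,m_k^+$, meaning the bad contributions to $\sum_k m_k^+m_k^-$ are themselves of the same form as the cross-error-terms obtained by expanding the product on good cells. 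Using that $(V_k)_{k\in\net}$ is pairwise disjoint, summing over good and bad $k$ and expanding yields
\[
\langle e^{r\Delta} f_+, e^{r\Delta} f_-\rangle \geq w_\cutheat(r)^2 \sum_{k\in\net} m_k^+ m_k^- - C\, w_\cutheat(r)^2\, \mathcal{E}(r),
\]
where $\mathcal{E}(r)$ is a combination of terms of the form $\sum_k E_k^\pm \sqrt{\vol(V_k)}\, m_k^\mp$ and $\sum_k E_k^+ E_k^- \vol(V_k)$.

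The final step is to bound $\mathcal{E}(r)$ using Cauchy--Schwarz. Since $(\psi_k)_{k\in\net}$ is orthonormal in $L^2(M,\nu)$ (disjoint supports, each normalized), Bessel's inequality yields $\sum_k (m_k^\pm)^2 \leq \|e^{r\Delta} f_\pm\|^2 \leq \|f_\pm\|^2 \leq 1$. A single Cauchy--Schwarz inside each cell shows $(E_k^\pm)^2 \vol(V_k) \leq \int_{V_k} (\widetilde{R}^\pm)^2 \, d\nu$, and summing over the pairwise disjoint supports gives $\sum_k (E_k^\pm)^2 \vol(V_k) \leq \|\widetilde{R}^\pm\|^2 \leq C^2 \exp(2|b|r - \cutheat^2 r/32)$. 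One more Cauchy--Schwarz on the cross terms, combined with the uniform boundedness of $w_\cutheat(r)^2$, then produces the required remainder $C_1 \exp(|b|r - \cutheat^2 r/64)$. The main technical subtlety is that the pointwise lower bound for $e^{r\Delta} f_\pm$ on $V_k$ may be negative, which prevents multiplying the two sign-specific inequalities directly; this is exactly why the good/bad dichotomy is necessary, and the argument closes because the ``lost'' contributions on bad cells are already of cross-error type and therefore absorbed into the same exponential remainder.
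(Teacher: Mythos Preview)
Your proof is correct and follows essentially the same route as the paper: apply Lemma~\ref{l:smallscaleinvar} to $f_\pm$, average over each Voronoi cell to get a pointwise lower bound of the form $e^{r\Delta}f_\pm \gtrsim w_\cutheat(r)m_k^\pm - (\text{error})$, multiply the two bounds on each cell, and control the cross terms by Cauchy--Schwarz together with Bessel's inequality for $\sum_k (m_k^\pm)^2$.

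The only substantive difference is your good/bad dichotomy, which is in fact unnecessary. You overlooked the final ``$\geq 0$'' in the statement of Lemma~\ref{l:smallscaleinvar}: the remainder $R$ there is constructed as $R=\min(R_{>\cutheat-8},e^{r\Delta}f)$, so $e^{r\Delta}f - R \geq 0$ pointwise. Consequently, after your averaging step the quantity $\frac{m_k^\pm}{\sqrt{\vol(V_k)}} - E_k^\pm$ is automatically non-negative for every $k$, and all cells are good. The paper exploits this directly and multiplies without a case distinction. Your dichotomy does no harm and your error accounting is clean, but once you use the full strength of Lemma~\ref{l:smallscaleinvar} the argument becomes shorter.
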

Instead of $w_{\cutheat}(r)^2$ we could put the better $w_{\cutheat}(2)^2$ in the right-hand side. But for simplicity of notation, since $w_{\cutheat}(r)$ appears at other places in the sequel whereas $w_{\cutheat}(2)$ does not, we present the statement in the above form.
\begin{proof}[Proof of Lemma \ref{e:lesmk}]
We denote by $R^{\pm}$ the remainder corresponding to $f^\pm$ in Lemma \ref{l:smallscaleinvar}. We recall that $C_0\leq \vol(\supp(\psi_k))\leq C_1$ (see \eqref{e:volmin}).

Let $k\in\net$ and $x\in \supp(\psi_k)$. Averaging over $x'$ in the support of $\psi_k$ and using that $4r+4\geq 2\geq \text{diam}(V_k)$ and $\|\psi_k\|_{L^\infty}\leq C_2$ (see Section \ref{s:voronoi}), we deduce from Lemma \ref{l:smallscaleinvar} that
\begin{align}
e^{r\Delta}f_\pm(x) \geq \frac{ C_2^{-1}w_{\cutheat}(r)}{\vol(\supp(\psi_k))}(\langle e^{r\Delta}f_\pm,\psi_k\rangle -\langle R^\pm,\psi_k\rangle)\geq C_3w_{\cutheat}(r)m_k^{\pm} -C_4\|R^\pm\|_{L^2(\supp(\psi_k))}\geq 0\label{e:Ar1f+}
\end{align} 
where we used the Cauchy-Schwarz inequality for the second term. 

We notice that 
\begin{align*}
\sum_{k\in\net} m_k^-\|R^+\|_{L^2(\supp(\psi_{k}))}&\leq \left(\sum_{k\in\net} \|R^+\|_{L^2(\supp(\psi_{k}))}^2\right)^{\frac12} \left(\sum_{k\in\net} \left(m_k^-\right)^2\right)^{\frac12}\\
&\leq C_5\exp\left(|b|r-\frac{\cutheat^2r}{64}\right)
\end{align*}
and similarly for the sum with switched signs, i.e., with $m_k^+$ and $R^-$. Then using \eqref{e:Ar1f+} and the fact that $w_{\cutheat}(r)\leq 1$ we obtain
\begin{align*}
\langle e^{r\Delta}f_+,e^{r\Delta}f_-\rangle&\geq  \sum_{k\in\net} \int_{\supp(\psi_k)} e^{r\Delta}f_+(x)e^{r\Delta}f_-(x)d\nu(x)\\
&\geq C_0\sum_{k\in\net} \left(C_3w_{\cutheat}(r) m_k^+ -C_4 \|R^+\|_{L^2(\supp(\psi_{k}))}\right)\left(C_3w_{\cutheat}(r) m_k^- -C_4 \|R^-\|_{L^2(\supp(\psi_{k}))}\right)\\
&\geq C_6w_{\cutheat}(r)^2\sum_{k\in\net}m_k^+m_k^-- C_7 \exp\left(|b|r-\frac{\cutheat^2r}{64}\right)
\end{align*}
which concludes the proof.
\end{proof}

We provide now what will serve as the lower bound for $G_1+G_2$ alluded to at the beginning of this section. 

\begin{lemma}\label{e:Ar1|f|phik}
For any $x \in M$ set 
\begin{equation}\label{e:netx}
    \net_x=\{k\in\net\mid \supp(\psi_k) \cap B_d(x,\cutchi r_2) \neq \emptyset\}.
\end{equation}
There exist constants $C_0,C_1>0$ such that for any $\cutheat \geq \max(8\sqrt{|b|}+4,16)$,  any $f\in L^2(M,\nu)$ with $\|f\|=1$, any $x\in M$, 
\begin{align*}
\sum_{k\in\net_x} \langle e^{r_1\Delta}|f|,\psi_k\rangle^2&\geq C_0\exp(-2r_1\sqrt{|b|})w_{\cutheat}(r_1)^2\|\chi_x e^{r_1\Delta}|f|\|^2 -C_1 \exp\left(|b|r_1-\frac{\cutheat^2r_1}{64}\right).
\end{align*}
\end{lemma}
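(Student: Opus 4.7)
The plan is to combine Lemma~\ref{l:smallscaleinvar} (small scale invariance) with the elementary identity $\vol(V_k)\bar u_k^2 = \langle u,\psi_k\rangle^2$ (where $u := e^{r_1\Delta}|f|$ and $\bar u_k := \vol(V_k)^{-1}\int_{V_k} u\,d\nu$) to derive, on each Voronoi cell, a Poincaré-type estimate that upper-bounds $\int_{V_k}u^2\,d\nu$ by $\langle u,\psi_k\rangle^2$ up to a small remainder. Since $\chi_x$ is essentially the indicator of $\bigcup_{k\in\net_x}V_k$ (the two definitions agree up to $\nu$-null boundaries), we have $\|\chi_x u\|^2=\sum_{k\in\net_x}\int_{V_k}u^2\,d\nu$, so summing the per-cell estimate over $k\in\net_x$ will deliver the desired inequality.

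First, I would apply Lemma~\ref{l:smallscaleinvar} to the nonnegative, $L^2$-normalized function $|f|$ at time $r_1$ (all hypotheses are met since $r_1\ge 1$ and $\cutheat\ge\max(8\sqrt{|b|}+4,16)$). This produces a continuous remainder $R$ with $\|R\|\le C_0\exp(|b|r_1-\cutheat^2 r_1/64)$ such that $u(y')\ge w_{\cutheat}(r_1)(u(y)-R(y))$ whenever $d(y,y')\le 4r_1+4$. Because this inequality holds for \emph{every} such pair, exchanging $y$ and $y'$ and rearranging yields the one-sided upper bound
\begin{equation*}
u(y)\le w_{\cutheat}(r_1)^{-1}u(y')+R(y)\qquad \text{whenever } d(y,y')\le 4r_1+4.
\end{equation*}
Each Voronoi cell satisfies $V_k\subset B_d(v_k,1)$, hence has diameter $\le 2\le 4r_1+4$, so this bound is valid for every pair $y,y'\in V_k$. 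Averaging in $y'$ with respect to the uniform probability measure on $V_k$ gives $u(y)\le w_{\cutheat}(r_1)^{-1}\bar u_k+R(y)$ for every $y\in V_k$. Squaring, using $(a+b)^2\le 2a^2+2b^2$, and integrating over $V_k$ yields
\begin{equation*}
\int_{V_k}u^2\,d\nu\le 2w_{\cutheat}(r_1)^{-2}\vol(V_k)\bar u_k^2+2\|R\|^2_{L^2(V_k)}=2w_{\cutheat}(r_1)^{-2}\langle u,\psi_k\rangle^2+2\|R\|^2_{L^2(V_k)},
\end{equation*}
where the final equality uses $\psi_k=\vol(V_k)^{-1/2}\mathbf{1}_{V_k}$.

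Summing over $k\in\net_x$ (the $V_k$ are pairwise essentially disjoint) gives $\|\chi_x u\|^2\le 2w_{\cutheat}(r_1)^{-2}\sum_{k\in\net_x}\langle u,\psi_k\rangle^2+2\|R\|^2$, which rearranges to
\begin{equation*}
\sum_{k\in\net_x}\langle u,\psi_k\rangle^2\ge \frac{w_{\cutheat}(r_1)^2}{2}\|\chi_x u\|^2-w_{\cutheat}(r_1)^2\|R\|^2.
\end{equation*}
The remainder is bounded using $\|R\|^2\le C_0^2\exp(2|b|r_1-\cutheat^2 r_1/32)$; the hypothesis $\cutheat\ge 8\sqrt{|b|}+4$ forces $\cutheat^2\ge 64|b|$, so $2|b|r_1-\cutheat^2 r_1/32\le |b|r_1-\cutheat^2 r_1/64$ and $\|R\|^2\le C_1\exp(|b|r_1-\cutheat^2 r_1/64)$. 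Since $w_{\cutheat}(r_1)\le 1$ and the trivial inequality $\tfrac12\ge C_0\exp(-2r_1\sqrt{|b|})$ holds for any $C_0\le 1/2$, the claimed estimate follows (in fact with a slightly stronger constant than advertised). There is no substantial obstacle: all the geometric and spectral content has already been packaged into Lemma~\ref{l:smallscaleinvar}, and what remains is a direct reverse-Cauchy--Schwarz computation made possible by the near-constancy of $u$ over scales that dominate the Voronoi cell diameter.
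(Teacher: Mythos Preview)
There is a genuine gap: your identification of $\supp(\chi_x)$ with $\bigcup_{k\in\net_x}V_k$ is incorrect. Recall that $\net\subset\{1,\ldots,m\}$ is the \emph{sparse} index set coming from the $r_1$-net (with $\#\net\lesssim \vol(M)/r_1$), whereas the cells $V_1,\ldots,V_m$ tile all of $M$. The cutoff $\chi_x$ must be the indicator of the union of \emph{all} Voronoi cells $V_k$, $k\in\{1,\ldots,m\}$, meeting $B_d(x,\cutchi r_2)$; in particular $\supp(\chi_x)\supset B_d(x,\cutchi r_2)$, as stated immediately after \eqref{e:defcutchi} and as used essentially in the proof of Lemma~\ref{l:simoooon}. (The subscript ``$k\in\net$'' printed in \eqref{e:defcutchi} is a typo; with that reading the inclusion $\supp(\chi_x)\supset B_d(x,\cutchi r_2)$ would simply fail.) The set $\bigcup_{k\in\net_x}V_k$ is therefore a small, scattered subset of $\supp(\chi_x)$, so your identity $\|\chi_x u\|^2=\sum_{k\in\net_x}\int_{V_k}u^2\,d\nu$ is false, and the per-cell estimate you prove only controls the much smaller quantity $\sum_{k\in\net_x}\int_{V_k}u^2$.

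What is missing is precisely the $r_1$-net covering: every point of $\supp(\chi_x)$ lies within distance $2r_1+2$ of some $y_k$ with $k\in\net_x$ (this is \eqref{e:inclusiondusupportdechi}). One then applies Lemma~\ref{l:smallscaleinvar} at the scale $4r_1+4$ --- not merely at the cell diameter $\le 2$ --- to dominate $u$ on the entire ball $B_d(y_k,2r_1+2)$ by $u(y_k)\lesssim\langle u,\psi_k\rangle$. The factor $\exp(-2r_1\sqrt{|b|})$ in the statement is exactly the reciprocal of the volume of such a ball, and it is not slack: the fact that your argument produces the visibly stronger constant $\tfrac12$ in place of $C_0\exp(-2r_1\sqrt{|b|})$ is already a signal that a step has gone wrong.
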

\begin{proof}
For any $k\in\net$, we denote by $y_k$ a point where $e^{r_1\Delta}|f|$ attains its minimum on $V_k=\supp(\psi_k)$. We first show that 
\begin{equation}
\label{e:inclusiondusupportdechi}
\supp(\chi_x)\subset \bigcup_{k\in \net_x}B_d(y_k,2r_1+2).
\end{equation} 
Let $z\in\supp(\chi_x)$. If $z\in\supp(\chi_x)\setminus B_d(x,\cutchi r_2)$, then $z\in V_k$ for some $k\in\net$, and $d(z,y_k)\leq \text{diam}(V_k)\leq 2r_1+2$. Otherwise, $z\in B_d(x,\cutchi r_2)$, and therefore there exists $z'\in B_d(x,\cutchi r_2-r_1)$ at distance at most $r_1$ from $z$. We have $B_d(z’, r_1)\subset B(x, \cutchi r_2)$, therefore there exists an element of the $r_1$-net $x_k\in B(x, \cutchi r_2)$ with $d(x_k,z')\leq r_1$. It follows that 
$$
d(y_k,z)\leq d(y_k,x_k)+d(x_k,z')+d(z',z)< 2r_1+2.
$$

Using Lemma \ref{l:smallscaleinvar} and noticing that $4r_1 +4$ is the diameter of the ball $B_d(y_k,2r_1+2)$, we have
\begin{align*}
\langle e^{r_1\Delta}|f|,\psi_k\rangle^2& \geq C_0(e^{r_1\Delta}|f|(y_k))^2\\
&\geq C_0\exp(-2r_1\sqrt{|b|})w_{\cutheat}(r_1)^2\int_{B_d(y_k,2r_1+2)}(e^{r_1\Delta}|f|(x')-R(x'))^2\nu(dx')
\end{align*}
where we used that $\vol(B_d(y_k,2r_1+2))\leq C_1\exp(2r_1\sqrt{|b|})$ according to \eqref{e:volumeballmodel}.
Summing over $k \in \net_x$ and using  \eqref{e:inclusiondusupportdechi}, we obtain
\begin{align*}
\sum_{k\in\net_x}\langle e^{r_1\Delta}|f|,\psi_k\rangle^2& \geq C_0\exp(-2r_1\sqrt{|b|})w_{\cutheat}(r_1)^2\int_{M}\chi_x(x')(e^{r_1\Delta}|f|(x')-R(x'))^2\nu(dx') \\ 
&\geq C_0\exp(-2r_1\sqrt{|b|})w_{\cutheat}(r_1)^2\|\chi_x e^{r_1\Delta}|f|\|^2-C_2\|R\|
\end{align*}
where in the last line we  developed the square in the right-hand side, we used the Cauchy-Schwarz inequality in $L^2(M,\nu)$ and the bound $\|e^{r_1\Delta}|f|\|\leq 1$.
\end{proof}

\begin{lemma} \label{l:gain}
Let  $\varphi_x$ be a function which attains the supremum in \eqref{e:defofphix}. There exist $C_0,C_1>0$ such that for any $\cutheat \geq \max(8\sqrt{|b|}+4,16)$ and any $x\in M$,  
\begin{align*}
\|(P\chi_xe^{r_1\Delta}\chi_xP)\varphi_x\|^2&\leq (1-C_0 \exp(-2r_1\sqrt{|b|})w_{\cutheat}(r_1)^4)\|e^{r_1\Delta}|\varphi_x|\|^2\\
&\qquad \qquad+C_1 \exp\left(|b|r_1-\frac{\cutheat^2r_1}{64}\right).
\end{align*}
\end{lemma}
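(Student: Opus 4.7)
Set $E=\exp(|b|r_1-\cutheat^2 r_1/64)$, write $\varphi_\pm=\max(\pm\varphi_x,0)$ (so $\varphi_x=\varphi_+-\varphi_-$ and $|\varphi_x|=\varphi_++\varphi_-$), and put $m_k^\pm=\langle e^{r_1\Delta}\varphi_\pm,\psi_k\rangle$. Using $\chi_x\varphi_x=\varphi_x$ and $P\varphi_x=\varphi_x$ from \eqref{e:eqsatisfiedbyphix} (so that $P\chi_xe^{r_1\Delta}\chi_xP\varphi_x=P\chi_xe^{r_1\Delta}\varphi_x$), together with $\chi_x\psi_k=\psi_k$ for $k\in\net_x$ and $\chi_x\psi_k=0$ a.e.\ for $k\notin\net_x$, and the elementary identity $\|e^{r_1\Delta}|\varphi_x|\|^2-\|e^{r_1\Delta}\varphi_x\|^2=4\langle e^{r_1\Delta}\varphi_+,e^{r_1\Delta}\varphi_-\rangle$, I first derive
\begin{equation*}
\|e^{r_1\Delta}|\varphi_x|\|^2-\|P\chi_xe^{r_1\Delta}\chi_xP\varphi_x\|^2 = G_1+G_2+\|(1-\chi_x)e^{r_1\Delta}\varphi_x\|^2,
\end{equation*}
with $G_1=4\langle e^{r_1\Delta}\varphi_+,e^{r_1\Delta}\varphi_-\rangle$ and $G_2=\sum_{k\in\net_x}(m_k^+-m_k^-)^2$. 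The third term is non-negative, so it suffices to show that the full sum on the right is at least $C_0\exp(-2r_1\sqrt{|b|})w_{\cutheat}(r_1)^4\|e^{r_1\Delta}|\varphi_x|\|^2-C_1E$.

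Put $\alpha=C_{\ref{e:lesmk}}w_{\cutheat}(r_1)^2$, which I may assume is $\leq 1$ by shrinking the constant from Lemma \ref{e:lesmk} (the exponential factor in $w_\cutheat$ makes this automatic). Lemma \ref{e:lesmk} applied to $f=\varphi_x$ gives $G_1\geq 4\alpha\sum_{k\in\net}m_k^+m_k^--CE$. To lower-bound the sum in terms of $\|e^{r_1\Delta}|\varphi_x|\|^2$, I invoke a direct extension of Lemma \ref{e:Ar1|f|phik} in which $\net_x$ is replaced by $\net$ and $\chi_x$ by $1$. Its proof goes through verbatim: the key step is the covering $\supp(\chi_x)\subset\bigcup_{k\in\net_x}B_d(y_k,2r_1+2)$, and since $(x_k)_{k\in\net}$ is an $r_1$-net of all of $M$ one likewise has $M\subset\bigcup_{k\in\net}B_d(y_k,r_1+2)$ (balls of diameter $\leq 4r_1+4$, so Lemma \ref{l:smallscaleinvar} still applies). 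Since $\langle e^{r_1\Delta}|\varphi_x|,\psi_k\rangle=m_k^++m_k^-$, this yields $\sum_{k\in\net}(m_k^++m_k^-)^2\geq\beta\|e^{r_1\Delta}|\varphi_x|\|^2-C'E$ with $\beta=C\exp(-2r_1\sqrt{|b|})w_{\cutheat}(r_1)^2$.

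Using $4m_k^+m_k^-=(m_k^++m_k^-)^2-(m_k^+-m_k^-)^2$ and combining the two bounds above,
\begin{equation*}
G_1\geq\alpha\beta\|e^{r_1\Delta}|\varphi_x|\|^2-\alpha G_2-\alpha\sum_{k\in\net\setminus\net_x}(m_k^+-m_k^-)^2-C''E.
\end{equation*}
For $k\in\net\setminus\net_x$, the Voronoi cell $V_k$ is disjoint from $\supp(\chi_x)$ up to measure zero, so $\psi_k=(1-\chi_x)\psi_k$ a.e., whence $\langle e^{r_1\Delta}\varphi_x,\psi_k\rangle=\langle(1-\chi_x)e^{r_1\Delta}\varphi_x,\psi_k\rangle$. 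Since $\{\psi_k\}_{k\in\net\setminus\net_x}$ are orthonormal (disjoint supports, unit norm), Bessel's inequality yields $\sum_{k\in\net\setminus\net_x}(m_k^+-m_k^-)^2\leq\|(1-\chi_x)e^{r_1\Delta}\varphi_x\|^2$. Plugging in and rearranging gives $G_1+\alpha G_2+\alpha\|(1-\chi_x)e^{r_1\Delta}\varphi_x\|^2\geq\alpha\beta\|e^{r_1\Delta}|\varphi_x|\|^2-C''E$, and the assumption $\alpha\leq 1$ upgrades this to the desired lower bound on $G_1+G_2+\|(1-\chi_x)e^{r_1\Delta}\varphi_x\|^2$. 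Substituting into the identity completes the proof.

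\textbf{Main obstacle.} The delicate point is bridging the gap between the natural output of Lemma \ref{e:Ar1|f|phik} (which only controls $\|\chi_xe^{r_1\Delta}|\varphi_x|\|^2$) and the target bound (involving the full $\|e^{r_1\Delta}|\varphi_x|\|^2$). A heat-kernel tail estimate on $\|(1-\chi_x)e^{r_1\Delta}|\varphi_x|\|^2$ is unavailable because $|\varphi_x|$ may carry mass right up to $\partial\supp(\chi_x)$, where adjacent Voronoi cells in $\net\setminus\net_x$ sit. The resolution combines two observations: extend the covering argument to the full $r_1$-net of $M$, and recognize that the extra $(m_k^+-m_k^-)^2$ contributions for $k\notin\net_x$ are exactly captured by $\|(1-\chi_x)e^{r_1\Delta}\varphi_x\|^2$ through Bessel's inequality, since those $\psi_k$ are supported off $\supp(\chi_x)$.
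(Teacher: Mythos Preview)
Your proof is correct. It follows the same opening identity as the paper's proof, namely
\[
\|e^{r_1\Delta}|\varphi_x|\|^2-\|P\chi_xe^{r_1\Delta}\chi_xP\varphi_x\|^2 = G_1+G_2+\|(1-\chi_x)e^{r_1\Delta}\varphi_x\|^2,
\]
and both arguments invoke Lemma~\ref{e:lesmk} to lower-bound $G_1$. The divergence is in how the ``outside $\chi_x$'' contribution is reconciled with the target norm $\|e^{r_1\Delta}|\varphi_x|\|^2$. The paper keeps half of $G_1$ in reserve and combines it with $\|(1-\chi_x)e^{r_1\Delta}\varphi_x\|^2$ via the pointwise inequality
\[
2\langle e^{r_1\Delta}\varphi_+,e^{r_1\Delta}\varphi_-\rangle+\|(1-\chi_x)e^{r_1\Delta}\varphi_x\|^2\geq \tfrac12\|(1-\chi_x)e^{r_1\Delta}|\varphi_x|\|^2,
\]
then applies Lemma~\ref{e:Ar1|f|phik} exactly as stated (over $\net_x$, bounding $\|\chi_xe^{r_1\Delta}|\varphi_x|\|^2$) and adds the two pieces. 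You instead extend Lemma~\ref{e:Ar1|f|phik} to the full net $\net$ (the covering $M\subset\bigcup_{k\in\net}B_d(y_k,2r_1+2)$ is indeed immediate from the $r_1$-net property, so the extension is legitimate), and then absorb the surplus $(m_k^+-m_k^-)^2$ terms for $k\in\net\setminus\net_x$ into $\|(1-\chi_x)e^{r_1\Delta}\varphi_x\|^2$ by Bessel's inequality, using that those $\psi_k$ are supported off $\supp(\chi_x)$ up to measure zero. Both routes yield the same constant $\exp(-2r_1\sqrt{|b|})w_{\cutheat}(r_1)^4$; the paper's is slightly more self-contained (no restatement of a lemma), while yours is arguably more mechanical and makes transparent exactly where the term $\|(1-\chi_x)e^{r_1\Delta}\varphi_x\|^2$ is spent.
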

\begin{proof}
We fix $x\in M$. We compute $\varepsilon=\|e^{r_1\Delta}\chi_x|\varphi_x|\|^2-\|P\chi_x e^{r_1\Delta}\chi_x P\varphi_x\|^2$ which can be simplified to
\begin{equation*}
\varepsilon=\|e^{r_1\Delta}|\varphi_x|\|^2-\|P\chi_x e^{r_1\Delta}\varphi_x\|^2
\end{equation*}
due to \eqref{e:eqsatisfiedbyphix}.
First we compute without the absolute value on $\varphi_x$, and we use from line 1 to line 2 that $\chi_x\in\{0,1\}$:
\begin{align*}
\|e^{r_1\Delta}\varphi_x\|^2-\|P\chi_x e^{r_1\Delta}\varphi_x\|^2
&=\langle (\text{Id}-\chi_x P\chi_x)e^{r_1\Delta}\varphi_x,e^{r_1\Delta}\varphi_x\rangle \\
&=\|(1-\chi_x) e^{r_1\Delta}\varphi_x\|^2+\|(\text{Id}-P)\chi_x e^{r_1\Delta}\varphi_x\|^2\\
&=\|(1-\chi_x) e^{r_1\Delta}\varphi_x\|^2+\sum_{k\in\net_x}\langle e^{r_1\Delta}\varphi_x,\psi_k\rangle^2
\end{align*}
where $\net_x$ has been introduced in \eqref{e:netx}.

Then we notice the following identity:
$$
\|e^{r_1\Delta}|\varphi_x|\|^2-\|e^{r_1\Delta}\varphi_x\|^2=4\langle e^{r_1\Delta}\varphi_+,e^{r_1\Delta}\varphi_-\rangle
$$
where $\varphi_\pm = \max(\pm \varphi_x,0)$. All in all,
$$
\varepsilon=4\langle e^{r_1\Delta}\varphi_+,e^{r_1\Delta}\varphi_-\rangle+\|(1-\chi_x)e^{r_1\Delta}\varphi_x\|^2+\sum_{k\in\net_x}(\langle e^{r_1\Delta}\varphi_+,\psi_k\rangle-\langle e^{r_1\Delta}\varphi_-,\psi_k\rangle)^2
$$
(we write the last term as a difference on purpose). 
Using Lemma \ref{e:lesmk}, its notation and the fact that $m_k^++m_k^-=\langle e^{r_1\Delta}|\varphi_x|,\psi_k\rangle$ we have
\begin{align*}
\varepsilon &\geq \sum_{k\in\net_x} (m_k^+-m_k^-)^2+2\left(C_0w_{\cutheat}(r_1)^2\sum_{k\in\net} m_k^+m_k^-- C_1 \exp\left(|b|r_1-\frac{\cutheat^2r_1}{64}\right)\right) \\
& \qquad \qquad  + 2\langle e^{r_1\Delta}\varphi_+,e^{r_1\Delta}\varphi_-\rangle + \|(1-\chi_x) e^{r_1\Delta}\varphi_x\|^2\\
&\geq \min\left(1,\frac{C_0w_{\cutheat}(r_1)^2}{2}\right)\sum_{k\in\net_x} \langle e^{r_1\Delta}|\varphi_x|,\psi_k\rangle^2-2C_1 \exp\left(|b|r_1-\frac{\cutheat^2r_1}{64}\right)   + \frac{\|(1-\chi_x) e^{r_1\Delta}|\varphi_x|\|^2}{2}.
\end{align*}
Using Lemma \ref{e:Ar1|f|phik} and its notation we obtain that there exist $C_2,C_3>0$ such that
\begin{align*}
\varepsilon&\geq C_2\exp(-2r_1\sqrt{|b|})w_{\cutheat}(r_1)^4\|\chi_xe^{r_1\Delta}|\varphi_x|\|^2+\frac{\|(1-\chi_x) e^{r_1\Delta}|\varphi_x|\|^2}{2}  -C_3 \exp\left(|b|r_1-\frac{\cutheat^2r_1}{64}\right)\\
&\geq C_2\exp(-2r_1\sqrt{|b|})w_{\cutheat}(r_1)^4 \|e^{r_1\Delta}|\varphi_x|\|^2-C_3 \exp\left(|b|r_1-\frac{\cutheat^2r_1}{64}\right),
\end{align*}
which concludes the proof.
\end{proof}

\section{Proof of the main results}

\subsection{Proof of Theorems \ref{t:sublinearA} and \ref{t:volumebound}}
\label{s:proofofth}

Building upon the results of Section \ref{s:key}, we proceed with the proof of Theorem \ref{t:volumebound}, and we explain at the end of the section how to deduce Theorem \ref{t:sublinearA}. We fix $\rho>0$, $b<0$ and $M$ a closed, connected Riemannian surface with ${\rm inj}(M)\geq \rho$ and $c(x)\geq b$ for any $x\in M$. We also assume ${\rm vol}(M)\geq 3$ in order for $r_1,r_2$ to be well-defined. The case of small volumes is handled at the end.

We recall that $\mu_2=e^{-\lambda_2(M)}$. We denote by $m$ the multiplicity of $\lambda_2(M)$ as an eigenvalue of $-\Delta$. Then, $m$ is also the multiplicity of $\mu_2^{r_1}$ as an eigenvalue of $e^{r_1\Delta}$. We denote by $m'$ the multiplicity of $\mu_2^{r_1}$ as an eigenvalue of $Pe^{r_1\Delta}P$, which is also compact, self-adjoint and non-negative. 

In the sequel, we provide an upper bound on $m'$ which will be seen to be sufficient to bound $m$ (see \eqref{e:cauchyresult} below).

Since $e^{\Delta}$ is a trace-class operator, $(Pe^{r_1\Delta}P)^{2\lfloor r_2/r_1\rfloor+2}$ is also trace-class. We have by Lemma \ref{l:tracedelta}
\begin{equation*}
    m'\mu_2^{2r_1(\lfloor r_2/r_1\rfloor+1)}\leq \text{Tr}((Pe^{r_1\Delta}P)^{2\lfloor r_2/r_1\rfloor+2})=\int_M \|(Pe^{r_1\Delta}P)^{\lfloor r_2/r_1\rfloor+1}\delta_x\|^2d\nu(x)
\end{equation*}
The right-hand side is bounded above by
\begin{equation*}
C_0 \left(\exp\left(-\frac{\cutchi^2 r_2}{32}\right) {\rm vol}(M)+\int_M\sup_{\|\varphi_x\|=1}\|(P\chi_xe^{r_1\Delta}\chi_xP)^{\lfloor r_2/r_1\rfloor}\varphi_x\|^2d\nu(x) \right)
\end{equation*}
due to Lemma \ref{l:comparedeltatophi}. This last expression is equal to
\begin{equation}
C_0 \left(\exp\left(-\frac{\cutchi^2 r_2}{32}\right) {\rm vol}(M)+\int_M\left(\sup_{\|\varphi_x\|=1}\|(P\chi_xe^{r_1\Delta}\chi_xP)\varphi_x\|^2\right)^{\lfloor r_2/r_1\rfloor}d\nu(x)\right)\label{e:last}
\end{equation}
since $P\chi_xe^{r_1\Delta}\chi_xP$ is self-adjoint on $L^2(M,\nu)$.

To continue, we need to fix the parameters $\cutheat$, $\cutchi$ and the parameter $c$ introduced in \eqref{e:defr1r2}. We denote by $C_2\in (0,1)$ a constant such that $\mu_2\geq C_2$ for any $M$ of curvature $\geq b$ (thanks to Lemma \ref{l:encadrement}). We choose successively (in this order) $\cutchi$, $\cutheat$ and $c>0$ such that 
\begin{align}
\frac{\cutchi^2}{32} &\geq \max(5|b|+2\cutchi \sqrt{|b|}+\cutchi, 32\sqrt{|b|}+16, -4 \log C_2 + 1) \label{e:choicecutchi}\\
\frac{\cutheat^2}{64}&\geq |b|+25(1+|b|)(1+\cutheat)-2\log(C_2) \label{e:choicecutheat}\\
\text{and} \quad \cutheat &\geq \max(8\sqrt{|b|}+4,16) \label{e:choicecutheat22}\\
\frac{1}{4}&\geq \left(3\cutchi \sqrt{|b|}- 4\log C_2+25(1+|b|)(1+\cutheat)\right) c. \label{e:choicec}
\end{align}
We also assume ${\rm vol}(M)$ large enough so that $r_1\geq 1$.
We separate the integral in \eqref{e:last} into an integral over $S$ and an integral over $M\setminus S$, where $S$ is chosen as in Lemma \ref{l:defofS}. Due to Lemma \ref{l:defofS}, the integral over $S$ is bounded above by $\nu(S)\leq C_1\exp(3\cutchi r_2\sqrt{|b|})$ since the operator norms of $e^{r_1\Delta}$, $\chi_x$ and $P$ are equal to $1$. The integral over $M\setminus S$ is bounded above by
\begin{equation}\label{e:bigintegral}
\int_{M\setminus S} \left(\left(1-C_4\exp(-hr_1)\right)^2\|e^{r_1\Delta}|\varphi_x|\|^2+C_3\exp\left(|b|r_1-\frac{\cutheat^2r_1}{64}\right)\right)^{\lfloor r_2/r_1\rfloor}d\nu(x)
\end{equation}
by Lemma \ref{l:gain} (which we can apply thanks to \eqref{e:choicecutheat22}), where 
\begin{equation*}
h= 25(1+|b|)(1+\cutheat).
\end{equation*}
For any $x\in M\setminus S$, we have by definition of $S$
\begin{equation}\label{e:allinall}
\left(1-C_4\exp(-hr_1)\right)^2\|e^{r_1\Delta}|\varphi_x|\|^2 \leq \left(1-C_4\exp(-hr_1)\right)^2\left(\mu_2^{2r_1}+C_5\exp(-\cutchi r_2) \right)\leq  (1-C_6\exp(-hr_1))^2\mu_2^{2r_1}.
\end{equation}
Thanks to \eqref{e:choicecutheat}, 
 \begin{equation*}
\frac{\cutheat^2}{64}r_1\geq |b|r_1+hr_1-2\log(C_2)r_1.
\end{equation*}
Therefore we get, again for ${\rm vol}(M)$ large enough,
\begin{align*}
\frac{C_6}{2}\exp\left(|b|r_1-\frac{\cutheat^2r_1}{64}\right) &\leq \frac{C_6}{2}\exp(-hr_1)\mu_2^{2r_1}\\
&\leq \mu_2^{2r_1}\left(1-\frac{C_6}{2}\exp(-hr_1)\right)^2-\mu_2^{2r_1}(1-C_6\exp(-hr_1))^2.
\end{align*}
Combining with \eqref{e:allinall}, we obtain that \eqref{e:bigintegral} is bounded above by
\begin{align*}
C_0\mu_2^{2r_1\lfloor r_2/r_1\rfloor}\int_{M\setminus S}\left(1-\frac{C_6}{2}\exp(-hr_1)\right)^{2\lfloor r_2/r_1\rfloor}d\nu(x) \leq C_0 {\rm vol}(M)\left(\mu_2^{r_1}\left(1-\frac{C_6}{2}\exp(-hr_1)\right)\right)^{2\lfloor r_2/r_1\rfloor}
\end{align*}
due to Lemma \ref{l:defofS}.  Summarizing, we have obtained
\begin{align}\label{e:summarym'}
m'\mu_2^{2r_1(\lfloor r_2/r_1\rfloor+1)}&\leq C_0 {\rm vol}(M)\left(\mu_2^{r_1}\left(1-\frac{C_6}{2}\exp(-hr_1)\right)\right)^{2\lfloor r_2/r_1\rfloor}\\
&\qquad+C_0\exp(2\cutchi r_2\sqrt{|b|})+C_0\exp\left(-\frac{\cutchi^2 r_2}{32}\right){\rm vol}(M)\nonumber
\end{align}

We divide by $\mu_2^{2r_1(\lfloor r_2/r_1\rfloor+1)}$ and use the inequality $1-x\leq e^{-x}$ to deduce that $m'$ is bounded above by
\begin{equation}\label{e:grosC0foismachinbidulle}
C_0 \left(\frac{{\rm vol}(M)}{\mu_2^{2r_1}}\exp\left(-C_7\exp(-hr_1) \lfloor r_2/r_1\rfloor\right)+\frac{\exp(3\cutchi r_2\sqrt{|b|})+ \exp(-\cutchi^2 r_2/32){\rm vol}(M)}{\mu_2^{4r_2}}\right)
\end{equation}
Thanks to our choice of parameters \eqref{e:choicecutchi} and \eqref{e:choicec} we get that 
$$ \frac{\exp(3\cutchi r_2\sqrt{|b|})+ \exp(-\cutchi^2 r_2/32){\rm vol}(M)}{\mu_2^{4r_2}} \leq {\rm vol}(M)^{\frac12}+{\rm vol}(M)^{1-c}.$$ 
Thanks to \eqref{e:choicec} we have for ${\rm vol}(M)$ sufficiently large
$$ \frac{{\rm vol}(M)}{\mu_2^{2r_1}}\exp\left(-C_7\exp(-hr_1) \lfloor r_2/r_1\rfloor\right) \leq {\rm vol}(M) \exp\left( -C_8 \log^{2/3} {\rm vol}(M)\right).$$
 All in all, we find that for ${\rm vol}(M)$ sufficiently large,
\begin{equation}\label{e:mm'}
m'\leq C_0 {\rm vol}(M) \exp\left( -C_8 \log^{2/3} {\rm vol}(M)\right).
\end{equation}


By the Cauchy interlacing theorem (Theorem \ref{t:cauchy}) there holds
\begin{equation}\label{e:cauchyresult}
m\leq m'+\text{rank}(\text{Id}-P).
\end{equation}
Under the assumptions of Theorem \ref{t:volumebound} we can choose the $r_1$-net in a way that $\text{rank}(\text{Id}-P)\leq C_0 {\rm vol}(M)/r_1$ according to Lemma \ref{l:taillernet}, which together with \eqref{e:mm'} and \eqref{e:cauchyresult} concludes the proof in this first case for ${\rm vol}(M)$ large enough. Combining with \cite[Corollary 1.1]{hassan} we get the result for any ${\rm vol}(M)$.

Under the additional assumption that $\lambda_2(M)\geq \delta$, we can choose the $r_1$-net in a way that $\text{rank}(\text{Id}-P)\leq C_0 {\rm vol}(M)/e^{\delta' r_1}$ according to Lemma \ref{l:spectralgap} (with $\delta'$ given in this lemma), which together with \eqref{e:mm'} and \eqref{e:cauchyresult} proves the following statement:
\begin{theorem}\label{t:volboundspecgap}
For any $\rho>0$, $b<0$ and $\delta>0$ there exist $C_0,\alpha>0$ such that for any closed, connected Riemannian surface $M$ with ${\rm inj}(M)\geq \rho$, Gaussian curvature $\geq b$ and spectral gap $\lambda_2(M)\geq \delta$, the multiplicity of $\lambda_2(M)$ is at most $C_0(1+\frac{{\rm vol}(M)}{\log^\alpha (3+{\rm vol}(M))})$.
\end{theorem}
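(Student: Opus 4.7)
The plan is to rerun the proof of Theorem \ref{t:volumebound} essentially verbatim, modifying only the construction of the $r_1$-net to exploit the spectral gap hypothesis. The entire machinery of Section \ref{s:key} --- the error term estimates (Lemma \ref{l:comparedeltatophi}), the support localization of near-extremizers (Lemma \ref{l:defofS}), and most crucially the gain estimate Lemma \ref{l:gain} --- uses only the injectivity radius bound $\rho$ and the curvature lower bound $b$; none of it depends on the value of $\lambda_2(M)$. Consequently the bound \eqref{e:mm'} on the multiplicity $m'$ of $\mu_2^{r_1}$ as an eigenvalue of $Pe^{r_1\Delta}P$ remains valid without change, giving $m' \leq C_0 \, {\rm vol}(M) \exp(-C_8 \log^{2/3}{\rm vol}(M))$. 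This is already much smaller than the target quantity $C_0 \, {\rm vol}(M)/\log^\alpha {\rm vol}(M)$, so the only remaining task is to sharpen the bound on $\text{rank}(\Id-P)$.

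This sharpening is exactly the purpose of Lemma \ref{l:spectralgap}: Buser's inequality converts $\lambda_2(M) \geq \delta$ into a Cheeger lower bound $h(M) \geq \delta' = \delta'(\delta,b) > 0$, and this isoperimetric information forces each ball to grow exponentially, $\vol(B_d(x,r)) \gtrsim \rho^2 e^{\delta' r}$, as long as $r$ remains below a threshold of order $\log {\rm vol}(M)$. A standard maximal-$r$-separated-set argument then produces an $r_1$-net of cardinality at most $C_0 \, {\rm vol}(M)/e^{\delta' r_1}$. Taking $P$ to be the Voronoi-cell projection attached to this improved net, as in Section \ref{s:preliminaries}, and using $r_1 = c\log\log{\rm vol}(M)$, one obtains
\[
\text{rank}(\Id - P) \leq C_0 \frac{{\rm vol}(M)}{e^{\delta' r_1}} = C_0 \frac{{\rm vol}(M)}{(\log {\rm vol}(M))^{c\delta'}}.
\]

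Plugging into the Cauchy interlacing inequality \eqref{e:cauchyresult} gives $m \leq m' + \text{rank}(\Id-P)$, and since the superpolynomially small bound on $m'$ is absorbed by the rank term, one finds $m \leq C_0 \, {\rm vol}(M)/(\log {\rm vol}(M))^\alpha$ with $\alpha = c\delta'$, which is the claim up to the elementary wrapping in the argument of the logarithm. There are only two routine verifications: that $r_1$ lies below the threshold $\frac{1}{\sqrt{|b|}}\log(|b|{\rm vol}(M)/8\pi)$ required by Lemma \ref{l:spectralgap}, which is automatic once ${\rm vol}(M)$ is large because $r_1$ grows doubly-logarithmically; and that the small-volume regime is handled by \cite[Corollary 1.1]{hassan}, exactly as at the end of the proof of Theorem \ref{t:volumebound}.

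I do not anticipate a genuine obstacle here: the entire proof of Theorem \ref{t:volumebound} was engineered so that the cardinality of the net enters only at the very last step through $\text{rank}(\Id - P)$, making the upgrade a clean plug-in. The only thing that could in principle go wrong would be a hidden dependence of the Section \ref{s:key} constants on $\#\net$, but inspection of those proofs shows that the net is used purely to define $P$ and is never counted until the final combination with Cauchy interlacing, so the transfer should be immediate.
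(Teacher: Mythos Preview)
Your proposal is correct and matches the paper's own proof essentially line for line: the paper also reuses the entire argument leading to \eqref{e:mm'} unchanged, then invokes Lemma \ref{l:spectralgap} in place of Lemma \ref{l:taillernet} to obtain $\text{rank}(\Id-P)\leq C_0\,{\rm vol}(M)/e^{\delta' r_1}$, and concludes via \eqref{e:cauchyresult}. Your additional remarks on the threshold check for $r_1$ and the small-volume regime are accurate and handled the same way in the paper.
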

The first part of Theorem \ref{t:sublinearA} follows from Theorem \ref{t:volumebound} together with the Gauss-Bonnet theorem, which implies that ${\rm vol}(M)\leq \frac{4\pi}{|a|}(g-1)$. Similarly, the second part of Theorem \ref{t:sublinearA} follows from Theorem \ref{t:volboundspecgap} together with the Gauss-Bonnet theorem.

\begin{remark}\label{r:depC0}
To make the constants $C_0, C_1$ and $\alpha$ in Theorem \ref{t:sublinearA} explicit, we first multiply the Riemannian metric on $M$ by $|b|^{\frac12} + \rho^{-1}$ to obtain $M'$. Then $M' \in \mathcal{M}^{(a',-1,1)}$ where $a' = a\left(|b|^{\frac12} + \rho^{-1}\right)^{-2}$. Combining Theorem \ref{t:volumebound} (resp. Theorem \ref{t:volboundspecgap}) applied to $M'$ with parameters $b=-1$ and $\rho=1$ (resp. $b=-1$, $\rho=1$ and $\frac{\delta}{\left(|b|^{\frac12}+ \rho^{-1}\right)^2}$) and the Gauss--Bonnet formula we get that $C_0,C_1,\alpha$ may be taken as $$C_0 = C_1 = C_u\frac{|b|+\rho^{-2}}{|a|} \text{ and }\alpha = c_u\frac{\max\left(\frac{\sqrt{\delta}}{\sqrt{20}}, \frac{\delta}{4\sqrt{|b|}}\right)}{|b|^{\frac12}+\rho^{-1}}$$
where $C_u > 0 $ and $c_u > 0$ are two universal constants computable - in principle - from our methods.
\end{remark}

\begin{remark}\label{r:choiceparameter}
One can draw from \eqref{e:grosC0foismachinbidulle} a justification for our choices of $r_1$ and $r_2$ as \eqref{e:defr1r2}. Indeed, our goal is to make \eqref{e:grosC0foismachinbidulle} sublinear in ${\rm vol}(M)$. For the term $\exp(2\cutchi r_2\sqrt{|b|})/\mu_2^{4r_2}$, this requires $r_2=O(\log {\rm vol}(M))$. At the heuristic level, beyond time $\log {\rm vol}(M)$, the heat kernel is spread almost uniformly over $M$ (whose typical diameter is of order $\log {\rm vol}(M)$ for negatively curved surfaces under the spectral gap assumption), and extracting any kind of information from its analysis becomes difficult.

In turn, the term $\frac{{\rm vol}(M)}{\mu_2^{2r_1}}\exp\left(-C_7\exp(-hr_1) \lfloor r_2/r_1\rfloor\right)$ requires $\exp(-hr_1) \lfloor r_2/r_1\rfloor\rightarrow 0$ as ${\rm vol}(M)\rightarrow +\infty$, in particular $r_1=O(\log(r_2))$. We need $r_1$ largest possible due to \eqref{e:cauchyresult} and the fact that $\text{rank}(\text{Id}-P)$ is a decreasing function of $r_1$ (see Lemma \ref{l:taillernet} and Lemma \ref{l:spectralgap}). This explains our choice of $r_1=\Theta( \log\log {\rm vol}(M))$ and $r_2=\Theta(\log {\rm vol}(M))$.

In particular, the term ${\rm rank}({\rm Id}-P)$ in the right-hand side of \eqref{e:cauchyresult} cannot be made smaller than $\frac{{\rm vol}(M)}{\log\log{\rm vol}(M)}$ with our arguments (or $\frac{{\rm vol}(M)}{\log^\alpha {\rm vol}(M)}$,  if the spectral gap assumption is made). And we notice that this term is precisely the one which one would need to improve in order to enhance the final bound on $m$, since the bound \eqref{e:mm'} on $m'$ is indeed already much better.
\end{remark}

\subsection{Proof of Theorem \ref{t:extension}}\label{s:proofofextension}
Theorem \ref{t:extension} is a direct consequence of the Gauss-Bonnet formula together with the following result, which we prove in this section through elementary modifications of Section \ref{s:proofofth}.
\begin{theorem}\label{t:volumebound2} For any $j\in\mathbb{N}_{\geq 2}$, any $\rho,K,\beta>0$ and $b<0$ there exist $C_0, v_0>0$ such that for any closed, connected Riemannian surface $M$ with ${\rm inj}(M)\geq \rho$, ${\rm vol}(M)\geq v_0$, and Gaussian curvature $\geq b$, the number of eigenvalues in $[\lambda_j(M),(1+\frac{K}{\log^\beta {\rm vol}(M)})\lambda_j(M)]$ is at most $C_0(1+\frac{{\rm vol}(M)}{\log\log (3+{\rm vol}(M))})$.
\end{theorem}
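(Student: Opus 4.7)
\textbf{Proof plan for Theorem \ref{t:volumebound2}.} The plan is to follow the proof of Theorem \ref{t:volumebound} in Section \ref{s:proofofth}, with two adaptations: (a) replace the multiplicity of $\mu_2^{r_1}$ by an approximate count of eigenvalues near $\mu_j^{r_1}$, and (b) extend Lemma \ref{l:defofS} by replacing $\mu_2$ by $\mu_j$. Set $\mu_j := e^{-\lambda_j(M)}$ and $n := \lfloor r_2/r_1 \rfloor + 1$, and let $\alpha := e^{-r_1(1+K/\log^\beta \vol(M))\lambda_j}$. Denote by $N$ the number of $-\Delta$-eigenvalues in $[\lambda_j(M), (1+K/\log^\beta \vol(M))\lambda_j(M)]$, and by $N'$ the number of eigenvalues of $B := Pe^{r_1\Delta}P$ in $[\alpha,\mu_j^{r_1}]$. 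The Cauchy interlacing theorem (Theorem \ref{t:cauchy}), applied to the window, yields $N \leq N' + \text{rank}(\text{Id} - P)$, and positivity of $B$ gives $N' \alpha^{2n} \leq \text{Tr}(B^{2n})$.

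Sections \ref{s:errorterms} and \ref{s:gainestimates} apply verbatim since they only use the geometry of $M$ and the projection $P$; in particular Lemma \ref{l:gain} does not refer to any specific eigenvalue. The only nontrivial modification is in Lemma \ref{l:defofS}: I would show that there exists $S \subset M$ with $\nu(S) \leq C(j) \exp(3\cutchi r_2 \sqrt{|b|})$ such that $\|e^{r_1\Delta}|\varphi_x|\|^2 \leq \mu_j^{2r_1} + C_2 \exp(-\cutchi r_2)$ for every $x \notin S$. The proof is by contradiction: if $j$ points $x_1,\ldots,x_j$ at pairwise distances $> 3\cutchi r_2 + 4$ all violated this bound, the cross-terms $\langle e^{r_1\Delta}|\varphi_{x_i}|,e^{r_1\Delta}|\varphi_{x_k}|\rangle$ would still be bounded by $C_0 C_1 \exp(-\cutchi r_2)$ exactly as in \eqref{e:doctorant}, so for $C_2$ chosen large enough (depending on $j$), the $j$-dimensional subspace $W := \text{span}(|\varphi_{x_1}|,\ldots,|\varphi_{x_j}|)$ would satisfy $\min_{\varphi \in W,\ \|\varphi\|=1} \|e^{r_1\Delta}\varphi\|^2 > \mu_j^{2r_1}$, contradicting the max-min characterization of $\mu_j^{2r_1}$ as the $j$-th largest eigenvalue of $e^{2r_1\Delta}$. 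A packing argument then shows $S$ is covered by $j-1$ balls of radius $3\cutchi r_2 + 4$, giving the claimed volume bound.

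With this generalization in hand, the chain of inequalities in Section \ref{s:proofofth} runs identically with $\mu_j$ in place of $\mu_2$ and yields
$$
\text{Tr}(B^{2n}) \leq C_0 \vol(M) \mu_j^{2r_1 n}\bigl(1 - \tfrac{C_6}{2}\exp(-h r_1)\bigr)^{2n} + (\text{exponentially small error terms}),
$$
where $h$ is the constant introduced in Section \ref{s:proofofth}. Dividing by $\alpha^{2n}$ introduces the extra factor $(\mu_j^{r_1}/\alpha)^{2n} = \exp(2 n r_1 K \lambda_j / \log^\beta \vol(M))$, which is at most $\exp(O_j((\log \vol(M))^{1-\beta}))$ since $n r_1 = O(\log \vol(M))$ and $\lambda_j$ is bounded in terms of $(\rho,b,j)$ (see Appendix \ref{a:eigenvalues}). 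On the other hand, with $r_1 = c \log\log\vol(M)$, the gain factor is at most $\exp(-C (\log\vol(M))^{1-hc}/\log\log\vol(M))$. Choosing $c$ in \eqref{e:defr1r2} small enough that $hc < \beta$ makes the gain dominate the window loss, and we conclude $N' \leq C_0 \vol(M)/\log\log \vol(M)$ for $\vol(M) \geq v_0$. Combined with $\text{rank}(\text{Id} - P) \leq C_0 \vol(M)/r_1$ from Lemma \ref{l:taillernet}, this proves Theorem \ref{t:volumebound2}.

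The main obstacle is the generalization of Lemma \ref{l:defofS}: one must check that the cross-terms in the Gram matrix of the $e^{r_1\Delta}|\varphi_{x_i}|$ remain uniformly small enough to force a genuine contradiction with max-min on a $j$-dimensional subspace; if $j$ is absorbed into the constants this is straightforward but requires choosing $C_2$ larger as $j$ grows. The rest is bookkeeping, the only substantive parameter choice being that $c$ must be tuned as a function of $\beta$ to absorb the window loss $\exp(O((\log\vol(M))^{1-\beta}))$ into the gain.
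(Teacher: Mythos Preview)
Your proposal is correct and follows essentially the same approach as the paper: both generalize Lemma \ref{l:defofS} to $\mu_j$ (stated in the paper as Lemma \ref{l:adaptdefofS}), rerun the trace argument of Section \ref{s:proofofth} with $\mu_j$ in place of $\mu_2$ and with the constants $\cutchi,\cutheat$ adjusted via $C_j$, and then choose $c$ as a function of $\beta$ so that the gain $\exp\bigl(-C(\log\vol(M))^{1-hc}/\log\log\vol(M)\bigr)$ dominates the window loss $\exp\bigl(O(\log^{1-\beta}\vol(M))\bigr)$. Your $j$-dimensional max--min contradiction and packing argument for the generalized Lemma \ref{l:defofS} is in fact more explicit than the paper's, which records the lemma and calls its proof a ``straightforward adaptation''.
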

\begin{proof}
     We need the following straightforward adaptation of Lemma \ref{l:defofS}.
\begin{lemma}\label{l:adaptdefofS}
For any $j\in\N_{\geq 2}$, there exists $C_j'>0$ and a subset $S\subset M$ of area $\nu(S)\leq C_j'\exp(3\cutchi r_2\sqrt{|b|})$ such that for any $x\notin S$, 
\begin{equation*}
\|e^{r_1\Delta}|\varphi_x|\|^2\leq \mu_j^{2r_1}  +C_j'\exp(-\cutchi r_2).
\end{equation*}
where $\varphi_x$ has been introduced in Section \ref{s:applicationsminmax}.
\end{lemma}

Fix $j\in\N_{\geq 1}$ and $\beta,K>0$. We denote by $m'$ the number of eigenvalues of $Pe^{r_1\Delta}P$ contained in $[\mu_j^{r_1}(1-\delta),\mu_j^{r_1}]$ where $\delta=K\frac{\log\log {\rm vol}(M)}{\log^{\beta}{\rm vol}(M)}$. Compared to \eqref{e:choicecutchi}-\eqref{e:choicec}, the constants $\cutchi$ and $\cutheat$ are fixed using $C_j$ (coming from Lemma \ref{l:encadrement}) instead of $C_2$, and \eqref{e:choicec} is replaced by 
\begin{equation*}
\frac{\beta}{4}\geq \left(3\cutchi\sqrt{|b|} - 4\log C_j+25(1+|b|)(1+\cutheat)\right) c.
\end{equation*}
Instead of \eqref{e:summarym'} we obtain using Lemma \ref{l:adaptdefofS}
\begin{align*}
m'\mu_j^{2r_1(\lfloor r_2/r_1\rfloor+1)}(1-\delta)^{2\lfloor r_2/r_1\rfloor+2}& \leq C_0 {\rm vol}(M)\left(\mu_j^{r_1}\left(1-\frac{C_6}{2}\exp(-hr_1)\right)\right)^{2\lfloor r_2/r_1\rfloor}+C_0\exp(2\cutchi r_2\sqrt{|b|})\\
&\qquad \qquad +C_0\exp\left(-\frac{\cutchi^2 r_2}{32}\right){\rm vol}(M).
\end{align*}
Dividing by $\mu_j^{2r_1(\lfloor r_2/r_1\rfloor+1)}(1-\delta)^{2\lfloor r_2/r_1\rfloor+2}$ and proceeding as in Section \ref{s:proofofth}, we obtain instead of \eqref{e:mm'}
$$
m'\leq C_0 {\rm vol}(M)\exp\left( - \log^{1-\frac{\beta}{2}} {\rm vol}(M)\right) (1-\delta)^{-2\lfloor r_2/r_1\rfloor-2}
$$
and thanks to the definition of $\delta$ and the inequality $(1-\delta)^n\leq e^{-n\delta}$, we finally get for sufficiently large $g$
$$
m'\leq C_0 g\exp\left((4K\log^{1-\beta}{\rm vol}(M))-(\log^{1-\frac{\beta}{2}}{\rm vol}(M))\right)\leq C_0\frac{{\rm vol}(M)}{\log\log {\rm vol}(M)}.
$$
By the Cauchy interlacing theorem (Theorem \ref{t:cauchy}) we obtain that the number $m$ of eigenvalues of $e^{r_1\Delta}$ in $[\mu_j^{r_1}(1-\delta),\mu_j^{r_1}]$ is bounded above by $C_0\frac{{\rm vol}(M)}{\log\log {\rm vol}(M)}$. It implies the same bound for the number of eigenvalues of $e^{\Delta}$ in $[\mu_j(1-\frac{K}{2c\log^{\beta}{\rm vol}(M)}),\mu_j]$, and Theorem \ref{t:volumebound2} follows. 
\end{proof}

Theorem \ref{t:extension} follows directly from Theorem \ref{t:volumebound2} together with the Gauss-Bonnet formula which implies that ${\rm vol}(M)\leq \frac{4\pi}{|a|}g$ for $M\in\mathcal{M}_g^{(a,b,\rho)}$.

\begin{remark} \label{r:tracemethod}
In the present paper, we rely on the trace method to bound eigenvalue multiplicity. The natural time scale of the trace which we consider, namely $(Pe^{r_1\Delta}P)^{\lfloor r_2/r_1\rfloor+1}\approx e^{\lfloor r_2/r_1\rfloor r_1\Delta}$,  is $O(r_1\lfloor r_2/r_1\rfloor)=O(c\log{\rm vol}(M))$. With this time scale, it is impossible to distinguish eigenvalues that differ by $O(1/ \log{\rm vol}(M))$. Analogously, the spectral bounds obtained in \cite[Theorems 4 and 5]{laura} do not give precise information in spectral windows of size $\ll 1/\sqrt{\log(g)}$.

\end{remark}

\subsection{Scale-free version of Theorem \ref{t:volumebound}} \label{s:scalefree}
We conclude this section with a version of Theorem \ref{t:volumebound} which involves only quantities which are invariant under rescaling of the metric, in the spirit of \cite[Corollary 1.1]{hassan}. For a closed connected Riemannian surface $M$, we define $\kappa(M)$ as the smallest $\kappa\geq 0$ such that $c(x)\geq -\kappa$ for any $x\in M$. We set $$
\sf={\rm vol}(M)(\kappa(M)+{\rm inj}(M)^{-2})
$$ 
which is a scale-free quantity, meaning that if the metric on $M$ is multiplied by a factor $R>0$, $\sf$ remains unchanged.
\begin{theorem}\label{t:sublinearA2}
There exists $C_{0}>0$ such that for any closed, connected Riemannian surface $M$, the multiplicity of $\lambda_2(M)$ is at most  $C_0(1+\frac{\sf}{\log\log(3+\sf)})$.
\end{theorem}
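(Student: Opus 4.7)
The plan is to reduce Theorem \ref{t:sublinearA2} to Theorem \ref{t:volumebound} by a single rescaling of the metric, exploiting that both the multiplicity of $\lambda_2(M)$ and the scale-free quantity $\sf$ are invariant under multiplying the metric by a positive constant, while the individual quantities ${\rm inj}(M)$ and $\kappa(M)$ entering Theorem \ref{t:volumebound} are not. This is the same rescaling trick already used in Remark \ref{r:depC0} to pass from Theorem \ref{t:volumebound} to the explicit form of Theorem \ref{t:sublinearA}.

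Concretely, I would set $R = \sqrt{\kappa(M)} + {\rm inj}(M)^{-1}$, which is strictly positive since ${\rm inj}(M)$ is positive and finite for any closed Riemannian surface, and consider the rescaled surface $M'$ obtained by replacing the metric $g_M$ by $R^2 g_M$. Under this rescaling, distances are multiplied by $R$, so ${\rm vol}(M') = R^2\, {\rm vol}(M)$, ${\rm inj}(M') = R \cdot {\rm inj}(M) \geq 1$, and $c_{M'}(x) = R^{-2} c_M(x) \geq -\kappa(M)R^{-2} \geq -1$. The spectrum of $-\Delta_{M'}$ is $R^{-2}$ times that of $-\Delta_M$, so multiplicities are preserved: the multiplicity of $\lambda_2(M)$ equals that of $\lambda_2(M')$.

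I would then apply Theorem \ref{t:volumebound} to $M'$ with the universal parameters $\rho = 1$ and $b = -1$. The explicit dependence of the constant on $\rho$ and $b$ recorded in Remark \ref{r:depC0} guarantees a universal constant $C_0'$ (independent of $M$) such that the multiplicity of $\lambda_2(M')$ is at most
\[
C_0' \left( 1 + \frac{{\rm vol}(M')}{\log\log(3 + {\rm vol}(M'))} \right).
\]
Now ${\rm vol}(M') = \bigl(\sqrt{\kappa(M)} + {\rm inj}(M)^{-1}\bigr)^2 \, {\rm vol}(M) \leq 2\sf$ by the elementary inequality $(u+v)^2 \leq 2(u^2 + v^2)$, and the map $x \mapsto x/\log\log(3+x)$ is bounded on $[0, x_0]$ and nondecreasing on $[x_0, +\infty)$ for some $x_0$. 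Substituting and absorbing constants would then yield the bound claimed in Theorem \ref{t:sublinearA2}.

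The argument is a one-step scaling reduction and does not have a genuine hard part. The only mild subtleties concern edge cases: when $\kappa(M) = 0$ the formula simplifies to $R = {\rm inj}(M)^{-1} > 0$, and the hypothesis $c_{M'}(x) \geq -1$ holds a fortiori since $c_{M'} \geq 0$; when ${\rm vol}(M')$ is small, Theorem \ref{t:volumebound} still applies and the bound reduces to a uniform constant, which is compatible with the "$1 +$" term in the statement.
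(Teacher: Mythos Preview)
Your proposal is correct and follows essentially the same approach as the paper: rescale the metric so that ${\rm inj}(M')\ge 1$ and the curvature lower bound becomes $-1$, apply Theorem~\ref{t:volumebound} with the fixed parameters $\rho=1$, $b=-1$, and then bound ${\rm vol}(M')$ by a constant multiple of $G(M)$. The paper uses the scaling factor $R_0=\max(\kappa(M)^{1/2},{\rm inj}(M)^{-1})$, which gives ${\rm vol}(M_{R_0})\le G(M)$ directly, while your choice $R=\kappa(M)^{1/2}+{\rm inj}(M)^{-1}$ gives ${\rm vol}(M')\le 2G(M)$; the difference is inessential.
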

Theorem \ref{t:sublinearA2} improves (for surfaces) over the bound (1.14) in \cite{hassan}, which is linear in $\sf$. It is possible to prove scale-free bounds similar to Theorem \ref{t:sublinearA2} which generalize the second part of Theorem \ref{t:sublinearA} (with spectral gap assumption), and Theorems \ref{t:extension} and \ref{t:volumebound2}.
\begin{proof}[Proof of Theorem \ref{t:sublinearA2}]
Let $M$ be a closed connected Riemannian surface. Denote by $M_R$ the surface obtained by multiplying the metric on $M$ by $R>0$. For some $R_0\leq \max(\kappa(M)^{1/2},{\rm inj}(M)^{-1})$, we have ${\rm inj}(M_R)\geq 1$ and $\kappa(M_R)\leq 1$ for any $R\geq R_0$. Applying Theorem \ref{t:volumebound} to $M_{R_0}$, we obtain that the multiplicity of $\lambda_2(M_{R_0})$ is $\leq C_0(1+\frac{{\rm vol}(M_{R_0})}{\log\log (3+{\rm vol}(M_{R_0}))})$, and the same bound holds for the multiplicity of $\lambda_2(M)$ since multiplicity is preserved under scaling. Since ${\rm vol}(M_{R_0})\leq G(M)$, we get the result.
\end{proof}

\subsection{Proof of Proposition \ref{p:construction}}\label{s:construction}

Our proof of Proposition \ref{p:construction} essentially relies on the following lemma, extracted from \cite{ColboisCdV}.
\begin{lemma}[Extracted from \cite{ColboisCdV}]\label{l:cdvcolbois}
Let $G=(V,E)$ be a  non-oriented finite graph, possibly with loops and multiedges, whose vertices have degrees $d_i\geq 3$ for any $i\in V$, and whose edge lengths are denoted by $(\theta_{i,j})_{\{i,j\}\in E}$. Then there exists  a sequence of closed hyperbolic surfaces $(X^\varepsilon)_{\varepsilon>0}$ of genus $|E|-|V|+1$ whose first $|V|$ eigenvalues of the positive Laplacian $\lambda_1(\varepsilon)\leq \ldots\leq \lambda_{|V|}(\varepsilon)$ (repeated according to multiplicities) satisfy $\lambda_j(\varepsilon)=\varepsilon \zeta_j+O(\varepsilon^2)$ where $\zeta_1\leq \ldots\leq \zeta_{|V|}$ are the $|V|$ eigenvalues of the quadratic form
\begin{equation}\label{e:qtheta}
q_\theta(x)=\frac{1}{\pi}\sum_{\{i,j\}\in E}\theta_{i,j}|x_i-x_j|^2, \qquad x\in\R^V
\end{equation}
 on $L^2(V,\mu)$ and $\mu=2\pi\sum_{i\in V} (d_i-2)\delta_i$ with $\delta_i$ the Dirac mass on $i\in V$.
\end{lemma}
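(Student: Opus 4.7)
The plan is to construct $X^\varepsilon$ explicitly by hyperbolic surgery along $G$, with one ``building block'' per vertex, glued along short closed geodesics corresponding to edges. To each vertex $i \in V$ I would associate a genus-$0$ compact hyperbolic surface $\Sigma_i^\varepsilon$ with $d_i$ geodesic boundary components, where the one facing edge $\{i,j\}$ has length $\varepsilon\theta_{i,j}$ (for $d_i = 3$ this is simply a pair of pants, and for $d_i > 3$ one realizes $\Sigma_i^\varepsilon$ by gluing $d_i - 2$ pairs of pants along interior seams of bounded length). By Gauss--Bonnet the area of $\Sigma_i^\varepsilon$ converges to $-2\pi\chi(\Sigma_i^\varepsilon) = 2\pi(d_i - 2) = \mu(\{i\})$ as $\varepsilon \to 0$, which accounts for the weight $\mu$ appearing in the statement. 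I would then form $X^\varepsilon$ by identifying the matching boundary components of $\Sigma_i^\varepsilon$ and $\Sigma_j^\varepsilon$ along an isometry for each edge $\{i,j\} \in E$ (including loops, which correspond to gluing two boundaries of the same $\Sigma_i^\varepsilon$). A standard Euler characteristic computation $\chi(X^\varepsilon) = \sum_i(2 - d_i) = 2|V| - 2|E|$ then shows that $X^\varepsilon$ is a closed hyperbolic surface of genus $|E| - |V| + 1$, as required.

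The spectral asymptotic is obtained by a min--max comparison on $H^1(X^\varepsilon)$. For the upper bound, given $x \in \R^V$, I would build a test function $u_x^\varepsilon$ equal to the constant $x_i$ on $\Sigma_i^\varepsilon$ away from a neighborhood of its boundary, and interpolated across each short collar by the harmonic function on the collar with boundary values $x_i$ and $x_j$. A direct computation in Fermi coordinates shows that the Dirichlet energy of this harmonic function on a hyperbolic collar of core length $\varepsilon\theta_{i,j}$ equals $(\varepsilon\theta_{i,j}/\pi)(x_i - x_j)^2 + O(\varepsilon^2)$, which yields
\[
\int_{X^\varepsilon} |\nabla u_x^\varepsilon|^2 = \varepsilon \, q_\theta(x) + O(\varepsilon^2), \qquad \|u_x^\varepsilon\|_{L^2(X^\varepsilon)}^2 = \int_V |x|^2 \, d\mu + O(\varepsilon).
\]
Applying the min--max principle to the $|V|$-dimensional subspace spanned by $\{u_{e_j}^\varepsilon\}$, where $(e_j)$ is an eigenbasis of $q_\theta$, produces the upper bound $\lambda_j(\varepsilon) \leq \varepsilon\zeta_j + O(\varepsilon^2)$. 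For the matching lower bound, I would use Neumann bracketing: cutting $X^\varepsilon$ along the core geodesics of the collars into pieces diffeomorphic to the $\Sigma_i^\varepsilon$, one shows that each piece carries a first Neumann eigenvalue equal to $0$ (the constants) and a second Neumann eigenvalue bounded below by a positive constant independent of $\varepsilon$. Any function $v$ on $X^\varepsilon$ with Rayleigh quotient $O(\varepsilon)$ must then be $L^2$-close to being constant on each $\Sigma_i^\varepsilon$, and projecting $v$ onto the space of piecewise constant functions reduces the spectral problem on $X^\varepsilon$ to the discrete quadratic form $q_\theta$ on $L^2(V,\mu)$.

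The main obstacle is producing the quantitative $O(\varepsilon^2)$ error in the asymptotic, rather than mere $o(\varepsilon)$ convergence. This demands a resolvent perturbation argument: one first establishes that eigenfunctions associated to eigenvalues of order $\varepsilon$ are $O(\varepsilon^{1/2})$-close in $H^1$ to piecewise constant functions (using the uniform spectral gap on each building block), then iterates once with the explicit form of the collar harmonic functions to extract the second-order correction. Controlling the Neumann spectral gap on $\Sigma_i^\varepsilon$ uniformly in $\varepsilon$ follows from the fact that the building blocks converge (as $\varepsilon \to 0$) to hyperbolic surfaces with cusps, for which the relevant spectral gap is bounded away from zero. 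Once this Mosco-type spectral convergence is verified for the first $|V|$ eigenvalues, the family $(X^\varepsilon)_{\varepsilon > 0}$ is the one claimed by the lemma.
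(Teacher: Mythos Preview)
Your construction of $X^\varepsilon$ by gluing one hyperbolic building block per vertex along short boundary geodesics is exactly the one the paper sketches (following \cite{ColboisCdV}), and your Euler-characteristic genus count agrees. For the spectral asymptotic the paper simply cites the reduction in \cite{ColboisCdV} to a finite-dimensional quadratic form $q_\theta^\varepsilon$ on $\R^V$ with $q_\theta^\varepsilon/\varepsilon \to q_\theta$; your min--max upper bound with collar-harmonic test functions and your Neumann-bracketing lower bound are precisely the variational mechanism behind that reduction, so the two approaches coincide.
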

\begin{proof}[Sketch of proof of Lemma \ref{l:cdvcolbois} extracted from \cite{ColboisCdV}]
For any $i\in V$, we denote by $V_i$ the multiset of $j\in V$ such that $\{i,j\}\in E$ (the fact that $V_i$ is a multiset comes from the fact that we allow loops and multiedges). The degree of $i\in V$ is $d_i=|V_i|\geq 3$.

The authors of \cite{ColboisCdV} first construct a closed hyperbolic surface $X$ as follows: to the vertex $i\in V$ is associated $X_i$, a compact hyperbolic surface with $d_i$ free geodesics $(\gamma_{i,j})_{j\in V_i}$ on its boundary, by gluing $d_i-2$ pants (see \cite[Section VI]{ColboisCdV} and its figures for the case of the complete graph). This is done in a way that the length $\ell(\gamma_{i,j})$ is equal to $\theta_{i,j}$. To construct the surface $X$ we glue the pieces $X_i$ as indicated by the graph $G$: for $\{i,i'\}\in E$, we glue $X_i$ and $X_{i'}$ by identifying $\gamma_{i,i'}$ with $\gamma_{i',i}$ without twist. In particular if $i=i'$, i.e. the edge $\{i,i'\}$ is a loop, we identify without twist one $\gamma_{i,i}$ with another $\gamma_{i,i}$.

In \cite[Section II]{ColboisCdV}, the authors construct from $X$ a family of closed hyperbolic surfaces $X^\varepsilon$ ($0<\varepsilon\leq 1$) as follows. The geodesics in the pant decomposition of $X$ which do not belong to the boundary of one of the $X_i$, $i\in V$, remain of fixed length. For $\{i,j\}\in E$, the geodesic $\gamma_{i,j}$ of $X$ is replaced in $X^\varepsilon$ by a geodesic $\gamma^\varepsilon_{i,j}$ of length $\ell^\varepsilon_{i,j}=\varepsilon \theta_{i,j}$. Note that $\vol(X_i^\varepsilon)=\vol(X_i)=2\pi(d_i-2)$ for any $i\in V$, by the Gauss-Bonnet formula.


Then, in \cite[Section V]{ColboisCdV}, the authors consider the measure $\mu=2\pi\sum_{i\in V} (d_i-2)\delta_i$ on $G$, and the quadratic form $q_\theta$ on $L^2(V,\mu)$ given  by \eqref{e:qtheta},
which is the Dirichlet form on $G$ endowed with edge lengths $\theta=(\theta_{i,j})_{\{i,j\}\in E}$.
Then in \cite[Sections I and V]{ColboisCdV} they  exhibit a quadratic form $q_\theta^\varepsilon$ on $L^2(V,\mu)$ (depending continuously on the geometric parameter $\theta$) whose spectrum is the set of first $|V|$ eigenvalues of $X^\varepsilon$ and such that $\lim_{\varepsilon\rightarrow 0}\|(q_\theta^\varepsilon/\varepsilon)-q_\theta\|=0$, uniformly in $\theta\in W$ for every compact $W\Subset (\R_{>0})^E$. In particular it implies that the eigenvalues $(\lambda_i(\varepsilon))_{i\in V}$ of $X^\varepsilon$ verify $\lambda_i(\varepsilon)\sim \varepsilon\zeta_i$ ($\varepsilon\rightarrow 0$) where the $(\zeta_i)_{i\in V}$ are the eigenvalues of $q_\theta$ on $L^2(V,\mu)$.

The genus of $X^\varepsilon$ is equal to $(p+2)/2$ where $p=\sum_{i\in V} (d_i-2)=2|E|-2|V|$ is the number of pants used in the decomposition. This concludes the proof of Lemma \ref{l:cdvcolbois}. 
\end{proof}
\begin{proof}[End of the proof of Proposition \ref{p:construction}]
Let $n\in\mathbb{N}_{\geq 3}$. We consider the star graph $F_n$ with $n$ branches (i.e. $n$ edges and $n+1$ vertices). Since this graph has leaves (vertices of degree $1$), we cannot apply Lemma \ref{l:cdvcolbois} directly to $F_n$. Therefore, we also consider $G_n$ the graph obtained by adding a loop at each of the $n$ leaves of $F_n$. The central vertex of $G_n$ has degree $n$, and all $n$ other vertices of $G_n$ have degree $3$. All edges of $F_n$ and $G_n$ have length $1$. We denote by $V_n$ the vertex set of $G_n$, and by $E_n$ its multiset of edges. We have $|V_n|=n+1$ and $|E_n|=2n$. Finally, we set $\mu_n=2\pi\sum_{i\in V_n}(d_i-2)\delta_i$.

The eigenvalues of the quadratic form $q_\theta$ given by \eqref{e:qtheta} (for $G_n$) on $L^2(V_n,\mu_n)$ are equal to those of the (positive) Laplacian on $G_n$ given by
$$
(\Delta x)_i=\frac{1}{d_i-2}\sum_{j\in V_i}x_i-x_j
$$
(see e.g. \cite[Section 4]{ColArnold}). Its eigenvalues are $\frac{2n-2}{n-2}$, $1$ and $0$, with respective multiplicities $1, n-1$ and $1$.

For a given $n$, we use Lemma \ref{l:cdvcolbois} for sufficiently small $\varepsilon_n$. We obtain a closed connected hyperbolic surface $M_{g_n}$ of genus 
\begin{equation}\label{e:gendelasurface}
g_n=|E_n|-|V_n|+1=n.
\end{equation}
having at least $n-1$ eigenvalues in $[\lambda_2(M_{g_n}), (1+C_n\varepsilon_n)\lambda_2(M_{g_n})]$ for some $C_n>0$ which does not depend on $\varepsilon_n$. Taking $\varepsilon_n<\varepsilon_{g_n}/C_n$, this concludes the proof.
\end{proof}

\appendix 
\section{Appendix}\label{a:elementary}
We gather in this appendix several statements and proofs of elementary facts that are used throughout the proof of our main results.

\subsection{Eigenvalues and trace}\label{a:eigenvalues}
We first prove an infinite-dimensional version of the Cauchy interlacing theorem (see also \cite[Theorem 2]{dancis}).
\begin{theorem}[Cauchy interlacing theorem] \label{t:cauchy}
Let $A$ be a positive compact self-adjoint operator on a Hilbert space $H$. Let $P=P^\top$ be an orthogonal projection onto a subspace of $H$ of codimension $k\in\mathbb{N}$. We denote by $\alpha_1\geq \alpha_2\geq \ldots$ the eigenvalues of $A$,  and by  $\beta_1\geq \beta_2\geq \ldots$ those of $B=PAP$. Then for any $j\in\N$,
$$
\alpha_j\geq \beta_j\geq \alpha_{j+k}.
$$
\end{theorem}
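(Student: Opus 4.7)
The plan is to reduce everything to the Courant--Fischer min-max characterization, which applies to positive compact self-adjoint operators on a Hilbert space (see e.g.\ \cite[Theorem XIII.1]{reed}). For such an operator $A$ with nonincreasing eigenvalues $\alpha_1 \geq \alpha_2 \geq \ldots$ (counted with multiplicity, augmented by zeros if $A$ has finite-dimensional range), one has
\begin{equation*}
\alpha_j \;=\; \max_{\substack{S \subset H \\ \dim S = j}} \ \min_{\substack{v \in S \\ \|v\|=1}} \langle Av,v\rangle.
\end{equation*}
Since $B = PAP$ is also positive, compact, self-adjoint, and its nonzero spectrum coincides with the spectrum of the restriction of $B$ to the closed subspace $PH$ of codimension $k$, the same formula holds for $\beta_j$ with $S$ ranging over $j$-dimensional subspaces of $PH$.

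For the upper bound $\beta_j \leq \alpha_j$, I would observe that any $j$-dimensional subspace $S \subset PH$ is in particular a $j$-dimensional subspace of $H$, and that for $v \in S$ we have $Pv = v$, so
\begin{equation*}
\langle Bv,v\rangle \;=\; \langle PAPv,v\rangle \;=\; \langle APv,Pv\rangle \;=\; \langle Av,v\rangle.
\end{equation*}
Taking the max over the (smaller) family of subspaces lying inside $PH$ immediately yields $\beta_j \leq \alpha_j$.

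For the lower bound $\beta_j \geq \alpha_{j+k}$, I would construct a concrete $j$-dimensional test subspace inside $PH$. Let $E_{j+k}$ denote the span of eigenvectors of $A$ associated with the $j+k$ largest eigenvalues. Because $\mathrm{Id} - P$ has rank $k$, the kernel of the restriction $P|_{E_{j+k}}$ has dimension at most $k$, so $E_{j+k} \cap PH$ has dimension at least $j$. Picking any $j$-dimensional subspace $S \subset E_{j+k} \cap PH$, every unit vector $v \in S$ satisfies both $Pv = v$ and $\langle Av,v\rangle \geq \alpha_{j+k}$, hence $\langle Bv,v\rangle = \langle Av,v\rangle \geq \alpha_{j+k}$. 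The min-max characterization for $\beta_j$ then gives the desired inequality.

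There is no real obstacle here; the only point requiring a moment of care is that min-max is invoked in infinite dimension, which is legitimate since $A$ and $B$ are positive compact self-adjoint, so their spectra consist of a nonincreasing sequence of eigenvalues accumulating only at $0$. The dimension count $\dim(E_{j+k} \cap PH) \geq j$ is elementary from $\mathrm{rank}(\mathrm{Id}-P) = k$ and needs no topological subtlety since $E_{j+k}$ is finite-dimensional.
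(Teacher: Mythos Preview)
Your proof is correct and follows essentially the same approach as the paper's: both derive the two inequalities directly from the Courant--Fischer min-max principle. The only minor difference is in the lower bound, where you construct a $j$-dimensional test subspace inside $PH$ from eigenvectors of $A$ (via the dimension count $\dim(E_{j+k}\cap PH)\geq j$), whereas the paper works with eigenvectors of $B$ and invokes the dual codimension form of min-max for $A$; both routes are equally short and standard.
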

\begin{proof}[Proof of Theorem \ref{t:cauchy}]
Since $B$ is compact and self-adjoint, the spectral theorem provides a basis $(b_j)_{j\in\mathbb{N}}$ of normalized eigenvectors of $B$, with $Bb_j=\beta_j b_j$ for any $j\in\N$ (notice that to order the $\beta_j$ we use that $B\ge 0$). We set $S_j=\text{Span}(b_1,\ldots,b_j)$ and we notice that $S_j\subset\text{Im}(P)$. We compute
\begin{align*}
\beta_j&=\underset{x\in S_j,\ \|x\|=1}{\min}(PAP x,x)=\underset{x\in S_j,\ \|x\|=1}{\min}(A x,x)\\
&\leq \underset{V, \ \text{dim}(V)=j }{\max}\ \ \underset{x\in V,\ \|x\|=1}{\min}(A x,x)=\alpha_j.
\end{align*}
Also, noticing that $P S_{j-1}^\perp$ has codimension at most $k+j-1$ we obtain
\begin{align*}
\beta_j&=\underset{x\in S_{j-1}^\perp,\ \|x\|=1}{\max}(PAP x,x)\geq \underset{x\in P S_{j-1}^\perp,\ \|x\|=1}{\max}(PAPx,x)\\
&=\underset{x\in P S_{j-1}^\perp,\ \|x\|=1}{\max}(Ax,x)
\geq \underset{V,\ \text{codim V}\leq k+j-1}{\min}\ \ \underset{x\in V,\ \|x\|=1}{\max}(Ax,x)= \alpha_{k+j}
\end{align*}
which concludes the proof.
\end{proof}
We recall the following estimate:
\begin{lemma}[Upper bound on eigenvalues] \label{l:encadrement}
For any $b\in\R$ and any $j\in\N_{\geq 2}$, there exists $C_j>0$ such that any closed surface $M$ with curvature bounded below by $b$ verifies  $\lambda_j(M)\leq C_j$. 
\end{lemma}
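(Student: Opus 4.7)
The plan is to apply the Courant–Fischer min–max principle and construct a $j$-dimensional test subspace of $H^1(M)$ on which the Rayleigh quotient is uniformly bounded in $b$ and $j$. The test functions will be $j$ Lipschitz bumps with pairwise disjoint supports.

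First I would fix a radius $r_0 > 0$ depending only on $b$ and select a maximal $2r_0$-separated set $x_1,\ldots,x_m \subset M$ by the same greedy procedure as in Lemma \ref{l:taillernet}. The balls $B_d(x_i, r_0)$ are then automatically pairwise disjoint. By the Bishop–Gromov upper bound \eqref{e:volumeballmodel}, $\vol(B_d(x_i, r_0)) \leq V_0(b, r_0)$, which combined with the packing argument around \eqref{e:argseparation} gives $m \geq \vol(M)/V_0$. Thus once $\vol(M)$ exceeds $j V_0$, the first $j$ of these points furnish the disjoint supports needed to host the test functions.

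On each ball $B_d(x_i, r_0)$ I would place the standard Lipschitz cutoff $f_i(x) = \max(r_0 - d(x, x_i), 0)$, which has $|\nabla f_i| \leq 1$ on its support. The upper bound $\int_M |\nabla f_i|^2 d\nu \leq \vol(B_d(x_i, r_0)) \leq V_0$ again follows from Bishop–Gromov, while the lower bound $\int_M f_i^2 d\nu \geq (r_0/2)^2 \vol(B_d(x_i, r_0/2))$ is made uniform in $M$ via Croke's volume lower estimate \cite[Proposition 14]{croke}. Both bounds depend only on $r_0$ and $b$, so the Rayleigh quotient of every nonzero element of $\mathrm{span}(f_1,\ldots,f_j)$ is uniformly bounded by a constant $C_j$, and the min–max principle yields $\lambda_j(M) \leq C_j$.

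The main obstacle is the complementary small-volume regime, in which the greedy selection fails to supply $j$ disjoint balls of radius $r_0$. For such surfaces, the Gauss–Bonnet theorem combined with the curvature lower bound $c(x) \geq b$ forces the Euler characteristic to lie in a finite set, and one may then treat each remaining topological type separately via a rescaling-and-compactness argument on metrics of fixed volume with curvature bounded below by $b$; in the context of the paper this regime is anyway excluded by the standing assumption $\vol(M) \geq 3$ in Section \ref{s:preliminaries}.
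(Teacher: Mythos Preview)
Your approach via explicit disjointly-supported test functions is different from the paper's, which instead bounds the diameter of $M$ from below and then invokes Cheng's eigenvalue comparison \cite[Corollary~2.3]{cheng75}: Gauss--Bonnet together with $c\geq b$ gives $\vol(M)\geq 4\pi/|b|$ (for genus $\geq 2$, the case actually used), and then $\vol(M)\leq \Vol_{\hyp}(B_{\hyp}(x,d))$ via \eqref{e:volumeballmodel} forces the diameter $d$ to be bounded below by a constant depending only on $b$; Cheng's theorem then bounds $\lambda_j$ in terms of $d$ and the Ricci lower bound.

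Your argument has two genuine gaps. First, the appeal to \cite[Proposition~14]{croke} to lower-bound $\vol(B_d(x_i,r_0/2))$ uniformly is not legitimate here: Croke's bound is in terms of the injectivity (or conjugate) radius, and the lemma carries no such hypothesis; a surface with a very thin collar can have balls of radius $r_0/2$ of arbitrarily small volume. The clean repair is to avoid bounding numerator and denominator separately and instead control their ratio directly via Bishop--Gromov: $\vol(B_d(x_i,r_0))/\vol(B_d(x_i,r_0/2))\leq V_b(r_0)/V_b(r_0/2)$, so the Rayleigh quotient of each $f_i$ is bounded by a constant depending only on $b$ and $r_0$. Second, your small-volume case is not actually handled. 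A ``rescaling-and-compactness'' argument does not work as stated (rescaling changes the curvature lower bound, and a curvature lower bound plus a volume bound alone do not yield precompactness of the space of metrics), and the standing assumption $\vol(M)\geq 3$ does not rescue you since your threshold $jV_0$ may well exceed $3$. The correct fix is to choose $r_0$ small enough, depending on $j$ and $b$, so that $jV_0(b,2r_0)\leq 4\pi/|b|$; then the Gauss--Bonnet lower bound on $\vol(M)$ (again for genus $\geq 2$) already guarantees $m\geq j$ and no separate small-volume case is needed.
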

\begin{proof} 
The diameter $d$ of a closed surface $M$ with curvature bounded below by $b$ is bounded below since for any $x\in\widetilde{M}$,
$$
\frac{4\pi}{|b|}\leq \vol(M)\leq \Vol_{\hyp}(B_{\hyp}(x,d))\leq \frac{4\pi}{|b|}\sinh^2\left(\frac{d}{2}\sqrt{|b|}\right)
$$
where the left-hand side comes from Gauss-Bonnet and the right-hand side from \eqref{e:volumeballmodel}.
Combining with \cite[Corollary 2.3]{cheng75} we get the result.
\end{proof}

\begin{lemma}[Computation of the trace]\label{l:tracedelta}
For any $n\in\mathbb{N}_{\geq 1}$ and $t\geq 1$, there holds
$$
{\rm Tr}((Pe^{t\Delta}P)^{2n})=\int_M \|(Pe^{t\Delta}P)^n\delta_x\|^2 d\nu(x).
$$
\end{lemma}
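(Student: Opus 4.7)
The plan is to recognize the right-hand side as the squared Hilbert--Schmidt norm of $T := (Pe^{t\Delta}P)^n$ and then conclude via the standard identity $\|T\|_{\text{HS}}^2 = \text{Tr}(T^*T)$. Since $M$ is closed and $t\geq 1$, the heat operator $e^{t\Delta}$ is trace-class on $L^2(M,\nu)$ with smooth kernel $K_t$, and $P$ is a bounded self-adjoint operator, so $Pe^{t\Delta}P$ and all its powers are trace-class. Because both $P$ and $e^{t\Delta}$ are self-adjoint, $Pe^{t\Delta}P$ is self-adjoint, and thus $T = (Pe^{t\Delta}P)^n$ is self-adjoint as well. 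Consequently $(Pe^{t\Delta}P)^{2n} = T^2 = T^*T$, and therefore $\text{Tr}((Pe^{t\Delta}P)^{2n}) = \|T\|_{\text{HS}}^2$.

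Second, I would identify the integral kernel of $T$. Writing $P = \text{Id} - \sum_{k\in\net}\psi_k\otimes\psi_k$, which is a finite-rank perturbation of the identity by bounded functions $\psi_k$, the operator $T$ admits an integral kernel $\mathcal{K}_n\in L^2(M\times M)$ (in fact smooth), obtained by iteratively composing the smooth kernel $K_t$ with the action of $P$ on each side. Using \eqref{e:Pdeltax} together with the smoothing property of $e^{t\Delta}$, the object $T\delta_x$ is well-defined as an element of $L^2(M,\nu)$, given explicitly by the function $y\mapsto \mathcal{K}_n(y,x)$, so $\|T\delta_x\|^2 = \int_M |\mathcal{K}_n(y,x)|^2\, d\nu(y)$.

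Finally, by Fubini,
\[
\int_M \|T\delta_x\|^2\, d\nu(x) = \int_{M\times M} |\mathcal{K}_n(y,x)|^2\, d\nu(y)\, d\nu(x) = \|T\|_{\text{HS}}^2,
\]
which together with the first step yields the claimed identity. The only mild subtlety — not really an obstacle — is justifying rigorously that $T\delta_x$ coincides with the $L^2$ slice $\mathcal{K}_n(\cdot, x)$; this is standard and follows from the fact that the rightmost factor $Pe^{t\Delta}P$ maps the distribution $\delta_x$ to a bounded smooth function via the smoothing action of the heat semigroup, after which the remaining factors preserve $L^2$. Alternatively, one can start from the orthonormal-basis formula $\|T\|_{\text{HS}}^2 = \sum_j\|Te_j\|^2$ and pass to its continuous analogue via the kernel representation.
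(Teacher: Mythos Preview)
Your proposal is correct and takes essentially the same approach as the paper. The paper works with an orthonormal eigenbasis $(u_j)$ of $Q^n$ and defines $u_x=\sum_j \zeta_j u_j(x)u_j$, which is precisely your kernel slice $\mathcal{K}_n(\cdot,x)$; the computation $\text{Tr}(Q^{2n})=\sum_j\zeta_j^2=\int_M\|u_x\|^2\,d\nu$ is your Hilbert--Schmidt identity in eigenbasis form, and the paper then verifies via a distribution pairing that $Q^n\delta_x=u_x$, exactly the ``mild subtlety'' you flag.
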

\begin{proof}
We set $Q=Pe^{t\Delta}P$. Let $(u_j)_{j\in\mathbb{N}}$ denote an orthonormal basis of eigenfunctions of the compact and self-adjoint operator $Q^n$, with associated eigenvalues $\zeta_j$. For any $x\in M$ we set
$u_x=\sum_{j\in\N} \zeta_ju_j(x)u_j$.
We know that $Q^{2n}$ is trace-class since $e^\Delta$ is trace-class, and
\begin{equation}\label{e:exprtrace}
{\rm Tr}(Q^{2n})=\sum_{j\in\N} \zeta_j^2=\int_M\left(\sum_{j\in\N} \zeta_j^2u_j(x)^2\right)\nu(dx)=\int_M\|u_x\|^2\nu(dx).
\end{equation}
In particular, $u_x\in L^2(M,\nu)$  for $\nu$-almost every $x\in M$. For any such $x$ and  any $f\in C^\infty(M)$, written as $f=\sum_{j\in\N}a_ju_j$, we have
\begin{align}
\langle Q^n\delta_x,f\rangle_{\mathcal{D}',\mathcal{D}}&=\langle P\delta_x,e^{t\Delta}PQ^{n-1}f\rangle_{\mathcal{D}',\mathcal{D}}\nonumber=e^{t\Delta}PQ^{n-1}f(x)-\sum_{k\in\net} \langle e^{t\Delta}PQ^{n-1}f,\psi_k\rangle \psi_k(x)\nonumber\\
&= Q^nf(x)=\sum_{j\in\N} a_j\zeta_ju_j(x)=\int_Mu_x(y)f(y)\nu(dy) \nonumber
\end{align}
where the first equality comes from the fact that the transpose (in the sense of distributions) of the continuous linear map from smooth functions to smooth functions $e^{t\Delta}PQ^{n-1}$ is $Q^{n-1}Pe^{t\Delta}$; and the second equality follows from \eqref{e:Pdeltax}.
We deduce from this computation that $Q^n\delta_x$ coincides with the distribution $\langle u_x,\cdot\rangle_{L^2(M,\nu)}$, which is identified to $u_x\in L^2(M,\nu)$. Plugging into \eqref{e:exprtrace}, this concludes the proof.
\end{proof}

\subsection{Heat kernel: comparison and estimates}\label{a:heatkernel}
We provide here the proofs of Lemma \ref{l:heatkernelesthyp} and \ref{l:heatkernelinM} on the heat kernel in $\hyp$ and $M$.
\begin{proof}[Proof of Lemma \ref{l:heatkernelesthyp}]

We recall from \cite[Theorem 5.7.2]{davies} that there exist constants $c_1,c_2>0$ such that for any $\eta, t>0$,
\begin{equation*}
c_1 g_1(t,\eta)\leq k_t^{\mathbb{H}^2}(\eta)
\end{equation*}
where $k^{\mathbb{H}^2}$ denotes the heat kernel in the hyperbolic plane (equal to $k^{(-1)}$ with the notation of Lemma \ref{l:comparisonheat}) and
$$
g_1(t,\eta)=\frac{1}{t}\frac{1+\eta}{(1+\eta+t)^{\frac12}}\exp\left(-\frac{t}{4}-\frac{\eta}{2}-\frac{\eta^2}{4t}\right).
$$
For $K<0$ we consider 
$$
g_{|K|}(t,\eta)=|K|g_1(|K|t,|K|^{\frac12}\eta)=\frac{1}{t}\frac{1+|K|^{\frac12}\eta}{(1+|K|^{\frac12}\eta+|K|t)^{\frac12}}\exp\left(-\frac{|K|t}{4}-\frac{|K|^{\frac12}\eta}{2}-\frac{\eta^2}{4t}\right),
$$
which is the analogue of $g_1$ on the space form $\widetilde{M}_K$ introduced in Section \ref{s:notation}. Using Lemma \ref{l:comparisonheat} we obtain for the heat kernel $k_t(\cdot,\cdot)$ in $\hyp$ that 
\begin{equation}\label{e:fundaheat}
c_1g_{|b|}(t,d_{\hyp}(x,y))\leq k_t(x,y)
\end{equation}
for any $x,y\in\hyp$ and any $t>0$. Combining \cite[Theorem 3]{davies2} and \cite[Proposition 14]{croke} we also get the bound
\begin{equation}\label{e:fundaheat2}
k_t(x,y) \leq C_0\frac{1}{t}\left(1+\frac{d_{\widetilde{M}}(x,y)^2}{t}\right)\exp\left( -\frac{d_{\widetilde{M}}(x,y)^2}{4t}\right)
\end{equation} 
where $C_0>0$ depends on $b$ and $\rho$.

For \eqref{e:L1norm}, we set for $n\in\N$
$$
A_n=\left\{y\in\hyp \mid Ct+n\leq d_{\hyp}(x,y)<Ct+ n+1\right\}\subset \hyp.
$$
Then $\Vol_{\hyp}(A_n)\leq \Vol_{\hyp}(B(x,Ct+n+1))\leq C_0 \exp((Ct+n)\sqrt{|b|})$ according to \eqref{e:volumeballmodel}.
We write $\hyp\setminus B_{\hyp}(x,\cutheat t)=\bigcup_{n=0}^\infty A_n$, and then using \eqref{e:fundaheat2} and the fact that $C,t\geq 1$, we obtain 
\begin{align}
\|k_t(x,\cdot)\|_{L^1(\hyp\setminus B_{\hyp}(x,\cutheat t))}&\leq C_3 \sum_{n=0}^\infty \frac{(\cutheat t+n)^2}{t} \exp\left(-\frac{(\cutheat t+n)^2}{4t}\right)\Vol_{\hyp}(A_n)\nonumber\\
&\leq C_3\int_{\cutheat t-1}^\infty  \frac{\eta^2}{t}\exp\left(-\frac{\eta^2}{4t}\right)\exp(\eta\sqrt{|b|})d\eta \nonumber\\
&\leq C_3\exp(|b|t)\int_{\cutheat t-1}^\infty \frac{\eta^2}{t}\exp\left(-\frac{(\eta-2t\sqrt{|b|})^2}{4t}\right)d\eta \label{e:change}
\end{align}
where $C_3=\frac{C_0}{\rho^2}(1+|b|^2)$ for some universal constant $C_0 >0$.
We make the change of variables $\eta'=\eta-2t\sqrt{|b|}$ and we use that $\cutheat t-1 - 2t\sqrt{|b|}\geq 3\cutheat t/4$ and $\eta+2t\sqrt{|b|}\leq 2\eta$ for $\eta\geq 3Ct/4$ to obtain that \eqref{e:change} is bounded above by
$$C_3\exp(|b|t)\int_{3\cutheat t/4}^\infty \frac{\eta^2}{t}\exp\left(-\frac{\eta^2}{4t}\right)d\eta.$$ 
Computing this last integral gives the result. 

For \eqref{e:variationsalpha}, we set $\eta=d_{\hyp}(x,y)$ and $\alpha=d_{\hyp}(x,z)-d_{\hyp}(x,y)$. We have, using again \eqref{e:fundaheat} and \eqref{e:fundaheat2},
\begin{equation}\label{e:ktalpha2}
\frac{k_t(x,z)}{k_t(x,y)} \geq C_4\frac{tg_{|b|}(t,\eta+\alpha)}{\left( 1 + \frac{\eta^2}{t}\right)\exp\left( -\frac{\eta^2}{4t}\right)}=C_4\frac{t\left(1+|b|^{\frac12}(\eta+\alpha)\right)}{\left(1 + \frac{\eta^2}{t}\right)\left(1+|b|^{\frac12}(\eta+\alpha)+|b|t\right)^{\frac12}}h(\alpha,\eta)
\end{equation}
where 
\begin{align}
h(\alpha,\eta)&=\exp\left(-\frac{|b|t}{4}-\frac{|b|^{\frac12}(\eta+\alpha)}{2}-\frac{(\eta+\alpha)^2}{4t}+\frac{\eta^2}{4t}\right)\nonumber \\
&\geq C_5\exp\left(-\frac{|b|t}{4}-\frac{|b|^{\frac12}(\cutheat +4)t}{2}-2\cutheat (t+1)-4t\right)\nonumber\\
&\geq C_5\exp\left(-\frac{|b|t}{4}-\frac{(1+|b|)(\cutheat +4)t}{4}-4(C+1)t\right)\label{e:upfaleph2}
\end{align}
where we used in the second line $\eta\leq Ct$ and $|\alpha|\leq 4t +4$, and in the last line that $(1+|b|) \geq 2|b|^{\frac12}$ and $t\geq 1$.
Combining \eqref{e:ktalpha2} and \eqref{e:upfaleph2}, and using $\eta\leq Ct$ and $|\alpha|\leq 4t +4$ again, we get \eqref{e:variationsalpha}.
\end{proof}

\begin{proof}[Proof of Lemma \ref{l:heatkernelinM}]
We write $M=\Gamma\backslash\hyp$. We prove that there exists $C_0>0$ universal such that for any $\eta\geq 0$ and any  $\bar{x}\in\hyp$, the number of elements $\gamma\in\Gamma$ such that $d_{\hyp}(\bar{x},\gamma\bar{x})<\eta+1$ is at most $C_0 \rho^{-2}e^{\eta\sqrt{|b|}}$.  By definition of the injectivity radius $\rho$, the open balls $B_\gamma$ of center $\gamma\bar{x}$ and radius $\rho/2$, for $\gamma\in\Gamma$, are disjoint. If $\gamma\in\Gamma$ is such that $d_{\hyp}(\bar{x},\gamma \bar{x})< \eta+1$, then $B_\gamma$ is included in the ball of center $\bar{x}$ and radius $\eta+1+\rho/2$. According to \eqref{e:volumeballmodel}, the volume of a ball of radius $\eta+1+\rho/2$ in $M$ is at most $\frac{4\pi}{|b|}\sinh^2(\frac12 (\eta+1+\rho/2)\sqrt{|b|})$, and according to \cite[Proposition 14]{croke}, the volume of a ball of radius $\rho/2$ is at least $C_1\rho^2>0$. Therefore, the number of $\gamma\in\Gamma$ such that $d_{\hyp}(\bar{x},\gamma\bar{x})<\eta+1$ is smaller than 
$$
\frac{C_0}{\rho^2|b|}\sinh^2\left(\frac12\left(\eta+1+\frac{\rho}{2}\right)\sqrt{|b|}\right)
$$
which in turn is bounded above by $C_0|b|^{-1}\rho^{-2}e^{\eta\sqrt{|b|}}$.

As a consequence, for any $\bar{x},\bar{y}\in\hyp$ and $\eta\geq 0$,
\begin{equation}\label{e:cardinal}
\#\{\gamma\in\Gamma \mid \eta\leq d(\bar{x},\gamma\bar{y})<\eta+1\}\leq \frac{C_0}{|b|\rho^2}e^{2\eta\sqrt{|b|}}.
\end{equation}
Below, $\bar{x}$, $\bar{y}$ are lifts of given $x,y\in M$ to a fundamental domain of $M$ in $\hyp$.
For any $y\in M$ we have, using \eqref{e:fundaheat2} in the first line and \eqref{e:cardinal} in the second line,  
\begin{align*}
\sum_{\gamma\in\Gamma} k_t(\bar{x},\gamma\bar{y})
&\leq C_1 \sum_{\eta=0}^\infty \left(\#\{\gamma\in\Gamma \mid \eta\leq d(\bar{x},\gamma\bar{y})<\eta+1\}\right) \frac{1}{t}\left(1+\frac{\eta^2}{t}\right)\exp\left(-\frac{\eta^2}{4t}\right)\\
&\leq C_1 \sum_{\eta=0}^\infty\frac{1}{t}\left(1+\frac{\eta^2}{t}\right)\exp\left(2\eta\sqrt{|b|}-\frac{\eta^2}{4t}\right)
\end{align*}
where $C_1 > 0$ depends on $b$ and $\rho$. For any $t>0$ this sum converges. Using then a series-integral comparison for the last inequality (cutting the sum at $\eta=4t\sqrt{|b|}$) and \eqref{e:heatkernelinM} we get the result.
\end{proof}


\begin{thebibliography}{1}
\bibitem[ABBGNRS17]{samourais}
Miklos Abert, Nicolas Bergeron, Ian Biringer, Tsachik Gelander, Nikolay Nikolov, Jean Raimbault and Iddo Samet.
\textit{On the growth of $L^2$-invariants for sequences of lattices in Lie groups.} 
Annals of Mathematics 185.3 (2017): 711-790.


\bibitem[Bes80]{besson}
Gérard Besson. 
\textit{Sur la multiplicité de la première valeur propre des surfaces Riemanniennes.}
Annales de l'Institut Fourier 30.1 (1980): 109-128.

\bibitem[Bus10]{buser}
Peter Buser. 
\textit{Geometry and spectra of compact Riemann surfaces}.
Springer Science \& Business Media, 2010.

\bibitem[Bus77]{buserriem}
Peter Buser.
\textit{Riemannsche Flächen mit Eigenwerten in $(0,1/4)$.}
Commentarii Mathematici Helvetici 52 (1977): 25-34.

\bibitem[Bus82]{buseriso}
Peter Buser. 
\textit{A note on the isoperimetric constant.} Annales scientifiques de l'École normale supérieure 15.2 (1982): 213-230.

\bibitem[Che75]{cheng75}
Shiu-Yuen Cheng. 
\textit{Eigenvalue comparison theorems and its geometric applications.}
Mathematische Zeitschrift 143 (1975): 289-297.

\bibitem[Che76]{cheng}
Shiu-Yuen Cheng. 
\textit{Eigenfunctions and nodal sets.}
Commentarii Mathematici Helvetici 51 (1976): 43-55.

\bibitem[CC88]{ColboisCdV}
Bruno Colbois and Yves Colin de Verdière. 
\textit{Sur la multiplicité de la première valeur propre d’une surface de Riemann à courbure constante.}
Commentarii Mathematici Helvetici 63 (1988): 194-208.


\bibitem[Col86]{CdV86}
Yves Colin de Verdière. 
\textit{Sur la multiplicité de la première valeur propre non nulle du Laplacien.}
Commentarii Mathematici Helvetici 61.1 (1986): 254-270.


\bibitem[Col87]{CdVENS}
Yves Colin de Verdière. 
\textit{Construction de laplaciens dont une partie finie du spectre est donnée.}
Annales scientifiques de l'\'Ecole normale supérieure 20.4 (1987): 599-615.

\bibitem[Col88]{ColArnold}
Yves Colin de Verdière. 
\textit{Sur une hypothèse de transversalité d’Arnold.}
Commentarii Mathematici Helvetici 63.2 (1988): 184-193.

\bibitem[Cro80]{croke}
Christopher B. Croke.
\textit{Some isoperimetric inequalities and eigenvalue estimates.}
Annales scientifiques de l'École normale supérieure. 13.4 (1980): 419-435.

\bibitem[DD87]{dancis}
Jerome Dancis and Horace C. Davis.
\textit{An interlacing theorem for eigenvalues of self-adjoint operators}.
Linear Algebra and its Applications 88 (1987): 117-122.

\bibitem[Dav89]{davies}
Edward B. Davies. 
\textit{Heat kernels and spectral theory}. Cambridge University Press, 1989.

\bibitem[Dav93]{davies2}
Edward B. Davies. 
\textit{The State of the Art for Heat Kernel Bounds on Negatively Curved Manifolds}. Bulletin of the London Mathematical Society 25.3 (1993): 289-292.

\bibitem[DGM76]{gaveau}
Amédée Debiard, Bernard Gaveau and Edmond Mazet. 
\textit{Théoremes de comparaison en géométrie Riemannienne.} 
Publications of the Research Institute for Mathematical Sciences 12.2 (1976): 391-425.

\bibitem[DW78]{degeorge}
David L. DeGeorge and Nolan R. Wallach. 
\textit{Limit formulas for multiplicities in $L^2(\Gamma\backslash G)$}.
Annals of Mathematics 107.2 (1978): 133-150.

\bibitem[FGPP23]{counter}
Maxime Fortier Bourque, \'Emile Gruda-Mediavilla, Bram Petri and Mathieu Pineault.
\textit{Two counterexamples to a conjecture of Colin de Verdière on multiplicity.}
arXiv:2312.03504  (2023).

\bibitem[FP21]{fortier1}
Maxime Fortier Bourque and Bram Petri. 
\textit{The Klein quartic maximizes the multiplicity of the first positive eigenvalue of the Laplacian.}
arXiv:2111.14699 (2021). To appear in Journal of Differential Geometry.

\bibitem[FP23]{fortier2}
Maxime Fortier Bourque and Bram Petri. \textit{Linear programming bounds for hyperbolic surfaces.}
arXiv:2302.02540 (2023).

\bibitem[Gam02]{gamburd}
Alex Gamburd. 
\textit{On the spectral gap for infinite index “congruence” subgroups of $SL_2(\Z)$.}
Israel Journal of Mathematics 127.1 (2002): 157-200.

\bibitem[GLST21]{lemasson}
Clifford Gilmore, Etienne Le Masson, Tuomas Sahlsten and Joe Thomas. 
\textit{Short geodesic loops and $L^p$ norms of eigenfunctions on large genus random surfaces.}
Geometric and Functional Analysis 31.1 (2021): 62-110.

\bibitem[HSZZ22]{haiman}
Milan Haiman, Carl Schildkraut, Shengtong Zhang and Yufei Zhao. 
\textit{Graphs with high second eigenvalue multiplicity.}
Bulletin of the London Mathematical Society 54.5 (2022): 1630-1652.

\bibitem[HKP16]{hassan}
Asma Hassannezhad, Gerasim Kokarev and Iosif Polterovich. 
\textit{Eigenvalue inequalities on Riemannian manifolds with a lower Ricci curvature bound.}
Journal of Spectral Theory 6.4 (2016): 807-835.

\bibitem[JTYZZ21]{equiangular}
Zilin Jiang, Jonathan Tidor, Yuan Yao, Shengtong Zhang and Yufei Zhao. 
\textit{Equiangular lines with a fixed angle.}
Annals of Mathematics 194.3 (2021): 729-743.

\bibitem[LM10]{lebeaumichel}
Gilles Lebeau and Laurent Michel. 
\textit{Semi-classical analysis of a random walk on a manifold}.
The Annals of Probability 38.1 (2010): 277-315.

\bibitem[LZ21]{lizhang}
Lu Li and Zhen Lei Zhang. 
\textit{On Li-Yau Heat Kernel Estimate}.
Acta Mathematica Sinica, English Series 37 (2021): 1205-1218.

\bibitem[MSS15]{marcus}
Adam W. Marcus, Daniel A. Spielman and Nikhil Srivastava.
\textit{Interlacing families I: Bipartite Ramanujan graphs of all degrees}.
Annals of Mathematics 182.1 (2015): 307-325.

\bibitem[MRS21]{mckenzie}
Theo McKenzie, Peter Michael Reichstein Rasmussen and Nikhil Srivastava. 
\textit{Support of closed walks and second eigenvalue multiplicity of graphs.}
Proceedings of the 53rd Annual ACM SIGACT Symposium on Theory of Computing. 2021.



\bibitem[MP19]{mirzakhanipetri}
Maryam Mirzakhani and Bram Petri. 
\textit{Lengths of closed geodesics on random surfaces of large genus.}
Commentarii Mathematici Helvetici 94.4 (2019): 869-889.

\bibitem[Mon22]{laura}
Laura Monk. 
\textit{Benjamini–Schramm convergence and spectra of random hyperbolic surfaces of high genus}. 
Analysis \& PDE 15.3 (2022): 727-752.

\bibitem[Nad87]{nadirashvili}
Nikolai S. Nadirashvili. 
\textit{Multiple eigenvalues of the Laplace operator.}
Matematicheskii Sbornik 175.2 (1987): 223-237.

\bibitem[OR09]{otalrosas}
Jean-Pierre Otal and Eulalio Rosas. 
\textit{Pour toute surface hyperbolique de genre $g$, $\lambda_{2g-2}> 1/4$.}
Duke Mathematical Journal 150.1 (2009): 101-115.

\bibitem[RS78]{reed}
Michael Reed and Barry Simon.
\textit{Methods of modern mathematical physics. IV. Analysis of operators.}
Academic Press, New York-London, 1978.

\bibitem[RY68]{ringel}
Gerhard Ringel and John WT Youngs. 
\textit{Solution of the Heawood map-coloring problem.}
Proceedings of the National Academy of Sciences 60.2 (1968): 438-445.

\bibitem[SX91]{sarnakxue}
Peter Sarnak and Xiaoxi Xue.
\textit{Bounds for multiplicities of automorphic representations.}
Duke Mathematical Journal 64.1 (1991): 207-227.

\bibitem[SY94]{schoen}
Richard M. Schoen and Shing-Tung Yau. 
\textit{Lectures on differential geometry.}
Vol. 2. Cambridge, MA: International press, 1994.

\bibitem[Sev02]{sevennec}
Bruno Sévennec. 
\textit{Multiplicity of the second Schrödinger eigenvalue on closed surfaces.}
Mathematische Annalen 324.1 (2002): 195-211.

\end{thebibliography}
\end{document}